\newtheorem{theorem}{Theorem}[section]
\newtheorem{lemma}[theorem]{Lemma}
\newtheorem{proposition}[theorem]{Proposition}
\newtheorem{corollary}[theorem]{Corollary}
\newtheorem{definition}[theorem]{Definition}
\theoremstyle{remark}
\newtheorem{remark}[theorem]{Remark}
\def\({\left(}
\def\){\right)}
\def\<{\left\langle}
\def\>{\right\rangle}
\def\si{{\sigma}}
\def\R{{\mathbb R}}% real numbers
\def\N{{\mathbb N}}% nonnegative integers
\def\G{{\mathcal G}}
\def\F{{\mathcal F}}
\def\Tend#1#2{\mathop{\longrightarrow}\limits_{#1\rightarrow#2}}
\numberwithin{equation}{section}
\begin{document}

\title[2D NLS with combined nonlinearities and harmonic potential]{On
  ground states for the 2D Schr\"odinger equation with 
  combined nonlinearities and harmonic potential}

\author[R. Carles]{R\'emi Carles}
\address{Univ Rennes, CNRS\\ IRMAR - UMR 6625\\ F-35000
  Rennes, France}
\email{Remi.Carles@math.cnrs.fr}

\author[Y. Il'yasov]{Yavdat Il'yasov}

\address{Institute of Mathematics\\ Ufa Federal Research Centre, RAS\\
	Chernyshevsky str. 112, 450008 Ufa\\ Russia}
\email{ilyasov02@gmail.com}

\begin{abstract}
  We consider the nonlinear Schr\"odinger equation with a harmonic
  potential in the presence of two combined energy-subcritical power
  nonlinearities. We assume that the larger power is
  defocusing, and the smaller power is focusing.
  Such a framework includes physical models, and
  ensures that finite energy solutions are global in time. We address
  the questions of the existence and the orbital stability of
  the set of  standing
  waves. Given the mathematical features of the equation (external
  potential and inhomogeneous nonlinearity), the set of parameters for
  which standing waves exist in unclear. In the two-dimensional case,
  we adapt the method of fundamental frequency solutions, introduced by the second
  author in the higher dimensional case without potential. This makes
  it possible to describe accurately the set of fundamental frequency
  standing waves and ground states, and to prove its orbital stability.
\end{abstract}
\thanks{The first author was supported by Centre Henri Lebesgue,
  program ANR-11-LABX-0020-0. The second author was supported by the
  Russian Science Foundation (No.22-21-00580).} 
\maketitle

\section{Introduction}\label{sec:intro}
We consider the  nonlinear Schr\"{o}dinger (NLS) equation   with
harmonic potential and  combined power-type nonlinearities  
\begin{equation}\label{Sch}
	i\partial_t\psi+ \Delta \psi =|x|^2\psi -\mu|\psi|^{p-2}\psi+|\psi|^{q-2}\psi, ~~(t,x) \in \mathbb{R}^+\times\mathbb{R}^N,  
\end{equation}
where $\psi$ is a complex-valued function of $(t, x)$, $p, q \in
(2,+\infty)$ ($p\not = q$), $N\ge 1$ and $\mu>0$ (homogeneity
considerations show that the positive coefficient in front of the last
nonlinearity can be chosen equal to one).   
For
\[2<p,q<\frac{2N}{(N-2)_+}=:2^*,\]
the Cauchy problem for \eqref{Sch} with the initial value  $\psi_0 \in
\Sigma$ is locally well-posed and has a unique local solution $\psi
\in C([0, T(\psi_0)),\Sigma) \cap C^1([0, T(\psi_0),\Sigma^*),$ for
some $T(\psi_0)>0$, where $\Sigma$ denotes the domain of the harmonic
oscillator,

\[\Sigma=\{f\in H^1(\R^N),\ x\mapsto
  xf(x)\in L^2(\R^N)\},\]

characterized by the norm
\[
\|u\|_{\Sigma}^2=\int_{\R^N} (1+|x|^2)|u|^2+|\nabla u|^2=\|u\|_{L^2}^2 +
\<\(-\Delta+|x|^2\)u,u\>.
\]
See e.g. \cite{CazCourant}. 
In view of the uncertainty principle (see e.g. \cite{Weinstein83}),
\begin{equation}\label{eq:uncertainty}
  \|u\|_{L^2}^2\le \frac{2}{N}\|xu\|_{L^2}\|\nabla u\|_{L^2},
\end{equation}
we may remove the $L^2$ norm of $u$ from the definition of
$\|u\|_{\Sigma}$.  In physics, the potential in \eqref{Sch} corresponds 
to some confinement (see e.g. \cite{DGPS}), and the nonlinearity is typically cubic-quintic:
the quintic part is defocusing ($q=6$), and the cubic part is focusing
($p=4$ and $\mu>0$); see
e.g. \cite{AbSa05,Malomed19}. The combination of the harmonic
potential and the cubic-quintic nonlinearity can be found e.g. in
\cite{DWWZL11,WZM12}. From a mathematical point of view, the
combination of a nonlinearity which is not homogeneous, and of the
presence of an external potential, requires a specific approach. In
the case where  the external potential is smooth and decaying at
infinity,  the large time behavior of ground states was described
precisely under smallness assumptions, and possibly for specific
nonlinearities and/or potentials; see e.g. \cite{CuccagnaMaeda15,DengSofferYao18,GangZhou07,NaumkinRaphael20,SofferWeinstein04} and references
therein.  The heart of the present paper addresses
the two-dimensional case $N=2$. 
\smallbreak

Solutions to \eqref{Sch} obey the \textit{energy conservation law}:
\[
E \equiv H_\mu(\psi(t)):=\int_{\R^N} \left(\frac{1}{2} |\nabla \psi|^2 +\frac{|x|^2}{2}|\psi|^2 - \frac{\mu}{p}| \psi|^p +\frac{1}{q}| \psi|^q\right)dx,
\]
and the \textit{mass (charge, particle
numbers) conservation law}:
\[
\alpha\equiv Q(\psi(t)):=\frac{1}{2}\int_{\R^N} | \psi|^2 dx. 
\]
The assumption $2<p<q<2^*$ ensures global
  well-posedness in $\Sigma$ for \eqref{Sch}, since the defocusing
  nonlinearity dominates the focusing one (see also
  Lemma~\ref{lemBOUND} below). We shall mainly consider
  this assumption regarding $p$ and $q$, an assumption which includes
  the cubic-quintic nonlinearity in 2D. We mention in passing that on
  the other hand, the Cauchy problem for the 3D cubic-quintic equation
  with harmonic potential is not straightforward, and the cubic
  nonlinearity is energy-critical; see \cite{KiViZh09} for the Cauchy problem
  when $\mu=0$, and \cite{Zhang06} for the 3D cubic-quintic equation without
  harmonic potential. On the other hand, if $2+\frac{4}{N}<p\le 2^*$ and
  $2<q<p$, then finite time blow-up is possible, as shown in
  \cite{TaoVisanZhang} (see also e.g. \cite{CMZ16,Feng2018}),  and the
  issue of stability or instability of solitary waves in that case
  was studied in \cite{FuHa21,Hayashi-p}. 
  \smallbreak
  
We study the existence and stability of standing waves $\psi_\lambda(t,x)=e^{i\lambda t}u(x)$ of \eqref{Sch}, where  the amplitude function $u$    satisfies
\begin{equation}\label{1S}
	-\Delta u +|x|^2u+\lambda u-\mu|u|^{p-2}u+|u|^{q-2}u =0, ~~x \in \mathbb{R}^N.   
\end{equation}
Here $\lambda \in \mathbb{R}$ is the \textit{frequency} of the standing wave. 
\smallbreak

There are at least two motivations to study \eqref{1S},
apart from the fact that \eqref{Sch} appears in some physical models
(e.g. \cite{AbSa05,Malomed19}).  First, due to the fact that the
nonlinearity in \eqref{1S} is not homogeneous, the range of possible
values of the parameters $\lambda$ and $\mu$, in order for \eqref{1S}
to have nontrivial solutions, is not clear. This aspect is explained in
more details below. In particular, we could not find
  any reference addressing the existence of ground state solutions to
  \eqref{1S}.  Second, in the case without
potential, the set of stationary solutions was studied recently in
\cite{ilyasFFS} by using a generalization of the Rayleigh quotient to
the nonlinear framework \cite{Ily17}. However, the approach presented
in \cite{ilyasFFS} excludes the two-dimensional case (as well as the
one-dimensional case, a situation where many specific techniques are
available, see e.g. \cite{BL83a,IlievKirchev93}). As we will see, the
introduction of the harmonic potential removes this restriction. In
principle, Equation~\eqref{1S} can be studied via the nonlinear
generalized Rayleigh quotient method in any space dimension $N$, but
computations are more explicit in the 2D case, this is why we choose
to focus on that case here. 
\smallbreak
The orbital stability of the ground states of many equations can be
often investigated using a Lyapunov functional, determined by the action functional $S_{\lambda,\mu}(u)$ restricted to the manifold of functions $u$ with fixed mass integral $m= Q(u)$.   Such an approach was first applied to Korteweg-de Vries (KdV) solitons by Benjamin \cite{benj}, and later, to three-dimensional ion-acoustic
solitons in strongly magnetized plasma by Zakharov and Kuznetsov
\cite{ZaharKuzn}.   
The general theorem on orbital stability of the so-called (set of)
mass-prescribed solutions of nonlinear problems based on this idea was
proved by T.~Cazenave and P.-L.~Lions \cite{CaLi82}. However,  the
orbital stability of mass-prescribed solutions does not always entail
the orbital stability of ground states of nonlinear Schr\"odinger
equations, typically when the nonlinearity is not homogeneous, or  in
the presence of an external potential. In other words, the notions of
orbital stability of (the set of) mass-prescribed solutions and
orbital stability of (action minimizing) ground states coincide in the
case of homogeneous nonlinearities without potentials, as explained in
\cite{CaLi82} (see also \cite{CazCourant}), but apart from this case
(and the very singular case of a logarithmic nonlinearity, where the
role of the mass is specific, see \cite{Caz83,Ar16}),
the approaches introduced to prove orbital stability differ; see
\cite{Weinstein85,Weinstein86CPAM,GSS87,IlievKirchev93}, and also \cite{BGR15},
in the case of orbital stability of ground states.
\smallbreak

We emphasize that even in the case without potential, the notion of
ground state solutions to \eqref{1S} requires some care. The notion
which seems to be privileged in physics consists in minimizing the
energy under the constraint of a fixed mass $M>0$,
\begin{equation}\label{eq:constrained}
  \inf\{ H_\mu(u);\quad Q(u)= M\}.
\end{equation}
The other approach consists in seeking a minimizer of the action 
\begin{equation*}
S_{\lambda,\mu}(u):=H_\mu(u)+\lambda Q(u) ,
\end{equation*}
for a prescribed frequency $\lambda\in \R$; see \cite{BL83a,BGK83}. In
this article, we consider the following notion (see also
\cite{JeanjeanLu-p} and references therein):
\begin{definition}
  For a given $\lambda \in \mathbb{R}$, a solution $\hat{u}_{\lambda}$
  of \eqref{1S} is said to be a \emph{ground state} if $S_{\lambda,\mu}(\hat{u}_{\lambda})\leq S_{\lambda,\mu}(w)$, for any $w \in\Sigma\setminus \{0\}$ such that $DS_{\lambda,\mu}(w)=0$. For $\mu>0$, we denote by 
\[
\G_{\lambda,\mu}:=\left\{u \in \Sigma\setminus\{0\}: S_{\lambda, \mu}(u)= \hat{S}_{\lambda,\mu},~D S_{\lambda, \mu}(u)=0\right\}
\]
the set of the ground states of \eqref{1S}, where 
\[\hat{S}_{\lambda,\mu}:=\min\left\{S_{\lambda,\mu}(u):~D S_{\lambda,
    \mu}(u)=0,~ u \in \Sigma\setminus\{0\}\right\}\]
 is called the \emph{ground  level}. 
\end{definition}
The relation between the two notions is not clear in general: if $u$ is a minimizer for
\eqref{eq:constrained}, then there exists a Lagrange multiplier so
that $u$ solves \eqref{1S} for some $\lambda\in \R$. In the case of an
homogeneous nonlinearity (typically $-|\psi|^{p-2}\psi$), a scaling
argument shows that any $\lambda>0$ can be obtained, and the solution
is also a minimizer of the action, see \cite{CaLi82} (see also
\cite{CazCourant}). In the case of combined power nonlinearities, that
is \eqref{Sch} without potential, it is known that not every
minimizer of the action is a solution to \eqref{eq:constrained}, see
\cite{CKS-p,LewinRotaNodari20}. It has been proven recently that for a
large class of 
nonlinearities (much larger than the sum of two power nonlinearities),
every solution to \eqref{eq:constrained} minimizes the action; see
\cite{JeanjeanLu-p}. 
\smallbreak

We now discuss more precisely why  it is not
obvious to describe the set of accessible values of $\lambda$ and
$\mu$ to solve \eqref{1S}. Solutions to 
\begin{equation}
  \label{eq:Vgen}
  -\Delta u + V(x) u + \lambda u = f(|u|^2)u,
\end{equation}
were constructed in \cite{PL284b} for potentials $V$ having a finite limit as
$|x|\to \infty$. In\cite{FloerWeinstein1986}, the case $N=1$ with $V$
bounded and 
a cubic nonlinearity ($f(y) = y$) was considered, thanks to
Lyapunov-Schmidt method, allowing to invoke perturbation arguments
near the ground state associated to the case $V=0$. These results were
extended in 
\cite{RoseWeinstein}, again in the case of a bounded potential,
going to zero at infinity, for an homogeneous nonlinearity $f(y) =
y^{\sigma}$. By bifurcation arguments, the authors show that for
$\lambda$ near $\lambda_*>0$ ground state eigenvalue of $\Delta
-V$, \eqref{eq:Vgen} has a solution. The case of an harmonic
potential $V(x)=|x|^2$ was considered in  
\cite{KavianWeissler} for the case of an $L^2$-critical nonlinearity
$f(y)=y^{2/N}$, thanks to a mountain pass lemma.  Additional results
were obtained regarding the 
existence and stability of standing waves for unbounded potentials $V$
in e.g. \cite{Reika2001,FuOh03}. The following result is very similar
to \cite[Theorems~1.3 and 1.4]{KavianWeissler}, and is proven in
Appendix~\ref{sec:constrained}. 
\begin{proposition}\label{prop:constrained}
   Let $N\ge 1$ and $2<p<2^*=\frac{2N}{(N-2)_+}$.\\
$1.$  If $\lambda>-N$, then there exists a solution $u\in
\Sigma\setminus\{0\}$ to
\begin{equation}\label{eq:homodefoc}
  -\Delta u+|x|^2u +\lambda u = |u|^{p-2}u.
\end{equation}
$2.$ If $\lambda<-N$, then there exists a solution $u\in
\Sigma\setminus\{0\}$ to
\begin{equation}\label{eq:homofoc}
  -\Delta u+|x|^2u +\lambda u = -|u|^{p-2}u.
\end{equation}
$3.$ If $\lambda>-N$, then there exists
   $\mu>0$ and a solution $u\in
\Sigma\setminus\{0\}$ to \eqref{1S}. 
\end{proposition}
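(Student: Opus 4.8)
The plan is to obtain all three solutions by the direct method of the calculus of variations, the decisive analytic ingredient being the compactness of the embedding $\Sigma\hookrightarrow L^r(\R^N)$ for every $2\le r<2^*$, which holds because the harmonic oscillator $-\Delta+|x|^2$ has compact resolvent. I would also use throughout that the bottom of the spectrum of $-\Delta+|x|^2$ equals $N$, so that the quadratic form
\[
  Q_\lambda(u):=\int_{\R^N}\(|\nabla u|^2+|x|^2|u|^2+\lambda|u|^2\)\,dx
\]
is positive definite and equivalent to $\|u\|_\Sigma^2$ precisely when $\lambda>-N$; this is exactly the threshold appearing in the statement.

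For Part~1, since $\lambda>-N$ the form $Q_\lambda$ is coercive on $\Sigma$, but the associated action $\tfrac12Q_\lambda-\tfrac1p\|\cdot\|_{L^p}^p$ is unbounded below, so I would instead minimize $Q_\lambda(u)$ under the constraint $\|u\|_{L^p}=1$. A minimizing sequence is bounded in $\Sigma$ and, by the compact embedding, converges strongly in $L^p$, so the constraint passes to the weak $\Sigma$-limit, which realizes the minimum by weak lower semicontinuity of $Q_\lambda$. The Lagrange multiplier equals $c=Q_\lambda(v)>0$, and the homogeneous rescaling $u=c^{1/(p-2)}v$ turns the minimizer $v$ into a genuine solution of \eqref{eq:homodefoc} (choosing $v\ge0$ from the outset yields a nonnegative one).

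Part~3 is the most economical, because $\mu$ is not prescribed but only required to be positive, so it can be read off as a Lagrange multiplier. Keeping $\lambda>-N$, I would minimize $\tfrac12Q_\lambda(u)+\tfrac1q\|u\|_{L^q}^q$ under the same constraint $\|u\|_{L^p}=1$; both terms are nonnegative, and the compact embedding again delivers a nontrivial minimizer $v$. Testing its Euler--Lagrange equation against $v$ identifies the multiplier as $\mu=Q_\lambda(v)+\|v\|_{L^q}^q$, which is strictly positive precisely because $Q_\lambda$ is positive definite; the resulting equation is then exactly \eqref{1S}, so no rescaling is needed.

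The genuinely different case is Part~2, where $\lambda<-N$ makes $Q_\lambda$ indefinite (it is negative on the Gaussian ground state $\phi_0$), so constrained minimization of $Q_\lambda$ is no longer available; instead I would minimize the action $S(u)=\tfrac12Q_\lambda(u)+\tfrac1p\|u\|_{L^p}^p$ directly over $\Sigma$, the positive sign of the nonlinear term $\tfrac1p\|u\|_{L^p}^p$ supplying coercivity. The main obstacle is precisely this coercivity, i.e.\ absorbing the negative quantity $\tfrac\lambda2\|u\|_{L^2}^2$: I would split $\int|u|^2$ over $\{|x|\le R\}$ and $\{|x|>R\}$, bounding the inner part by $\|u\|_{L^p}^2$ through H\"older and the outer part by $R^{-2}\|xu\|_{L^2}^2$, and then fix $R$ large so that the latter is absorbed into $Q_\lambda$; the superquadratic term $\tfrac1p\|u\|_{L^p}^p$ then dominates and renders $S$ bounded below and coercive, so the direct method yields a minimizer. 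Evaluating $S$ on $R\phi_0$ for small $R>0$ shows the infimum is strictly negative, hence the minimizer is nontrivial and, being unconstrained, solves \eqref{eq:homofoc} directly. A variant avoiding the H\"older splitting is to decompose $\Sigma$ into the finite-dimensional span of the eigenfunctions of $-\Delta+|x|^2+\lambda$ with nonpositive eigenvalue and its complement, and to use that $\|\cdot\|_{L^p}^p$ controls the indefinite quadratic part on that finite-dimensional subspace.
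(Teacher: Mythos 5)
Your proposal is correct, and Parts~1 and~3 follow essentially the same route as the paper: constrained minimization of the quadratic form (respectively, of the quadratic form plus the defocusing term $\frac1q\|u\|_{L^q}^q$) on the $L^p$-sphere, compactness of $\Sigma\hookrightarrow L^r(\R^N)$ for $2\le r<2^*$, positivity of the Lagrange multiplier via the spectral bound $\<Hu,u\>\ge N\|u\|_{L^2}^2$, and then a homogeneity rescaling in Part~1 (no rescaling in Part~3, where $\mu$ is simply read off as the multiplier).

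Part~2 is where you genuinely diverge. The paper keeps the same constrained framework: it minimizes $F(u)=\frac12\<Hu,u\>+\frac{\lambda}{2}\|u\|_{L^2}^2$ on the sphere $B_p$, shows the infimum $\delta$ is negative (testing with a Gaussian, as you do), and the delicate step is proving $\delta>-\infty$: assuming $F(u_n)\to-\infty$, all three norms $\|u_n\|_{L^2}$, $\|\nabla u_n\|_{L^2}$, $\|x u_n\|_{L^2}$ must blow up at the same rate, and the normalized sequence $u_n/\|u_n\|_{L^2}$, via compactness, forces $\|u_n\|_{L^p}\to\infty$, contradicting the constraint; the minimizer then has a \emph{negative} multiplier $\nu$, and the rescaling $u=|\nu|^{1/(p-2)}\tilde u$ produces the focusing sign. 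You instead minimize the \emph{unconstrained} action $\frac12 Q_\lambda(u)+\frac1p\|u\|_{L^p}^p$, restoring coercivity by splitting $\|u\|_{L^2}^2$ into $\{|x|\le R\}$ (absorbed into $\frac1p\|u\|_{L^p}^p$ via H\"older and Young, since $p>2$) and $\{|x|>R\}$ (absorbed into $\frac12\|xu\|_{L^2}^2$ for $R$ large), and then nontriviality again comes from the Gaussian. Both arguments are sound and rest on the same compact embedding. What your version buys: the solution is obtained directly as a free critical point, with no multiplier-sign analysis and no rescaling, and the coercivity estimate is elementary and quantitative. What the paper's version buys: a single uniform framework (the same constraint set $B_p$ and the same limiting procedure) for all three cases, with the sign of the multiplier automatically selecting between the defocusing and focusing equations. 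One small point to tighten in your write-up: in the direct method you should note explicitly that weak lower semicontinuity of your action uses the compactness of $\Sigma\hookrightarrow L^2\cap L^p$ for the two terms that enter with unfavorable sign or nonlinear power; this is immediate but it is the step where $\Sigma$ (rather than $H^1$) is essential.
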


Apart from the existence of standing wave, and their classification,
the standard question is their dynamical stability: due to the
invariances of the equation, the relevant notion is the notion of
orbital stability, see e.g. \cite{CazCourant,BGR15}. In the present
case, the  harmonic 
potential removes the ``usual'' space-translation invariance, and,
along with time-translation invariance, only
the gauge invariance remains: if $\psi$ solves \eqref{Sch}, then so
does $\psi e^{i\theta}$, for any constant $\theta\in \R$. Like for the
existence of standing waves, at least two approaches are commonly used
in order to study the question of orbital stability: the stability of
a set of standing waves (typically, the minimizers associated to
\eqref{eq:constrained}, as in \cite{CaLi82}), and the stability of a
given standing wave (as in
\cite{Weinstein85,Weinstein86CPAM,GSS87,IlievKirchev93}; 
see also \cite{BGR15}).  
Like in \cite{ilyasFFS}, our main result is in the
spirit of the former notion: we prove the orbital stability of the set
of fundamental frequency solutions, defined below. Note that by
resuming the approach of e.g. \cite{CaSp21} in the 2D case (the 3D
case is different, as the quintic nonlinearity becomes
energy-critical), based on the method introduced in \cite{CaLi82}, it
is easy to prove that the set of energy minimizers 
under a mass constraint (the minimizers of \eqref{eq:constrained}) is
orbitally stable. But then again, we do not know for which values of
$\mu$ this set is nonempty, and which are the corresponding Lagrange
multipliers $\lambda$.  One of the main features of this
article is to define quantities that make it possible to characterize the
set of $\mu$'s and $\lambda$'s for which ground states exist, and
their set is stable.

\smallbreak

Let us state our main result. Introduce,
\begin{equation}\label{eq:muo}
	\hat{\mu}^0=C_{p,q}\inf_{u \in \Sigma\setminus \{0\}}
\frac{\(\int|\nabla u|^{2}\cdot \int |x|^2 |u|^{2}\)^{\frac{q-p}{2(q-2)}}(\int
  |u|^{q})^\frac{p-2}{(q-2)}}{ \int
  |u|^{p}}.
\end{equation}
for some $C_{p,q}>0$ introduced in Section~\ref{sec:rayleigh}: $\hat{\mu}^0>0$ (see Proposition~\ref{prop:posit.bound} below). 
\begin{theorem}\label{theo:main}
 Let $N=2$, $2<p<q<\infty$, and $\mu>\hat{\mu}^0$. Then there exists $ \hat{\lambda}_\mu^{*}>0$ such that: 
\par $(1^o)$ For any $\lambda\in [0,
\hat{\lambda}_\mu^{*}]$,  Equation~\eqref{1S} has a ground state $\hat{u}_\lambda \in \Sigma$.
\par $(2^o)$ $S_{\lambda,\mu}(\hat{u}_\lambda)<0$ for $\lambda\in [0,
\hat{\lambda}_\mu^{*})$ and $S_{\lambda,\mu}(\hat{u}_\lambda)|_{\lambda=\hat{\lambda}_\mu^{*}}=0$. 
\par $(3^o)$ Up to a gauge transform (replace $\hat{u}_\lambda$ by
$e^{i\theta}\hat{u}_\lambda$ for some constant $\theta\in \R$), $\hat{u}_\lambda$ is positive, radially symmetric, and non-increasing in $r=|x|$.
\par $(4^o)$ For any $\lambda\in [0,
\hat{\lambda}_\mu^{*}]$, the set of ground states $\G_{\lambda,\mu}$ of \eqref{1S} is
 orbitally stable: for any $\varepsilon > 0$, there exists $\delta> 0$ such that if 
$\psi_0\in \Sigma$, satisfies
\[\inf_{\phi \in \G_{\lambda,\mu}}\|\psi_0
  -\phi\|_{\Sigma}< \delta,\]
then
\[	\sup_{t\in \R}\inf_{\phi \in \G_{\lambda,\mu}}\|\psi(t)-\phi\|_{\Sigma}<\varepsilon,
\]
where $\psi$ is the solution to \eqref{Sch} such that $\psi_{\mid
  t=0}=\psi_0$. 
\end{theorem}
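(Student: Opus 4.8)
The plan is to realize the ground states $\G_{\lambda,\mu}$ through a nonlinear generalized Rayleigh quotient (fibering) analysis of the action, exploiting the two scaling symmetries that are available in dimension two, and then to deduce orbital stability from the conservation laws by a compactness argument. Throughout I write $S_{\lambda,\mu}(u)=H_\mu(u)+\lambda Q(u)$, and I record at the outset the key fact for $(4^o)$: since \emph{both} $H_\mu$ and $Q$ are conserved along \eqref{Sch}, the action $S_{\lambda,\mu}(\psi(t))=E+\lambda\alpha$ is itself a conserved quantity. For the variational part I consider, for $u\in\Sigma\setminus\{0\}$, the fiber map $j_u(t):=S_{\lambda,\mu}(tu)=\tfrac{t^2}{2}A_\lambda(u)-\tfrac{\mu t^p}{p}\int|u|^p+\tfrac{t^q}{q}\int|u|^q$, where $A_\lambda(u)=\int(|\nabla u|^2+|x|^2|u|^2+\lambda|u|^2)$. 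Its interior critical points solve $A_\lambda(u)=\mu t^{p-2}\int|u|^p-t^{q-2}\int|u|^q$, and since $2<p<q$ and $A_\lambda(u)>0$ for $\lambda\ge0$, the map $j_u$ first increases, and for $\mu$ large enough develops a local maximum at $t_-(u)$ and a local minimum at $t_+(u)>t_-(u)$; the corresponding Nehari components $\mathcal{N}^{\pm}_{\lambda,\mu}$ carry the candidate ground states. The decisive two-dimensional feature is that under the dilation $u\mapsto u(s\cdot)$ the Dirichlet energy $\int|\nabla u|^2$ is invariant while $\int|x|^2|u|^2$ and $\int|u|^r$ scale as $s^{-4}$ and $s^{-2}$; combined with amplitude scaling, this makes the quotient in \eqref{eq:muo} fully scale invariant, and it is exactly the condition $\mu>\hat{\mu}^0$ that forces the valley $j_u(t_+(u))$ below zero for the minimizing profile at $\lambda=0$.

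I would then define $\hat{\lambda}^{*}_\mu$ as the frequency at which the ground level vanishes, and prove existence $(1^o)$ by the direct method on the relevant Nehari component. Boundedness of minimizing sequences in $\Sigma$ follows from the defocusing-dominant structure (Lemma~\ref{lemBOUND}) together with the normalization $S_{\lambda,\mu}\le 0$; compactness comes from the fact that $-\Delta+|x|^2$ has compact resolvent, so that $\Sigma\hookrightarrow L^r$ is compact for every $2\le r<\infty$ in 2D. This is precisely where the harmonic potential is decisive: it restores the compactness that translation invariance destroys in the potential-free case, so no concentration-compactness is needed. Weak lower semicontinuity of the quadratic part yields a minimizer, and the fiber structure (a vanishing Lagrange multiplier along the Nehari constraint) shows it is a genuine critical point of $S_{\lambda,\mu}$ attaining $\hat{S}_{\lambda,\mu}$. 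For $(2^o)$, an envelope argument gives $\tfrac{d}{d\lambda}\hat{S}_{\lambda,\mu}=Q(\hat{u}_\lambda)>0$, so $\hat{S}_{\lambda,\mu}$ is strictly increasing in $\lambda$; combined with $\hat{S}_{0,\mu}<0$ (from $\mu>\hat{\mu}^0$, see Proposition~\ref{prop:posit.bound}) and continuity, there is a unique $\hat{\lambda}^{*}_\mu>0$ at which $\hat{S}_{\lambda,\mu}=0$, with $\hat{S}_{\lambda,\mu}<0$ below it.

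For $(3^o)$ I would symmetrize. One may first replace $u$ by $|u|$, which does not increase $\int|\nabla u|^2$ and leaves the other terms unchanged, killing the phase; then pass to the Schwarz symmetric decreasing rearrangement $u^{*}$: Pólya--Szegő decreases $\int|\nabla u|^2$, the rearrangement inequality decreases $\int|x|^2|u|^2$ because $|x|^2$ is radially increasing, and all $L^r$ norms are preserved. Hence $S_{\lambda,\mu}(u^{*})\le S_{\lambda,\mu}(u)$, so the minimizer can be taken radial and non-increasing, and the strong maximum principle applied to \eqref{1S} upgrades $u\ge0$ to $u>0$.

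The main obstacle is the orbital stability $(4^o)$, where the difficulty flagged in the introduction appears: the flow conserves $H_\mu$ and $Q$ (hence $S_{\lambda,\mu}$) but does \emph{not} keep $\psi(t)$ on the Nehari manifold, so the action-minimizing characterization of $\G_{\lambda,\mu}$ must be reconciled with what the dynamics preserves. I would argue by contradiction: if stability fails there exist $\psi_0^n\to\phi^n\in\G_{\lambda,\mu}$ in $\Sigma$ and times $t_n$ with $v_n:=\psi^n(t_n)$ satisfying $\inf_{\phi\in\G_{\lambda,\mu}}\|v_n-\phi\|_{\Sigma}\ge\varepsilon$. By conservation, $S_{\lambda,\mu}(v_n)\to\hat{S}_{\lambda,\mu}$ and $Q(v_n)\to Q(\phi)$, so $\{v_n\}$ is an almost-minimizing sequence at the correct mass. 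Projecting $v_n$ onto the Nehari component through $\tau_n v_n$ with $j_{v_n}'(\tau_n)=0$, one shows $\tau_n\to1$ and $S_{\lambda,\mu}(\tau_n v_n)\ge\hat{S}_{\lambda,\mu}$, so $\{\tau_n v_n\}$ is a bona fide minimizing sequence; the boundedness and the compact embedding $\Sigma\hookrightarrow L^r$ then yield, up to a subsequence and a gauge factor $e^{i\theta_n}$, strong $\Sigma$-convergence to a ground state, contradicting the $\varepsilon$-separation. The delicate point—and the crux of the whole argument—is making this bridge quantitative: controlling the fiber projection near the minimizer, using the non-degeneracy of $j_u$ at $t_+$ and the sign $\hat{S}_{\lambda,\mu}\le0$ to upgrade ``$S_{\lambda,\mu}$ and $Q$ near-optimal'' into ``$\Sigma$-close to $\G_{\lambda,\mu}$ modulo gauge,'' which is exactly the step that does not reduce to the classical Cazenave--Lions mass-constrained scheme.
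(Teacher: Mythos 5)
Your route is genuinely different from the paper's: you fix the frequency $\lambda$ and run an amplitude-fibering (Nehari) analysis of $S_{\lambda,\mu}$, whereas the paper fixes the action $S$ and fibers by spatial dilations $u_\sigma(x)=u(x/\sigma)$, turning the problem into maximization of the dilation-optimized quotient $\lambda^S_\mu$ of \eqref{LCN}; ground states are then produced by inverting the monotone map $S\mapsto\hat{\lambda}^S_\mu$ (Proposition~\ref{PMon}, Lemmas~\ref{FFS=GS} and \ref{lem:limit}, Corollary~\ref{EXISTGrS<0}). Your reading of the threshold is correct ($\mu>\hat{\mu}^0$ is equivalent to the existence of some $u$ with $S_{0,\mu}(u)<0$), the compact embedding $\Sigma\hookrightarrow L^r(\R^2)$ plays the same role as in the paper, and your symmetrization for $(3^o)$ matches Corollary~\ref{corINS} (with the caveat that the decrease of $\int|x|^2|u|^2$ under rearrangement is not the textbook Hardy--Littlewood inequality, since the weight is radially \emph{increasing}; it requires the result of \cite{BBJV17} cited there). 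A structural advantage of the paper's formulation, worth noting, is that its variational characterization of the solutions uses only the conserved quantity $\Lambda^S_\mu=(S-H_\mu)/Q$ maximized over a set that is vacuous when $S\le 0$, so the flow automatically preserves near-optimality and no projection onto a constraint manifold is ever needed (Lemma~\ref{lemOrbStabFFS}); stability of $\G_{\lambda,\mu}$ then follows from the identification $\G_{\lambda,\mu}=\F^{S_\lambda}_\mu$ (Lemmas~\ref{FFS=GS} and \ref{lemGr}).

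There are, however, genuine gaps in your sketch. First, the envelope identity $\frac{d}{d\lambda}\hat{S}_{\lambda,\mu}=Q(\hat{u}_\lambda)$ is not justified: $\hat{S}_{\lambda,\mu}$ is the minimum of a $\lambda$-dependent functional over the $\lambda$-dependent set of critical points, so Danskin-type differentiation does not apply; the correct statement is a two-sided comparison obtained by projecting a ground state at one frequency onto the Nehari set at the other (the analogue of \eqref{Nerav}), and keeping the projection on the correct component uses $\hat{S}_{\lambda,\mu}\le 0$. Second, and more seriously, monotonicity and continuity together with $\hat{S}_{0,\mu}<0$ do \emph{not} produce $\hat{\lambda}^{*}_\mu$: an increasing negative function need not vanish at any finite $\lambda$. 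You must prove that the ground level is nonnegative for large $\lambda$ (this part can be rescued: interpolation plus Young gives $\frac{\mu}{p}A(u)\le\frac{1}{2q}B(u)+C(\mu)Q(u)$, hence $S_{\lambda,\mu}\ge 0$ on all of $\Sigma$ once $\lambda\ge C(\mu)$), and---the real sticking point---that a ground state exists \emph{at} the endpoint $\lambda=\hat{\lambda}^{*}_\mu$ with action exactly $0$, as $(1^o)$--$(2^o)$ assert on the closed interval. At that endpoint your Nehari infimum equals $0$, which is also the value of $S_{\lambda,\mu}$ at the origin, so minimizing sequences may converge weakly to $0$ and the direct method collapses; this degenerate case is precisely what the paper's $S$-parametrization avoids, by solving the $S=0$ maximization problem directly, which is well posed whenever $\mu>\hat{\mu}^0$ (Lemmas~\ref{lem:finite} and \ref{lem11}). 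Third, in $(4^o)$ you correctly identify the crux---proving $\tau_n\to 1$ for the fiber projection of $\psi_n(t_n)$---but leave it open. It can be closed: along the strongly convergent projected sequence the limiting fiber has its local maximum value strictly positive while its local minimum value equals $\hat{S}_{\lambda,\mu}\le 0$, so the fiber is non-degenerate and the two near-equal values $j_{v_n}(\tau_n)\le j_{v_n}(1)$ force $\tau_n\to1$; but as written, the stability proof is incomplete exactly at its decisive step.
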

The proof of the theorem is based on development of the method of fundamental frequency solutions, introduced by the second   author in \cite{ilyasFFS} in the higher dimensional case without potential. We deal with the so-called \textit{prescribed action solution} of
\eqref{1S}, i.e., a function $u^S \in \Sigma$  which for a given
action $S \in \R$  satisfies
\begin{equation}\label{EqS}
	S_{\lambda,\mu}(u^S)=S ~~\mbox{and}~~ DS_{\lambda,\mu}(u^S)=0,
\end{equation}
 with  some $\lambda\ge 0$. 
As mentioned above, an alternative formulation which has also been actively investigated
over the last decades consists in finding the solution $u$ to
\eqref{1S} having \textit{ prescribed mass}, via
\eqref{eq:constrained}, while $\lambda$ and $S$ are unknown. Note that
the frequency $\lambda$ can be also considered as a value of the
following  conserved quantity 
\begin{equation}\label{conservL}
	\lambda = \Lambda^S_\mu(\psi(t)):=\frac{S-H_\mu(\psi)}{Q(\psi)}.
\end{equation}

\begin{definition}\label{def:FFsol}
  For a given $S \in \mathbb{R}$, a solution $\hat{u}^S$
  of \eqref{1S} is said to be a fundamental frequency solution (respectively,
$e^{i\hat{\lambda} t}\hat{u}$ is said to be a \textit{fundamental
  frequency standing wave} of \eqref{Sch}) with a \textit{fundamental
  frequency} $\hat{\lambda}^S_\mu$ if  
\[
\hat{\lambda}^S_\mu=\Lambda^S_\mu(\hat{u})\ge \Lambda^S_\mu(w)
\] 
for any  $w \in \Sigma\setminus \{0\}$  such that $D\Lambda^S_\mu(w)=0$. 	For $S \in \mathbb{R}$, $\mu>0$, we denote by 
\[
\F_{\mu}^S:=\left\{u \in \Sigma\setminus\{0\}: \Lambda^S_\mu(u)= \hat{\lambda}^S_\mu,~D  \Lambda^S_\mu(u)=0\right\}
\]
the set of the fundamental frequency solutions of \eqref{1S}, where 
\[\hat{\lambda}^S_\mu:=\max
  \left\{\Lambda^S_\mu(u):~D \Lambda^S_\mu(u)=0,~ u \in \Sigma\setminus\{0\}\right\}\]
 is called the \emph{fundamental  frequency}. 
\end{definition}

We have the following result on the existence and orbital stability of
the set of   fundamental
  frequency solutions:
\begin{theorem}\label{theo:main2}
 Let $N=2$ and $2<p<q<\infty$. Then for any given action value $S \in \R$, there exists an extremal value
$\hat{\mu}^S\ge 0$ such that: 
\par $(1^o)$ For any $\mu > \hat{\mu}^S$,  Equation~\eqref{1S} has a
fundamental frequency solution  $\hat{u}^S_\mu \in \Sigma \setminus
\{0\}$. In the case $S\le 0$, the same is true under the relaxed
assumption $\mu \ge \hat{\mu}^S$.
\par $(2^o)$  Up to a gauge transform (replace $\hat{u}^S_\mu$ by
$e^{i\theta}\hat{u}^S_\mu$ for some constant $\theta\in \R$),  $\hat{u}^S_\mu$ is positive, radially symmetric, and non-increasing in $r=|x|$.
\par $(3^o)$ If $S\leq 0$, then $\hat{\mu}^S> 0$, and for any $\mu \geq  \hat{\mu}^S$, the fundamental frequency solution  $\hat{u}^S_\mu$ is a ground state of \eqref{1S} with $\lambda= \Lambda^S_\mu(\hat{u}^S_\mu)$. Moreover,  $\lambda\in [0,
\hat{\lambda}_\mu^{*})$. 
\par $(4^o)$ If $S\leq 0$, then for any  $\mu \geq  \hat{\mu}^S$, the set of fundamental frequency solutions $\F_{\mu}^S$ of \eqref{1S} is
 orbitally stable: for any $\varepsilon > 0$, there exists $\delta> 0$ such that if 
$\psi_0\in \Sigma$, satisfies
\[\inf_{\phi \in \F_{\mu}^S}\|\psi_0
  -\phi\|_{\Sigma}< \delta,\]
then
\[	\sup_{t\in \R}\inf_{\phi \in \F_{\mu}^S}\|\psi(t)-\phi\|_{\Sigma}<\varepsilon,
\]
where $\psi$ is the solution to \eqref{Sch} such that $\psi_{\mid
  t=0}=\psi_0$. 
\end{theorem}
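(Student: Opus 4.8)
The plan is to realize $\hat{\lambda}^S_\mu$ as the extremal value of the nonlinear generalized Rayleigh quotient $\Lambda^S_\mu$, following the fundamental frequency method of \cite{ilyasFFS}, and to exploit that the harmonic confinement makes the embedding $\Sigma\hookrightarrow L^r$ compact for every $r\in[2,\infty)$ when $N=2$ (here the absence of an upper critical exponent is what permits arbitrary $q$). First I would record the variational identity that a nonzero $u\in\Sigma$ satisfies $D\Lambda^S_\mu(u)=0$ if and only if it solves \eqref{1S} with $\lambda=\Lambda^S_\mu(u)$, and that then automatically $S_{\lambda,\mu}(u)=S$; this is the algebraic dual of $S_{\lambda,\mu}=H_\mu+\lambda Q$ and justifies calling $u$ a prescribed action solution. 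To turn the constrained maximization into a well-posed problem I would analyze the fibering maps $t\mapsto\Lambda^S_\mu(tu)$, written explicitly through $\int(|\nabla u|^2+|x|^2|u|^2)$, $\int|u|^p$, $\int|u|^q$, $\int|u|^2$: their behavior as $t\to 0^+$ (governed by the sign of $S$) and as $t\to\infty$ (where the defocusing $q$-term dominates) selects a distinguished fiber-critical scaling, and the threshold $\hat{\mu}^S$ is precisely the value of $\mu$ below which this scaling fails to produce an admissible positive critical value. Its positivity for $S\le 0$, asserted in $(3^o)$, should follow from the lower bound defining $\hat{\mu}^0$ in \eqref{eq:muo} together with Proposition~\ref{prop:posit.bound}.

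For existence and the qualitative properties $(1^o)$--$(2^o)$ I would take a maximizing sequence $(u_n)$ for $\hat{\lambda}^S_\mu=\sup\{\Lambda^S_\mu(u):D\Lambda^S_\mu(u)=0\}$, equivalently over the Nehari-type constraint produced by the fiber analysis. Replacing each $u_n$ by its Schwarz symmetric-decreasing rearrangement only decreases $\int|\nabla u_n|^2$ (P\'olya--Szeg\H{o}) and $\int|x|^2|u_n|^2$ (since $|x|^2$ is radial increasing) while preserving every $L^r$ norm, hence does not decrease $\Lambda^S_\mu$; so I may assume $(u_n)$ radial, nonnegative and nonincreasing. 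The a priori bound of Lemma~\ref{lemBOUND} keeps $(u_n)$ bounded in $\Sigma$, and the compact embedding lets me pass to a subsequence converging strongly in every $L^r$. The limit $\hat{u}^S_\mu$ is a maximizer; its nontriviality is exactly where $\mu>\hat{\mu}^S$, respectively $\mu\ge\hat{\mu}^S$ when $S\le 0$, is used, since at or below the threshold the fiber value degenerates to $0$. Elliptic regularity upgrades the weak solution to a classical one, and the strong maximum principle together with the rearrangement yields positivity, radial symmetry and monotonicity.

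For the ground state property $(3^o)$ the point is to compare the two extremal principles. For the prescribed action solution with $\lambda=\Lambda^S_\mu(\hat{u}^S_\mu)$ one has $S_{\lambda,\mu}(\hat{u}^S_\mu)=S$, so I must show that $S_{\lambda,\mu}(\hat{u}^S_\mu)\le S_{\lambda,\mu}(w)$ for every critical point $w$ of $S_{\lambda,\mu}$ at this same $\lambda$, i.e. $\hat{u}^S_\mu\in\G_{\lambda,\mu}$. The natural route is to prove that $S\mapsto\hat{\lambda}^S_\mu$ and the ground-level map $\lambda\mapsto\hat{S}_{\lambda,\mu}$ are monotone and mutually inverse on the relevant range, so that maximizing the frequency at fixed action is dual to minimizing the action at fixed frequency; the restriction $S\le 0$ is what forces $\lambda\ge 0$, and the identification $\hat{\lambda}^0_\mu=\hat{\lambda}_\mu^*$, to be established, then places $\lambda$ in $[0,\hat{\lambda}_\mu^*)$ and connects this result to Theorem~\ref{theo:main}. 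I expect this duality step to be the main obstacle, because it requires differentiating the extremal value in the parameter, ruling out a competitor of strictly smaller action sharing the frequency $\lambda$, and handling the boundary case $\mu=\hat{\mu}^S$ where compactness of the maximizing sequence is most delicate.

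Finally, for orbital stability $(4^o)$ I would argue by contradiction in the Cazenave--Lions spirit \cite{CaLi82}, using that finite-energy solutions of \eqref{Sch} are global by Lemma~\ref{lemBOUND} and that $H_\mu$ and $Q$, hence $\Lambda^S_\mu=\lambda$ and $S_{\lambda,\mu}=S$, are conserved along the flow. If stability failed there would be data $\psi_0^n$ with $\inf_{\phi\in\F_\mu^S}\|\psi_0^n-\phi\|_\Sigma\to 0$ but $\inf_{\phi\in\F_\mu^S}\|\psi^n(t_n)-\phi\|_\Sigma\ge\varepsilon$ at some times $t_n$; by continuity of $H_\mu$ and $Q$ the sequence $\psi^n(t_n)$ would be an almost-maximizing sequence for $\Lambda^S_\mu$ subject to the conserved action, and the compactness argument of the existence step would force a subsequence to converge in $\Sigma$ to an element of $\F_\mu^S$, contradicting the lower bound. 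The only structural input beyond existence is the compactness of such constrained maximizing sequences, which the confining potential again supplies.
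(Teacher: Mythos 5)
Your overall architecture is the paper's: a Rayleigh-quotient maximization for $\Lambda^S_\mu$, compactness from the confining potential, a duality/monotonicity argument for the ground-state property, and a contradiction argument with conservation laws for stability (your paragraph on $(4^o)$ is essentially Lemma~\ref{lemOrbStabFFS}). But the execution has a structural flaw: you fiber by scalar multiplication $t\mapsto \Lambda^S_\mu(tu)$, whereas the whole 2D method lives on the spatial dilations $u_\sigma(x)=u(x/\sigma)$, which for $N=2$ leave $T$ invariant and scale $L$ by $\sigma^4$ and $Q,A,B$ by $\sigma^2$. This is not cosmetic. For $S>0$ your fiber map satisfies $\Lambda^S_\mu(tu)\sim S/(t^2Q(u))\to+\infty$ as $t\to0^+$, so the supremum along every fiber is $+\infty$ and no ``distinguished fiber-critical scaling'' exists, while the theorem is stated for all $S\in\R$; even for $S\le0$ the fiber-critical equation mixes the powers $t^{-2}$, $t^{p-2}$, $t^{q-2}$ and has no closed form, so you never obtain the reduced quotient $\lambda^S_\mu$ of \eqref{LCN}, whose key term $\left(T(u)-2S\right)^{1/2}L(u)^{1/2}$ encodes the interaction of the kinetic and potential energies under dilation, nor the second quotient $\mu^S(u)$ whose infimum defines $\hat{\mu}^S$ in \eqref{eq:hat-mu}. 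Consequently your appeal to Proposition~\ref{prop:posit.bound} for the positivity of $\hat{\mu}^S$ is circular: that proposition concerns the dilation-based quotient you never constructed.

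Two further gaps. First, you invoke Lemma~\ref{lemBOUND} to bound the maximizing sequence, but that lemma bounds solutions of the evolution equation \eqref{Sch} in $\Sigma$; it says nothing about maximizing sequences of a variational problem. Since $\lambda^S_\mu$ is invariant under dilations, maximizing sequences are never automatically bounded, and the heart of the paper's existence proof (Lemma~\ref{lem11}) is precisely a normalization argument exploiting this invariance together with \eqref{HSineq} and \eqref{eq:IMRN} to exclude $\|u_n\|_{L^2}\to0$, $\|u_n\|_{L^2}\to\infty$, and $L(u_n)\to\infty$; your proposal skips this entirely. (Also, symmetrizing the maximizing sequence can violate the constraint $T(u)\ge 2S$ when $S>0$, since rearrangement decreases $T$; the paper instead symmetrizes the maximizer, Corollary~\ref{corINS}.) Second, for $(3^o)$ and for the relaxed hypothesis $\mu\ge\hat{\mu}^S$ in $(1^o)$, you correctly identify the duality between prescribed-action maximization and prescribed-frequency minimization, but you explicitly leave it as ``the main obstacle'' rather than proving it. The paper resolves it through the monotonicity and continuity of $S\mapsto\hat{\lambda}^S_\mu$ (Proposition~\ref{PMon}, Corollary~\ref{corContin}), the competitor-exclusion argument of Lemma~\ref{FFS=GS}, and — crucially for the borderline case $\mu=\hat{\mu}^S$ — not a limiting compactness argument at all, but a separate minimization of $\mu^S(u)$ (Lemmas~\ref{lem:monot} and \ref{lem:CritM}) whose minimizer is a fundamental frequency solution with $\lambda=0$. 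These components carry most of the content of statements $(1^o)$ and $(3^o)$ and are absent from your proposal.
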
	
 We obtain the existence of  the fundamental frequency solutions by applying  the nonlinear generalized Rayleigh quotient method (NG-Rayleigh quotient method) \cite{Ily17, ilyasFFS, Ilyas21}  to
\eqref{1S} with $\Lambda^S_\mu(u)$ as the Rayleigh quotient. 
 \bigbreak
\begin{remark}
	We conjecture that Equation~\eqref{1S} has no ground state solution with positive action $S>0$. Furthermore, we anticipate the fundamental frequency solutions  $\hat{u}^S_\mu$ of Equation~\eqref{1S} for $S>0$ are not orbitally stable. 
\end{remark}
\subsection*{Notations}

Let $(\alpha_n)_{n\in\N}$ and
  $(\beta_n)_{n\in\N}$ be two families of positive real numbers. 
\begin{itemize}
\item We write $\alpha_n \ll \beta_n$ if
$\displaystyle \limsup_{n\to +\infty}\alpha_n/\beta_n =0$.
\item We write $\alpha_n \lesssim \beta_n$ if 
$\displaystyle \limsup_{n\to +\infty}\alpha_n/\beta_n <\infty$.
\item We write $\alpha_n \approx \beta_n$ if $\alpha_n \lesssim
  \beta_n$ and $\beta_n \lesssim \alpha_n$. 
\end{itemize}

 %%%%%%%%%%%%%%%%%%%%%%%%%%%%%%%%%%%%%%%%%%%%%%%%%%

\section{The nonlinear generalized Rayleigh quotients }\label{sec:rayleigh}

For $u \in \Sigma$ and $\sigma>0$, we denote $u_\sigma(x):=u(x/\sigma)$, $x \in \mathbb{R}^N$,   and 
\begin{align*}
 &T(u):=\int|\nabla u|^{2},~L(u):=\int |x|^2 |u|^{2},~~Q(u):=\frac{1}{2}\int
  |u|^{2},\\
  &A(u):= \int |u|^{p},~~B(u):=\int |u|^{q}.
\end{align*}
With these notations we have
\[
S_{\lambda,\mu}(u):=\frac{1}{2}T(u)+\frac{1}{2}L(u)+\lambda
Q(u) -\mu \frac{1}{p} A(u) +\frac{1}{q}B(u). 
\]
and
\begin{align*}
	&T(u_\sigma)=\sigma^{N-2}T(u), ~ L(u_\sigma)=\sigma^{N+2}L(u),~Q(u_\sigma)=\sigma^{N}Q(u),\\&~
	A(u_\sigma)=\sigma^{N}A(u),~B(u_\sigma)=\sigma^{N}B(u).
\end{align*}
By a weak solution of \eqref{1S} we mean a critical point $u \in
\Sigma$ of $S_{\lambda,\mu}(u)$ on $\Sigma$. Observe that if $u \in
\Sigma$ is a weak solution of \eqref{1S}, then the Br\'ezis-Kato Theorem
\cite{BK} and the $L^\gamma$ estimates for the elliptic problems
\cite{ADN} yield  $u \in W^{2,\gamma}_{\rm loc}(\R^N)$, for
any $\gamma \in (1,\infty)$, and whence by the regularity theory of
the solutions of the elliptic problems $u \in C^{2}(\R^N)\cap H^1(\R^N)$
(see e.g. \cite{ADN}).

This implies that  any weak solution $u$
of \eqref{1S} satisfies the Pohozaev identity \cite{poh} 
\begin{equation}\label{Poh}
P(u):=\frac{N-2}{2}T(u)+\frac{N+2}{2}L(u)+\lambda
N Q(u) -\mu \frac{N}{p} A(u) +\frac{N}{q}B(u)=0.
\end{equation}
See e.g. \cite[Section~2.1]{BL83a} for details (the assumption $u\in
\Sigma$ makes it possible to adapt the proof in order to include the
harmonic potential, by repeating the argument on top of p.~321 in
\cite{BL83a}). 
For $S\in \R$, introduce the so-called \textit{action-level Rayleigh quotient}
\begin{equation}\label{eq:alRq}
	\Lambda^S_{\mu}(u):= \frac{S-\frac{1}{2}T(u)-\frac{1}{2}L(u)+\mu \frac{1}{p}A(u) -\frac{1}{q}B(u)}{Q(u)}.
\end{equation}

Notice that for any $S \in \R$ and $\lambda \in \R$,
\begin{itemize}
\item $\Lambda^S_\mu(u)=\lambda~\Leftrightarrow~ S_{\lambda,\mu}(u)=S$,
\item $D\Lambda^S_\mu(u)=0$ with $\Lambda^S_\mu(u)=\lambda
  ~\Leftrightarrow~DS_{\lambda,\mu}(u)=0$, i.e., $u$ satisfies \eqref{1S}. 
\end{itemize}
Let $u \in \Sigma\setminus \{0\}$, $S\in \R$, $\sigma>0$, consider
\begin{equation}\label{sigmaFun}
		\lambda_u(\sigma):=\Lambda^S_{\mu}(u_\sigma)=\frac{\sigma^{-N}S-\sigma^{-2}\frac{1}{2}T(u)-\sigma^{2}\frac{1}{2}L(u)+\mu \frac{1}{p}A(u) -\frac{1}{q}B(u)}{Q(u)}.
	\end{equation}
Then for $u \in \Sigma$,
\[
\frac{d}{d\sigma} \lambda_u(\sigma)=0 ~\Leftrightarrow~ -NS+\sigma^{N-2}T(u)-\sigma^{N+2}L(u)=0.
\]
The last equation, 
\begin{itemize}
	\item For $N>2$, has a unique solution $\sigma_S(u)>0$ for any
          $u \in \Sigma$ and $S<0$. There are two positive solutions
          if $S>0$ is not too large.
	\item For $N=2$, has a unique solution $\sigma_S(u)>0$ for any
          $u \in \Sigma$ and $S\leq \frac{1}{2}\ T(u)$.
        \item For $N=1$, has a unique solution $\sigma_S(u)>0$ under
          suitable relations between $T(u)$, $L(u)$ and $S$. 
\end{itemize}

Note that for $N\not =2$, the formula(s) for $\sigma_S(u)$ may not be
easy to write, when possible. 
\smallbreak

\emph{From now on, we assume $N=2$.}
\smallbreak

 Then 
\[
\sigma_S(u)=\(\frac{T(u)-2S}{L(u)}\)^{1/4}, ~~\forall u\in
\Sigma\setminus\{0\},~~\text{such that}~~S\leq \frac{1}{2}\ T(u).
\]
Hence, in accordance with \cite{ilyasFFS}, we have the following
nonlinear generalized Rayleigh quotient (NG-Rayleigh quotient)
\begin{equation}\label{LCN}
  %\begin{aligned}
   	\lambda^S_\mu(u ):=\Lambda^S_\mu(u_\sigma
        )|_{\sigma=\sigma_S(u)}=-\frac{\left((T(u)-2S)^{1/2}L(u)^{1/2}-\mu
          \frac{1}{p}A(u)+\frac{1}{q}B(u)\right)}{Q(u)}.
 %\end{aligned}
\end{equation}
Observe, when $T(u)> 2S$,
\begin{equation*}
	\lambda^S_\mu(u )=\max_{\sigma>0}\Lambda^S_\mu(u_\sigma
        ), 
\end{equation*}
when $T(u)=2S$,
\begin{equation*}
	\lambda^S_\mu(u )=\sup_{\sigma>0}\Lambda^S_\mu(u_\sigma
        )= \frac{\frac{\mu}{p}A(u) -\frac{1}{q}B(u)}{Q(u)}, 
\end{equation*}
and when $T(u)<2S$ (a case which may occur only for $S>0$), 
\begin{equation*}
	\sup_{\sigma>0}\Lambda^S_\mu(u_\sigma
        )=\infty.
\end{equation*}
Denote, for  $S\in \mathbb{R}$,
\[
\Sigma_S:=\{u \in \Sigma;\ T(u)>2S\}.
\]
\begin{corollary}\label{cor:diffSigma}
If $u \in \Sigma$ satisfies \eqref{1S} and $S:=S_{\lambda,\mu}(u)$, then $\sigma_S(u)=1$ and $\frac{d}{d\sigma} \lambda_u(\sigma)|_{\sigma=1}=0$. 
\end{corollary}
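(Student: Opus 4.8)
The plan is to reduce both conclusions to the single algebraic identity $T(u)-L(u)=2S$, and to derive that identity by combining the definition of the action with the Pohozaev identity \eqref{Poh} specialized to $N=2$.

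First I would make the scaling computation explicit. Since $Q(u)$ is independent of $\sigma$, differentiating the numerator of $\lambda_u(\sigma)$ in \eqref{sigmaFun} gives, for $N=2$,
\[
\frac{d}{d\sigma}\lambda_u(\sigma)=\frac{1}{Q(u)}\left(-2\sigma^{-3}S+\sigma^{-3}T(u)-\sigma L(u)\right).
\]
Multiplying the bracket by $\sigma^{3}$ recovers the equation $-2S+T(u)-\sigma^{4}L(u)=0$, so $\sigma_S(u)=\bigl((T(u)-2S)/L(u)\bigr)^{1/4}$, and $\sigma_S(u)=1$ is equivalent to $T(u)-2S=L(u)$, i.e. to $T(u)-L(u)=2S$. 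Evaluating the displayed derivative at $\sigma=1$ gives $\frac{1}{Q(u)}\bigl(T(u)-L(u)-2S\bigr)$, which vanishes under the very same identity. Hence both assertions follow once $T(u)-L(u)=2S$ is established.

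Second I would fix $S=S_{\lambda,\mu}(u)$ to be the action of the solution, so that by the equivalence recorded just after \eqref{eq:alRq} one has $\Lambda^S_\mu(u)=\lambda$. Writing out $2S=2S_{\lambda,\mu}(u)$ gives
\[
2S=T(u)+L(u)+2\lambda Q(u)-\frac{2\mu}{p}A(u)+\frac{2}{q}B(u).
\]
Now I would invoke \eqref{Poh}: for $N=2$ the coefficient $\frac{N-2}{2}$ of $T(u)$ vanishes, so after dividing by $2$ the Pohozaev identity reads $L(u)+\lambda Q(u)-\frac{\mu}{p}A(u)+\frac{1}{q}B(u)=0$, equivalently $2\lambda Q(u)-\frac{2\mu}{p}A(u)+\frac{2}{q}B(u)=-2L(u)$. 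Substituting this into the expression for $2S$ cancels the frequency term together with the $A$- and $B$-terms and leaves $2S=T(u)+L(u)-2L(u)=T(u)-L(u)$, which is precisely the identity needed in the first step.

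I do not expect a genuine obstacle: the statement is a short computation once the two exact identities are in hand. The only points demanding care are the correct reading of the hypothesis, namely that $S$ is the action $S_{\lambda,\mu}(u)$ of the solution so that $\Lambda^S_\mu(u)=\lambda$, and the observation that the cancellation works precisely because the kinetic term drops out of the Pohozaev identity when $N=2$ — which is exactly the feature that singles out the two-dimensional case treated here.
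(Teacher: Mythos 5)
Your proof is correct and follows essentially the same route as the paper: both reduce everything to the identity $T(u)-L(u)=2S$, obtained by combining $S=S_{\lambda,\mu}(u)$ with the Pohozaev identity \eqref{Poh}, whose kinetic term vanishes at $N=2$. Your version merely spells out the cancellation and the $\sigma$-derivative more explicitly than the paper's one-line computation.
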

\begin{proof} The equalities $\frac{d}{d\sigma} \lambda_u(\sigma)|_{\sigma=1}=0$ and $\sigma_S(u)=1$ are consequences of Pohozaev identity \eqref{Poh}. Indeed, $S_{\lambda,\mu}(u)-
\frac{1}{2}P(u)=\frac{1}{2}T(u)-\frac{1}{2}L(u)$ and therefore, $T(u)-2S=L(u)$,
i.e., $\sigma_S(u)=1$. The second equality follows similarly. 	
\end{proof}

\begin{lemma} \label{CritR}
Let $S\in\mathbb R$. Then  $D\lambda^S_\mu(u )=0$, $ \lambda^S_\mu(u )=\lambda$,
$\sigma_S(u)=1$  if and only if  $u \in \Sigma$ is a weak solution
of \eqref{1S} with prescribed action $S$. 
\end{lemma}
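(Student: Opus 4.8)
The plan is to reduce the stated equivalence to the two equivalences recorded after \eqref{eq:alRq} (which relate $\Lambda^S_\mu(u)=\lambda$ to $S_{\lambda,\mu}(u)=S$, and critical points of the two functionals), by establishing a single pointwise identity relating the differential of the NG-Rayleigh quotient $\lambda^S_\mu$ to that of the action $S_{\lambda,\mu}$. Throughout I work with $u\in\Sigma\setminus\{0\}$, so that $Q(u)>0$ and $L(u)>0$, and I record at the outset that the constraint $\sigma_S(u)=1$ forces $T(u)-2S=L(u)$, and hence $T(u)>2S$ since $L(u)>0$; by continuity $T(v)>2S$ for $v$ in a neighbourhood of $u$, so on that neighbourhood the explicit formula \eqref{LCN} is valid and $\lambda^S_\mu$ is $C^1$ (each of $T,L,A,B,Q$ is $C^1$ on $\Sigma$, and the two square roots are differentiated away from their zeros).

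The key step is the identity
\[
 D\lambda^S_\mu(u)=-\frac{1}{Q(u)}\,DS_{\lambda,\mu}(u),
\]
valid at any $u$ with $\sigma_S(u)=1$ and $\lambda^S_\mu(u)=\lambda$. I would differentiate \eqref{LCN} directly. The only delicate term is $(T(u)-2S)^{1/2}L(u)^{1/2}$; evaluating its differential at a point where $T(u)-2S=L(u)$, the two square-root prefactors collapse to $1$ (this is precisely the first-order condition $\partial_\sigma\Lambda^S_\mu(u_\sigma)|_{\sigma=1}=0$ encoded in $\sigma_S(u)=1$), leaving $\tfrac12 DT(u)+\tfrac12 DL(u)$. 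Comparing with the expression for $DS_{\lambda,\mu}(u)$, the differential of the numerator in \eqref{LCN} equals $DS_{\lambda,\mu}(u)-\lambda DQ(u)$. Finally, using $\lambda^S_\mu(u)=\lambda$, i.e.\ that this numerator equals $-\lambda Q(u)$, the quotient rule produces exactly the claimed identity, the $DQ(u)$ terms cancelling.

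With this identity in hand both implications are short. For the direct implication, assume $u$ is a weak solution of \eqref{1S} with $S_{\lambda,\mu}(u)=S$. Corollary~\ref{cor:diffSigma} gives $\sigma_S(u)=1$; then $\lambda^S_\mu(u)=\Lambda^S_\mu(u_{\sigma_S(u)})=\Lambda^S_\mu(u)=\lambda$, the last equality because $S_{\lambda,\mu}(u)=S$ is equivalent to $\Lambda^S_\mu(u)=\lambda$. The key identity then gives $D\lambda^S_\mu(u)=-Q(u)^{-1}DS_{\lambda,\mu}(u)=0$. Conversely, assume $D\lambda^S_\mu(u)=0$, $\lambda^S_\mu(u)=\lambda$ and $\sigma_S(u)=1$. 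Again $\sigma_S(u)=1$ yields $\lambda^S_\mu(u)=\Lambda^S_\mu(u)$, so $\Lambda^S_\mu(u)=\lambda$ and hence $S_{\lambda,\mu}(u)=S$; the key identity turns $D\lambda^S_\mu(u)=0$ into $DS_{\lambda,\mu}(u)=0$, i.e.\ $u$ solves \eqref{1S}. Thus $u$ is a weak solution of \eqref{1S} with prescribed action $S$.

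I expect the main obstacle to be the bookkeeping in the key identity: justifying that $\sigma_S(u)=1$ places $u$ strictly inside the region $\{T>2S\}$ on which \eqref{LCN} is smooth, and verifying the cancellation of the $DQ(u)$ terms so that $D\lambda^S_\mu(u)$ is genuinely proportional to $DS_{\lambda,\mu}(u)$, rather than proportional only modulo a multiple of $DQ(u)$. An alternative, equivalent route avoids the explicit formula and instead writes $\lambda^S_\mu(u)=\Lambda^S_\mu(u_{\sigma_S(u)})$ and applies the envelope principle, the contribution of $D\sigma_S(u)$ dropping out because $\sigma_S(u)$ is a critical point of $\sigma\mapsto\Lambda^S_\mu(u_\sigma)$; but this requires the differentiability of $u\mapsto\sigma_S(u)$ via the implicit function theorem, which is exactly what the direct computation sidesteps.
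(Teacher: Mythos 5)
Your proof is correct and takes essentially the same route as the paper: both rest on the pointwise identity relating $D\lambda^S_\mu(u)$ to $Q(u)^{-1}DS_{\lambda,\mu}(u)$ at points where $\sigma_S(u)=1$ and $\lambda^S_\mu(u)=\lambda$, obtained by direct differentiation of \eqref{LCN}, together with Corollary~\ref{cor:diffSigma} and the equivalences following \eqref{eq:alRq} for the weak-solution direction. (A minor remark: your sign $D\lambda^S_\mu(u)=-Q(u)^{-1}DS_{\lambda,\mu}(u)$ is the correct one, whereas the paper's \eqref{L=S} displays $+Q(u)^{-1}$; this is immaterial, since only the simultaneous vanishing of the two differentials is used.)
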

\begin{proof}
Since $ \lambda^S_\mu(u)=\lambda$, $\sigma_S( u)=1$, we have $\Lambda^S_{\mu}(u)=\lambda$, and consequently,  
 $S_{\lambda,\mu}(u)=S$. From \eqref{LCN}, by direct calculations of $D\lambda^S_\mu(u )$ it follows   
\begin{equation}\label{L=S}
D\lambda^S_\mu(u )=-\frac{1}{Q(u)}DS_{\lambda,\mu}(u), ~~\forall u \in
\Sigma\setminus\{0\}~~\text{such that}~~\Lambda^S_\mu(u )=\lambda,\  \sigma_S(u)=1,	
\end{equation}
and thus, $DS_{\lambda,\mu}(u)=0$.
 
Conversely, assume that $u$
is a weak solution of \eqref{1S} such that
$S=S_{\lambda,\mu}(u)$. Then $\Lambda^S_\mu(u )=\lambda$ and
$D\Lambda^S_\mu(u )=0$. Moreover, by Corollary \ref{cor:diffSigma},
 $\sigma_S(u)=1$, and hence,
$\lambda^S_\mu(u )=\Lambda^S_\mu(u )=\lambda$. Consequently,  \eqref{L=S} yields  $D\lambda^S_\mu(u )=0$. 
\end{proof}

\begin{remark}\label{rem:T(u)>2}
 Lemma \ref{CritR} implies that if $u$ is a weak solution
of \eqref{1S} with prescribed action $S$, then the strong inequality $T(u)>2S$ holds, and thus,   \eqref{Sch} has no standing wave
        $\psi_\lambda=e^{i\lambda t}u$ with action value  $S:=S_{\lambda,\mu}(u)\geq T(u)/2$. 
\end{remark}

\begin{remark}
In view of  the homogeneity of $\lambda^S_\mu(u )$, we may always
assume that any critical point $u$ of $\lambda^S_\mu(u )$ satisfies
$\sigma_S(u)=1$.  
\end{remark}
\emph{From now on, we assume $2<p<q<\infty$.} 
\smallbreak

We look for solutions of Equation~\eqref{1S} in the case
$\lambda > 0$. Therefore  
we are interested in knowing when the condition $\hat{\lambda}^S_\mu:=\sup_{u \in \Sigma_S\setminus
\{0\}}\lambda^S_\mu(u )>0$, $S \in \R$, is satisfied. 
The feasibility of this condition depends on
parameter $\mu$. The limit point of  $\mu$ where
$\hat{\lambda}^S_\mu>0$ is satisfied will be called extreme value of
$\mu$.
 
To find this value, we  reapply the NG-Rayleigh quotient method to the
functional $\lambda^S_\mu(u)$ (see \cite{CarlIlSan}) and therefore
introduce 
%\begin{equation}\label{eq:J}
	%M^S(u):= \frac{-S+\frac{1}{2}T(u)+\frac{1}{2}L(u)+\frac{1}{q}B(u)}{\frac{1}{p}A(u)},~~u\in \Sigma.
%\end{equation}
\begin{equation*}
	\mu^S(u):=\frac{\(T(u)-2S\)^{1/2}L(u)^{1/2}
          +\frac{1}{q}B(u)}{\frac{1}{p}A(u)},\quad u \in \Sigma_S,
\end{equation*}
which is characterized by the fact that $\mu^S(u)=\mu \Leftrightarrow \lambda^S_\mu(u)=0$, and $\mu^S(u)<\mu \Leftrightarrow \lambda^S_\mu(u)>0$.
For $S\in \R$, define
\begin{equation}\label{eq:hat-mu}
	\hat{\mu}^S:=\inf_{u \in \Sigma_S}\mu^S(u).
\end{equation}

%
%\begin{proposition} 
%Let $N=2$, $2<p<q<\infty$, $S\in \R$.
%For any $u \in \Sigma_S$, there
%exists a global minimum $\mu^{S}(u)>0$ of $M^S(tu)$, which is
%attained at some $t^M(u)>0$ if $S\leq 0$, $t^M(u)\ge t_{\rm
  %min}$ if $S>0$ and 
%\[
  %\mu^{S}(u)=
  %\left\{
    %\begin{aligned}
      %&\min_{t\geq t_{\rm min}} M^S(tu)\geq 0,\quad\text{if }S>0,\\
      %&\min_{t>0} M^S(tu)\geq 0,\quad\text{if }S\le 0.
    %\end{aligned}
  %\right.
%\]
%\end{proposition}
%\begin{proof}
%Observe that $M^S(tu) \to +\infty$ as $t\to +\infty$. Moreover,
%$M^S(tu)\geq 0$ whenever it is defined.  \\
%{\bf First case: $S>0$.} Since $M^S(t_{\rm
  %min}u)>0$,  there exists a global minimum
%$\mu^{S}(u)\geq 0$ of the continuous map $t\mapsto M^S(tu)$ on
%$(t_{\rm min}, +\infty)$.\\
%{\bf Second case: $S\le 0$.} We note that $M^S(tu) \to +\infty$ as
%$t\to 0$ (recall that $T(u)L(u)\not =0$), and we conclude like in the
%first case. 
%\end{proof}

Obviously, $\hat{\mu}^S \geq 0$.  
\begin{proposition}\label{prop:2}
Let $N=2$, $2<p<q<\infty$, $S\in \R$. Then $\hat{\mu}^S$ is an extreme
value of $\mu$, that is for any $\mu >\hat{\mu}^S$  condition $\hat{\lambda}^S_\mu>0$
holds, whereas if  $\mu \leq \hat{\mu}^S$, then
$\hat{\lambda}^S_\mu\le 0$.  
\end{proposition}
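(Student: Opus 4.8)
The plan is to reduce the entire statement to the elementary relation between $\lambda^S_\mu(u)$ and $\mu^S(u)$ recorded just before the proposition, packaged as a single algebraic identity, and then to read off both assertions directly from the definition of $\hat{\mu}^S$ as an infimum. Throughout, both quantities are understood only for $u\in\Sigma_S\setminus\{0\}$, since $Q(u)>0$ and $A(u)=\int|u|^p>0$ require $u\neq 0$.

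First I would make the dependence on $\mu$ fully explicit. Starting from the formula \eqref{LCN} for $\lambda^S_\mu$ and the definition of $\mu^S$, substituting $(T(u)-2S)^{1/2}L(u)^{1/2}+\tfrac1qB(u)=\mu^S(u)\cdot\tfrac1pA(u)$ and rearranging gives, for every $u\in\Sigma_S\setminus\{0\}$,
\begin{equation*}
\lambda^S_\mu(u)=\frac{\tfrac{1}{p}A(u)}{Q(u)}\bigl(\mu-\mu^S(u)\bigr).
\end{equation*}
The prefactor $\tfrac{1}{p}A(u)/Q(u)$ is strictly positive, so the sign of $\lambda^S_\mu(u)$ coincides with the sign of $\mu-\mu^S(u)$. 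This one identity simultaneously encodes the two equivalences $\mu^S(u)=\mu\Leftrightarrow\lambda^S_\mu(u)=0$ and $\mu^S(u)<\mu\Leftrightarrow\lambda^S_\mu(u)>0$ stated above, and it is essentially the only computation in the argument.

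The two halves of the proposition then follow from $\hat{\mu}^S=\inf_{u\in\Sigma_S}\mu^S(u)$. If $\mu>\hat{\mu}^S$, then $\mu$ fails to be a lower bound for $\{\mu^S(u):u\in\Sigma_S\setminus\{0\}\}$, so there exists $u\in\Sigma_S\setminus\{0\}$ with $\mu^S(u)<\mu$; the displayed identity yields $\lambda^S_\mu(u)>0$, which is exactly condition $(E_\mu)$. Conversely, if $\mu\le\hat{\mu}^S$, then every $u\in\Sigma_S\setminus\{0\}$ satisfies $\mu^S(u)\ge\hat{\mu}^S\ge\mu$, hence $\mu-\mu^S(u)\le 0$, and the identity gives $\lambda^S_\mu(u)\le 0$. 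I do not expect a genuine obstacle here: all the analytic content—the explicit minimizer $\sigma_S(u)=\bigl((T(u)-2S)/L(u)\bigr)^{1/4}$, the resulting form \eqref{LCN} of the NG-Rayleigh quotient, and the positivity of $A$ and $Q$ on $\Sigma_S\setminus\{0\}$—is already in place, so the proposition is little more than a restatement of the construction of $\mu^S$ through its infimum. The only point deserving care is the correct handling of strict versus non-strict inequalities at the threshold $\mu=\hat{\mu}^S$, which the factorization identity renders transparent.
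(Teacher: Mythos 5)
Your proposal is correct and follows essentially the same route as the paper: the authors likewise observe that $\mu^S(u)<\mu\Leftrightarrow\lambda^S_\mu(u)>0$ (your factorization $\lambda^S_\mu(u)=\tfrac{1}{p}A(u)Q(u)^{-1}\bigl(\mu-\mu^S(u)\bigr)$ just makes that equivalence explicit) and then read both directions off the definition of $\hat{\mu}^S$ as an infimum. No gap; the only difference is that you write out the algebraic identity the paper leaves implicit.
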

\begin{proof} Let $\mu >	\hat{\mu}^S$. Then by
  \eqref{eq:hat-mu},  there exists $u
  \in \Sigma_S$ such that  $\hat{\mu}^S<\mu^S(u)<\mu$, and therefore,
  $\lambda^S_\mu(u )>0$  and
    $\hat{\lambda}^S_\mu>0$. 
The converse follows immediately. 	
\end{proof}
Observe that \eqref{eq:hat-mu} can be written in an equivalent form. Indeed, consider for $S\in \R$,
\begin{equation}\label{eq:J}
	M^S(u):=
        \frac{-S+\frac{1}{2}T(u)+\frac{1}{2}L(u)+\frac{1}{q}B(u)}{\frac{1}{p}A(u)},\quad
        u\in \Sigma_S.
\end{equation}
Observe that $M^S(u_\sigma) \to +\infty$ as $\sigma \to 0$ and
$M^S(u_\sigma) \to +\infty$ as $\sigma \to +\infty$. An analysis
similar to that for $\Lambda^S_{\mu}(u)$  shows that 
\begin{equation}\label{eq:M^S0}
	\min_{\sigma>0} M^S(u_\sigma)=\frac{\(T(u)-2S\)^{1/2}L(u)^{1/2} +\frac{1}{q}B(u)}{\frac{1}{p}A(u)} = \mu^S(u). 
\end{equation}
Hence
\[
\hat{\mu}^S=\inf_{u \in \Sigma_S}M^S(u).
\]
\begin{proposition}\label{prop:posit.bound}
	Assume that $S\leq 0$. Then $\hat{\mu}^S>0$.  
\end{proposition}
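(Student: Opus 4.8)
The plan is to bound $\hat{\mu}^S$ from below by a positive constant by controlling $A(u)$ from above in terms of $T(u)$, $L(u)$, and $B(u)$, uniformly over $u\in\Sigma_S$. Recall that $\hat{\mu}^S=\inf_{u\in\Sigma_S}M^S(u)$, where $M^S(u)=\big(-S+\tfrac12 T(u)+\tfrac12 L(u)+\tfrac1q B(u)\big)\big/\big(\tfrac1p A(u)\big)$. Since $S\le 0$, the numerator satisfies $N(u):=-S+\tfrac12 T(u)+\tfrac12 L(u)+\tfrac1q B(u)\ge \tfrac12 T(u)+\tfrac12 L(u)+\tfrac1q B(u)$, so it suffices to prove a Gagliardo--Nirenberg-type bound $A(u)\le C\big(\tfrac12 T(u)+\tfrac12 L(u)+\tfrac1q B(u)\big)$ for some $C>0$ independent of $u$. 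This immediately gives $A(u)\le C\,N(u)$, hence $M^S(u)=pN(u)/A(u)\ge p/C$, and therefore $\hat{\mu}^S\ge p/C>0$. Note that on $\Sigma_S\subseteq\Sigma\setminus\{0\}$ we have $A(u)>0$, so $M^S(u)$ is well defined.

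The key step is the pointwise (in $u$) inequality $A(u)\le T(u)^{r_1}L(u)^{r_2}B(u)^{r_3}$ with $r_1=r_2=\frac{q-p}{2(q-2)}$ and $r_3=\frac{p-2}{q-2}$, which I would derive from two standard ingredients. First, log-convexity of Lebesgue norms gives $\|u\|_{L^p}\le \|u\|_{L^2}^{1-\theta}\|u\|_{L^q}^{\theta}$ with $\theta=\frac{(p-2)q}{(q-2)p}\in(0,1)$, the inclusion $\theta\in(0,1)$ being exactly where $2<p<q$ is used. Second, the uncertainty principle \eqref{eq:uncertainty}, which for $N=2$ reads $\|u\|_{L^2}^2\le L(u)^{1/2}T(u)^{1/2}$, converts the $L^2$ factor into $T$ and $L$. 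Raising to the power $p$ and writing $B(u)=\|u\|_{L^q}^q$ yields the claimed exponents. The crucial arithmetic fact is that these exponents balance: $2r_1+r_3=\frac{q-p}{q-2}+\frac{p-2}{q-2}=1$. This homogeneity match is precisely what prevents the ratio $M^S$ from degenerating to zero.

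Since $r_1+r_2+r_3=1$, the weighted arithmetic--geometric mean (Young) inequality gives $T^{r_1}L^{r_2}B^{r_3}\le r_1 T+r_2 L+r_3 B$, so that $A(u)\le r_1 T(u)+r_2 L(u)+r_3 B(u)$. Comparing coefficients term by term with $\tfrac12 T+\tfrac12 L+\tfrac1q B$ produces the desired constant $C=\max\{2r_1,\,q r_3\}=\max\big\{\tfrac{q-p}{q-2},\,\tfrac{q(p-2)}{q-2}\big\}$, which is finite and positive. This closes the argument, giving $\hat{\mu}^S\ge p/C>0$.

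The only genuinely non-routine point — and the thing to get right — is the homogeneity bookkeeping that forces the three exponents to sum to exactly $1$, allowing Young's inequality to bound the geometric mean by the \emph{sum} (rather than by a superadditive expression with the wrong scaling). Everything else, including the explicit value of the constant, is elementary; in particular one should record that for $S\le 0$ the constraint $T(u)>2S$ defining $\Sigma_S$ imposes nothing beyond $u\neq 0$, so the bound is valid on all of $\Sigma_S$.
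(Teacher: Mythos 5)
Your proof is correct, and it rests on the same key estimate as the paper's: the interpolation bound $\|u\|_{L^p}\le \bigl(\|xu\|_{L^2}\|\nabla u\|_{L^2}\bigr)^{\frac{q-p}{p(q-2)}}\|u\|_{L^q}^{\frac{q(p-2)}{p(q-2)}}$, obtained by combining H\"older's inequality with the two-dimensional uncertainty principle \eqref{eq:uncertainty}, together with the arithmetic fact that the resulting exponents on $T$, $L$, $B$ sum to one. The only difference is in how the inhomogeneity of $M^S$ is handled: the paper first drops $-S$ to reduce to $M^0$ and then minimizes $M^0(tu)$ explicitly over scalar multiples $t>0$, which produces the sharp scale-invariant quotient and hence the explicit constant $C_{p,q}$ appearing in \eqref{eq:muo}; you instead apply the weighted AM--GM inequality at $t=1$, which yields a cruder but still positive lower bound. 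For the proposition as stated the two routes are interchangeable; the paper's version has the side benefit of identifying $\hat{\mu}^0$ with the expression \eqref{eq:muo} that enters the hypothesis of Theorem~\ref{theo:main}.
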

\begin{proof}
Since for $S\leq 0$,  $M^S(u)\ge M^0(u)$,  
\begin{equation}\label{eq:estb}
 \hat{\mu}^S\geq \inf_{u \in \Sigma\setminus \{0\}}M^0(u),~~~ \forall
S\le 0.
\end{equation}
In view of \eqref{eq:J},  it is easily seen
that for any $u \in \Sigma\setminus \{0\}$ the 
minimum of  $\min_{t>0}M^0(tu)$ is attained at the unique minimizing
point $t_m(u)$ given by

\begin{equation*}
t_m(u)=c_{p,q}\(\frac{T(u)+L(u)}{2B(u)}\)^{1/(q-2)},\quad
 c_{p,q}= \(\frac{(p-2)q}{q-p}\)^{1/(q-2)}.
\end{equation*}
 Hence by \eqref{eq:estb} we have
\[
\hat{\mu}^S\geq \inf_{u \in \Sigma\setminus \{0\}}M^0(t_m(u)u)=C_{p,q}\inf_{u \in \Sigma\setminus \{0\}}
\frac{\(T(u)+L(u)\)^{\frac{q-p}{q-2}}B(u)^{\frac{p-2}{q-2}}}{ A(u)}, ~~ \forall S\leq 0.
\]
for 
\begin{equation*}
  C_{p,q}=\frac{q-2}{p-2}\times\frac{p}{q}\times
  \frac{1}{2^{\frac{q-p}{q-2}}}\times c_{p,q}^{q-p}=
 \frac{(q-2)p}{\(2(q-p)\)^{\frac{q-p}{q-2}}\((p-2)q\)^{\frac{p-2}{q-2}}}.
\end{equation*}
 
By \eqref{eq:uncertainty}, Sobolev and
H\"older inequalities, we have 
\[
\|u\|_{L^p}\leq C_0 \(\|xu\|_{L^2}\|\nabla u\|_{L^2}\)^{\frac{(q-p)}{p(q-2)}} \|u\|_{L^q}^{\frac{q(p-2)}{p(q-2)}}, ~~\forall u \in \Sigma\setminus \{0\}, 
\]
where $C_0>0$ is a constant, and thus, $\hat{\mu}^S>0$ for any $S\leq 0$. 
\end{proof}

%%%%%%%%%%%%%%%%%%%%%%%%%%%%%%%%%%%%%%%%%%%%%%%%%%%%%%%

\section{Existence of a fundamental frequency solution with	prescribed action}\label{sec:existence}
Recall that $N=2$ and  $2<p<q<\infty$. 
Consider
\begin{equation}\label{FFP}
\hat{\lambda}^S_\mu:=\sup\left\{\lambda^S_\mu(u ); \ u \in \Sigma \setminus
\{0\},\ T(u)-2S\ge 0 \right\},
\end{equation}
with $\mu \geq	\hat{\mu}^S$.  By construction, we may also write
	\begin{equation}\label{FFPG}
\hat{\lambda}^S_\mu=\hat{\Lambda}^S_\mu:=\sup\left\{\Lambda^S_\mu(u
  );\ u
\in \Sigma \setminus \{0\}, \ T(u)-2S\ge 0\right\}.
\end{equation}
In the sequel, we use \eqref{FFP} when we want to take advantage of
the invariance of $\lambda^S_\mu$ with respect to spatial dilations,
$\lambda^S_\mu(u_\sigma)=\lambda^S_\mu(u)$ for all $\sigma>0$.

\begin{lemma}\label{lem:finite}
	Let $S\in \R$. If $\mu \geq	\hat{\mu}^S$, then   
  $0\leq \hat{\lambda}_\mu^S<\infty$. Moreover, if
        $\mu>\hat{\mu}^S$, then  
  $0<\hat{\lambda}_\mu^S<\infty$.
\end{lemma}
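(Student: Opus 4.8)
The plan is to exploit the explicit relation between $\lambda^S_\mu$ and $\mu^S$, together with an interpolation estimate that lets the defocusing term $B(u)$ dominate the focusing term $A(u)$. First I would record the identity obtained from \eqref{LCN} by substituting the defining relation $(T(u)-2S)^{1/2}L(u)^{1/2}+\frac1q B(u)=\frac1p A(u)\,\mu^S(u)$, namely
\[
\lambda^S_\mu(u)=\frac{A(u)}{pQ(u)}\bigl(\mu-\mu^S(u)\bigr),\qquad u\in\Sigma,\ T(u)\ge 2S,
\]
which holds on the whole feasible set (including the boundary $T(u)=2S$, where $\mu^S(u)=pB(u)/qA(u)$) and makes the sign transparent, since $\mu^S(u)\ge\hat\mu^S$ by \eqref{eq:hat-mu}.

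For finiteness I would discard the nonnegative term $(T(u)-2S)^{1/2}L(u)^{1/2}$ (legitimate on $\{T\ge 2S\}$) to get $\lambda^S_\mu(u)\le \bigl(\frac\mu p A(u)-\frac1q B(u)\bigr)/Q(u)$, and then control $A$ by $Q$ and $B$. The clean tool is the $L^p$ interpolation inequality $\|u\|_{p}\le\|u\|_2^{1-\theta}\|u\|_q^{\theta}$ with $\frac1p=\frac{1-\theta}2+\frac\theta q$, which reads $A(u)\le (2Q(u))^{c}B(u)^{d}$ with $c=\frac{q-p}{q-2}$, $d=\frac{p-2}{q-2}$ and $c+d=1$. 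A weighted Young inequality (scaling $2Q$ against $B$) then yields $\frac\mu p A(u)\le \frac1q B(u)+C\,Q(u)$ for a constant $C=C(\mu,p,q)$, whence $\lambda^S_\mu(u)\le C$ uniformly in $u$ and $\hat\lambda^S_\mu\le C<\infty$.

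For the lower bound I would split according to whether $\mu>\hat\mu^S$ or $\mu=\hat\mu^S$. When $\mu>\hat\mu^S$, Proposition~\ref{prop:2} (condition $(E_\mu)$) provides $u\in\Sigma_S$ with $\lambda^S_\mu(u)>0$; since such $u$ is admissible in \eqref{FFP}, this gives at once $\hat\lambda^S_\mu>0$, covering the ``moreover'' part. When $\mu=\hat\mu^S$ the identity shows $\lambda^S_\mu(u)\le 0$ for every admissible $u$, so $\hat\lambda^S_\mu\le 0$, and it remains to produce an admissible sequence along which $\lambda^S_\mu\to 0$. I would take $u_n\in\Sigma_S$ with $\mu^S(u_n)\to\hat\mu^S$; then $\hat\mu^S-\mu^S(u_n)\to 0^-$, and by the identity it suffices that $A(u_n)/Q(u_n)$ stay bounded. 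Here the same interpolation pays off: from $\mu^S(u)\ge \frac pq B(u)/A(u)\ge \frac pq\,(B(u)/2Q(u))^{c}$ and the boundedness of $\mu^S(u_n)$ one gets $B(u_n)/Q(u_n)$ bounded, hence $A(u_n)/Q(u_n)\le 2^{c}(B(u_n)/Q(u_n))^{d}$ bounded as well. Thus $\lambda^S_\mu(u_n)\to 0$, giving $\hat\lambda^S_\mu\ge 0$, so $\hat\lambda^S_{\hat\mu^S}=0$.

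The only genuinely delicate point is this last step: a priori an infimizing sequence for $\mu^S$ could have $A/Q\to\infty$, in which case the vanishing factor $\hat\mu^S-\mu^S(u_n)$ would not force $\lambda^S_\mu(u_n)\to 0$. The resolution — that $A/Q$ is automatically controlled by $B/Q$, which itself cannot blow up without sending $\mu^S$ to $+\infty$ — is exactly the manifestation of the defocusing nonlinearity dominating the focusing one, and it is what makes the boundary value $\hat\lambda^S_{\hat\mu^S}=0$ attained as a supremum (not necessarily as a maximum).
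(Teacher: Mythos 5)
Your proof is correct, and while its core ingredients coincide with the paper's (the H\"older interpolation \eqref{HSineq} to make the defocusing term dominate, and Proposition~\ref{prop:2} for strict positivity when $\mu>\hat{\mu}^S$), it differs in two ways worth noting. For finiteness, the paper normalizes $\|u_n\|_{L^2}=1$ by dilation invariance and derives a contradiction from the boundedness above of $t\mapsto C_1t^{\theta q}-\frac{1}{q}t^{q}$; your weighted Young inequality gives the same conclusion directly as a uniform bound $\hat{\lambda}^S_\mu\le C(\mu,p,q)$, without normalization or contradiction --- same idea, cleaner packaging. The substantive difference is the borderline case $\mu=\hat{\mu}^S$: the paper simply writes that $\hat{\lambda}^S_\mu\ge 0$ ``by Proposition~\ref{prop:2}'', but that proposition produces an admissible $u$ with $\lambda^S_\mu(u)>0$ only when $\mu>\hat{\mu}^S$; at $\mu=\hat{\mu}^S$ it yields the \emph{opposite} inequality $\lambda^S_\mu(u)\le 0$ on $\Sigma_S$, so an extra argument is genuinely needed there. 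Your identity $\lambda^S_\mu(u)=\frac{A(u)}{pQ(u)}\bigl(\mu-\mu^S(u)\bigr)$, combined with the observation that boundedness of $\mu^S(u_n)$ forces $B(u_n)/Q(u_n)$, hence $A(u_n)/Q(u_n)$, to stay bounded, shows that along any infimizing sequence of \eqref{eq:hat-mu} one has $\lambda^S_{\hat{\mu}^S}(u_n)\to 0$, which is exactly the missing step; it also identifies $\hat{\lambda}^S_{\hat{\mu}^S}=0$ (for $S\le 0$ this is recovered later in the paper via the minimizer of Lemma~\ref{lem:CritM}, but for $S>0$ your sequential argument is the only one available). Two small remarks: your assertion $\mu^S(u)\ge\hat{\mu}^S$ on the boundary $T(u)=2S$ (relevant only for $S>0$, and only for your supplementary claim $\hat{\lambda}^S_{\hat{\mu}^S}\le 0$, not for the lemma itself) needs a one-line approximation, e.g. $tu\in\Sigma_S$ for $t>1$ and $\mu^S(tu)\to\mu^S(u)$ as $t\downarrow 1$, since \eqref{eq:hat-mu} is an infimum over the open set $\Sigma_S$; and a softer alternative for the endpoint is to note that $\mu\mapsto\hat{\lambda}^S_\mu$ is a supremum over $u$ of affine functions of $\mu$, hence convex, and finite by your first step, hence continuous, so its positivity for $\mu>\hat{\mu}^S$ already forces $\hat{\lambda}^S_{\hat{\mu}^S}\ge 0$.
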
		

\begin{proof} By Proposition~\ref{prop:2},
  $\hat{\lambda}_\mu^S\ge 0$.
    H\"older inequality yields 
\begin{equation}\label{HSineq}
\|u\|_{L^p} \leq  \|u\|_{L^2}^{1-\theta}\|u\|_{L^q}^{\theta}, \quad
\forall u\in H^1(\R^2), \quad \theta= \frac{q(p-2)}{p(q-2)}\in (0,1),
\end{equation}
where we have used Sobolev embedding to ensure the finiteness of each
term. 
\smallbreak

Suppose that $\hat\lambda_\mu^S=\infty$: there exists a sequence
$(u_n)_n$ in $\Sigma\setminus\{0\}$, with $T(u_n)\ge 2S$, such that
\begin{equation*}
  \lambda_\mu^S(u_n)\Tend n \infty \infty.  
\end{equation*}
In particular, 
\begin{equation}\label{eq:DV}
  \lambda_\mu^S(u_n)\le \frac{1}{\|u_n\|_{L^2}^2}\(
  \frac{\mu}{p}\|u_n\|_{L^p}^p-\frac{1}{q}\|u_n\|_{L^q}^q\) \Tend n \infty \infty.  
\end{equation}
Since $\lambda_\mu^S(u)= \lambda_\mu^S(u_\si)$ for any dilation
parameter $\sigma>0$, we may  assume $\|u_n\|_{L^2}^2=1$ for all
$n$. 
Then
\eqref{HSineq} and \eqref{eq:DV}  yield
\begin{equation*}
  \lambda_\mu^S(u_n)\le
  \frac{\mu}{p}\|u_n\|_{L^p}^p-\frac{1}{q}\|u_n\|_{L^q}^q  \le
  C_1 \|u_n\|_{L^q}^{\theta p} -\frac{1}{q}\|u_n\|_{L^q}^q\Tend
  n\infty \infty,
\end{equation*}
which is impossible,  since $\theta p<q$, and so $\lambda_\mu^S<\infty$. 

It follows from Proposition \ref{prop:2} that for all
  $\mu>\hat{\mu}^S$,  $\hat{\lambda}_\mu^S>0$. 
\end{proof}	
\begin{lemma}\label{lem11}
	Let $S\in \R$. If
        $0<\hat{\lambda}_\mu^S<\infty$, then there exists a maximizer
  $\hat{u}_\mu^S$ of \eqref{FFP}, that is,
  $\hat{\lambda}_\mu^S=\lambda^S_\mu(\hat{u}_\mu^S)$.
\end{lemma}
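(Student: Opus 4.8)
The plan is to run the direct method of the calculus of variations on \eqref{FFP}, the key analytic input being the compactness of the embedding $\Sigma\hookrightarrow L^r(\R^2)$ for every $2\le r<\infty$, which is classical for the harmonic oscillator (the operator $-\Delta+|x|^2$ has compact resolvent). Since $\lambda^S_\mu$ is invariant under the dilations $u\mapsto u_\sigma$, and since in dimension $N=2$ one has $T(u_\sigma)=T(u)$, $L(u_\sigma)=\sigma^4L(u)$ and $Q(u_\sigma)=\sigma^2Q(u)$, I would first use this invariance to normalize a maximizing sequence $(u_n)$ for \eqref{FFP} so that $\|u_n\|_{L^2}^2=2$, i.e. $Q(u_n)=1$, while preserving the (dilation-invariant) constraint $T(u_n)\ge 2S$. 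Writing $D_n:=(T(u_n)-2S)^{1/2}L(u_n)^{1/2}\ge0$, the formula \eqref{LCN} reads $\lambda^S_\mu(u_n)=\frac{\mu}{p}A(u_n)-\frac1qB(u_n)-D_n\to\hat{\lambda}^S_\mu>0$.

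The heart of the argument is to show that, after this normalization, $(u_n)$ is bounded in $\Sigma$, in three steps. First, since $D_n\ge0$ and $\lambda^S_\mu(u_n)\to\hat{\lambda}^S_\mu>0$, for large $n$ one has $\frac1qB(u_n)\le\frac{\mu}{p}A(u_n)$; combined with the interpolation \eqref{HSineq}, which in normalized form gives $A(u_n)\le C\,B(u_n)^{(p-2)/(q-2)}$ with exponent $(p-2)/(q-2)<1$, this forces $B(u_n)$, and hence $A(u_n)$, to be bounded. Consequently $D_n\le\frac{\mu}{p}A(u_n)$ is bounded, i.e. $(T(u_n)-2S)L(u_n)\le C$. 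Second, I would show $\liminf_n L(u_n)>0$ (no concentration at the origin): if $L(u_n)\to0$ then, by Chebyshev, $\int_{|x|>\delta}|u_n|^2\le L(u_n)/\delta^2\to0$ for each fixed $\delta>0$, so $\int_{|x|\le\delta}|u_n|^2\to2$; Hölder on $B(0,\delta)$ gives $2\le\liminf B(u_n)^{2/q}(\pi\delta^2)^{1-2/q}$, and letting $\delta\to0$ (using $q>2$) would force $B(u_n)\to\infty$, contradicting the first step. With $L(u_n)\ge c>0$ and $(T(u_n)-2S)L(u_n)\le C$, this bounds $T(u_n)$ from above. Third, to bound $L(u_n)$ from above, I would exclude spreading: if $L(u_n)\to\infty$ then $T(u_n)-2S\le C/L(u_n)\to0$, which is impossible for $S<0$ (since $T(u_n)-2S\ge-2S>0$), and for $S=0$ forces $T(u_n)\to0$, whence $A(u_n)\le C\,T(u_n)^{(p-2)/2}\|u_n\|_{L^2}^2\to0$ by Gagliardo--Nirenberg and $\lambda^S_\mu(u_n)\le\frac{\mu}{p}A(u_n)\to0$, contradicting $\hat{\lambda}^S_\mu>0$. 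Thus $\|u_n\|_\Sigma^2=T(u_n)+L(u_n)\le C$ when $S\le0$.

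With $(u_n)$ bounded in $\Sigma$, I would extract a subsequence $u_n\rightharpoonup u$ in $\Sigma$; by the compact embedding $u_n\to u$ strongly in $L^2$, $L^p$ and $L^q$, so $Q(u)=1$ (in particular $u\neq0$), $A(u_n)\to A(u)$, $B(u_n)\to B(u)$, while weak lower semicontinuity gives $T(u)\le\liminf T(u_n)$ and $L(u)\le\liminf L(u_n)$. Assuming $u$ admissible, i.e. $T(u)\ge2S$ (automatic for $S\le0$), the elementary inequality $\liminf(a_nb_n)\ge\liminf a_n\cdot\liminf b_n$ for nonnegative sequences yields $(T(u)-2S)^{1/2}L(u)^{1/2}\le\liminf D_n$, so passing to the limit in \eqref{LCN} gives $\lambda^S_\mu(u)=\frac{\mu}{p}A(u)-\frac1qB(u)-(T(u)-2S)^{1/2}L(u)^{1/2}\ge\limsup_n\lambda^S_\mu(u_n)=\hat{\lambda}^S_\mu$. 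Since $u$ is admissible, also $\lambda^S_\mu(u)\le\hat{\lambda}^S_\mu$, whence equality, and $\hat{u}^S_\mu:=u$ is the sought maximizer.

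The step I expect to be the main obstacle is precisely the boundedness/compactness of the maximizing sequence together with the feasibility $T(u)\ge2S$ of the weak limit. For $S\le0$ both are immediate, since $2S\le0\le T(u)$ and the spreading alternative is excluded as above; this is exactly the range relevant to the ground-state and orbital-stability parts of Theorem~\ref{theo:main2}. For $S>0$, however, the constraint $\Sigma_S=\{T>2S\}$ is not weakly closed, and a maximizing sequence could a priori lose compactness by escaping to spatial infinity along the boundary, with $T(u_n)\to2S$ and $L(u_n)\to\infty$ (a translated profile $v(\,\cdot-y_n)$ satisfying $T(v)=2S$ keeps $\lambda^S_\mu$ constant while $\|v(\,\cdot-y_n)\|_\Sigma\to\infty$). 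Ruling this out is the crux, and I would expect it to require the strict hypothesis $\mu>\hat{\mu}^S$ to ensure that $\hat{\lambda}^S_\mu$ strictly exceeds the value of any such boundary/escaping configuration, so that a bounded maximizing sequence can be selected and $T(u)\ge2S$ survives in the limit; this is where the harmonic confinement (compactness of $\Sigma\hookrightarrow L^r$) must be combined quantitatively with the assumption on $\mu$.
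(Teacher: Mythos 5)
Your argument is correct and complete on the range $S\le 0$, and there it follows essentially the paper's route (direct method, dilation normalization, interpolation bounds, compact embedding of $\Sigma$ into $L^r(\R^2)$, passage to the limit), in places more cleanly: normalizing $Q(u_n)=1$ at the outset is legitimate (in 2D $T$ is dilation-invariant, so the constraint $T\ge 2S$ survives) and removes the paper's two-stage structure (``assume the maximizing sequence is $L^2$-bounded, then prove that it must be''), and your Chebyshev--H\"older non-concentration step replaces the paper's appeal to \eqref{eq:IMRN}. However, the lemma is stated for \emph{every} $S\in\R$ under the sole hypothesis $0<\hat{\lambda}_\mu^S<\infty$, and the case $S>0$ is genuinely used: Lemma~\ref{lem1} (hence part $(1^o)$ of Theorem~\ref{theo:main2}) invokes it for arbitrary $S\in\R$. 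Your proof leaves exactly this case open, as you yourself flag, so there is a genuine gap. Moreover, your guess about how to close it is not the mechanism the paper uses: the paper's proof never exploits $\mu>\hat{\mu}^S$ quantitatively, it works only with $0<\hat{\lambda}_\mu^S<\infty$ (this matters, because in Lemma~\ref{FFS=GS} the lemma is applied at an auxiliary level $S_1$ for which one knows $0<\hat{\lambda}_\mu^{S_1}<\infty$ but not $\mu>\hat{\mu}^{S_1}$).

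Concretely, the two missing ideas are the following. (i) \emph{Spreading for $S>0$} ($L(u_n)\to\infty$, forcing $T(u_n)\to 2S$ by your bound on $D_n$): the paper rescales $v_n(x)=u_n\bigl(xL(u_n)^{1/4}\bigr)$, so that $\lambda^S_\mu(v_n)=\lambda^S_\mu(u_n)$, $T(v_n)=T(u_n)$ stays bounded, $L(v_n)=1$, while $\|v_n\|_{L^2}^2=L(u_n)^{-1/2}\|u_n\|_{L^2}^2\to 0$; it then shows that a maximizing sequence cannot have vanishing $L^2$-norm (its value would be $\le 0$), a contradiction. (ii) \emph{Inadmissibility of the weak limit for $S>0$} ($0<T(u)<2S$, which can happen since $T$ is only weakly lower semicontinuous): by the Br\'ezis--Lieb lemma (Lemma~\ref{BL}), the lost gradient mass $2h:=\lim_n\|\nabla(u_n-u)\|_{L^2}^2$ satisfies $T(u)+2h=\lim_n T(u_n)\ge 2S$, and passing to the limit in \eqref{LCN} yields $\hat{\lambda}_\mu^S\le\lambda^{S-h}_\mu(u)$; since $S_1<S_2$ implies $\lambda^{S_1}_\mu(w)\le\lambda^{S_2}_\mu(w)$ and hence $\hat{\lambda}_\mu^{S_1}\le\hat{\lambda}_\mu^{S_2}$, one gets $\hat{\lambda}_\mu^S<\lambda^{S-h}_\mu(u)\le\hat{\lambda}_\mu^{S-h}\le\hat{\lambda}_\mu^S$, a contradiction, so the weak limit is in fact admissible. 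Without (i) and (ii) — a second dilation normalization, and Br\'ezis--Lieb combined with the monotonicity of $S\mapsto\hat{\lambda}_\mu^S$ — the statement as given (all $S\in\R$) is not proved.
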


\begin{proof}
 Consider a maximizing sequence $(u_n)_n$ of \eqref{FFP},
 $\lambda^S_\mu(u_n) \to \hat{\lambda}_\mu^S$ as $n\to+\infty$.  
We show that $(u_n)_n$ is bounded in $\Sigma$. 
\smallbreak

We first assume that $(u_n)_n$ is bounded in $L^2$,
$\|u_n\|_{L^2}\lesssim 1$ (and then show that this is necessarily the case). 
Suppose that $\|u_n\|_{L^q} \to
 +\infty$. 
 Then, in view of \eqref{HSineq},
\begin{align*}\label{Eq1}
-\lambda^S_\mu (u_n )Q(u_n)&= (T(u_n)-2S)^{1/2}
  L(u_n)^{1/2}- \mu   \frac{1}{p}A(u_n)+\frac{1}{q}B(u_n) \\ 
&\ge -\mu \frac{1}{p}\int |u_n|^p+\frac{1}{q}\int |u_n|^q \\ 
&\ge -\mu \frac{1}{p}\|u_n\|_{L^q}^{\theta p}
 +\frac{1}{q}\|u_n\|_{L^q}^q\to +\infty,
          \text{ as } \|u_n\|_{L^q} \to +\infty,  
\end{align*}
which is absurd, since it would imply that $\lambda^S_\mu (u_n )$
becomes negative for large $n$.
Hence  by \eqref{HSineq}, $\|u_n\|_{L^p}$ and $\|u_n\|_{L^q}$  are
bounded. In view of  \eqref{HSineq},
 \begin{equation*}
   \lambda_\mu^S(u_n)\le C\(\frac{\|u_n\|_{L^q}^q}{\|u_n\|_{L^2}^2}\)
     ^{\frac{p-2}{q-2}} - \frac{1}{q} \frac{\|u_n\|_{L^q}^q}{\|u_n\|_{L^2}^2}.
 \end{equation*}
This shows that $\|u_n\|_{L^q}^q/\|u_n\|_{L^2}^2$ is bounded: if we
had $\|u_n\|_{L^2}\to 0$, then we would have $\|u_n\|_{L^q}\to 0$,
hence $\|u_n\|_{L^p}\to 0$. Therefore,
\begin{equation*}
  \hat{\lambda}_\mu^S=\lim_{n\to +\infty}\frac{-1}{Q(u_n)}\left((T(u_n)-2S)^{1/2}L(u_n)^{1/2}-\mu \frac{1}{p}\|u_n\|^p_{L^p}+\frac{1}{q}\|u_n\|^q_{L^q}\right)\le 0,
\end{equation*}
in contradiction with Lemma~\ref{lem:finite}, so $\|u_n\|_{L^2}\gtrsim 1$.
We readily infer that
$(T(u_n)-2S)^{1/2}L(u_n)^{1/2}$ is bounded. Recall the inequality,
found in e.g. \cite[Lemma~4.1]{AnCaSp15}, in the two-dimensional case,
\begin{equation}\label{eq:IMRN}
  \|f\|_{L^2(\R^2)}\le C
  \|f\|_{L^p(\R^2)}^{\theta}\|xf\|_{L^2(\R^2)}^{1-\theta},\quad \theta
  =  \frac{p'}{2}\in (0,1),  \text{where }\frac{1}{p}+\frac{1}{p'}=1.
\end{equation}
This implies that we cannot have $L(u_n)\to 0$, and so $T(u_n)$ is
bounded: ($u_n)_n$ is bounded in $H^1(\R^2)$. If we had $L(u_n)\to \infty$,
then consider
\begin{equation*}
  v_n(x) :=u_n\(xL(u_n)^{1/4}\).
\end{equation*}
By the homogeneity of $\lambda_\mu^S$ with respect to dilations,
$\lambda_\mu^S(v_n)=\lambda_\mu^S(u_n)$, and $v_n $ is a maximizing
sequence. It is such that $T(v_n)=T(u_n)$ is bounded, $L(v_n)=1$, and
\begin{equation*}
  \|v_n\|_{L^2}^2 = L(u_n)^{-1/2}\|u_n\|_{L^2}^2\Tend n \infty 0,
\end{equation*}
while we have seen that this is impossible for a maximizing
sequence. Therefore, $(u_n)_n$ is bounded in $\Sigma$. 	
By the Banach–Alaoglu and Sobolev embedding theorems,  there exists a subsequence, which we again denote by $(u_n)_n$, such that
\begin{align*}
	&u_n \rightharpoonup \hat{u}_\mu^S ~~\mbox{in} ~ \Sigma\\
	&
   u_n \to \hat{u}_\mu^S  ~\mbox{in} ~~
                  L^\gamma(\mathbb{R}^2),~~2\le 
                  \gamma<\infty,\\
	&u_n \to \hat{u}_\mu^S ~~\mbox{a.e. on} ~\mathbb{R}^2,
\end{align*}
for some $\hat{u}_\mu^S \in \Sigma$, as $\Sigma$ is compactly embedded
into $L^\gamma(\R^2)$ for such values of $\gamma$ (see
e.g. \cite[Theorem~XIII.67]{ReedSimon4}).   
We show that $\hat{u}_\mu^S\neq 0$.  To this end, it is sufficient to show that the sequence $\|u_n\|_{L^p}^p$ is separated from zero. Indeed, if  $\|u_n\|_{L^p}^p \to 0$ as $n\to +\infty$, then 
\[
\hat{\lambda}_\mu^S=\lim_{n\to +\infty}\frac{-1}{Q(u_n)}\left((T(u_n)-2S)^{1/2}L(u_n)^{1/2}-\mu \frac{1}{p}\|u_n\|^p_{L^p}+\frac{1}{q}\|u_n\|^q_{L^q}\right)\le 0.
\]
This is impossible, since $\hat{\lambda}_\mu^S>0$.
From this we have
\begin{align*}
		&\lim_{n\to +\infty}\|u_n\|_{L^p}^p=\|\hat{u}_\mu^S\|_{L^p}^p,\\
	&\lim_{n\to +\infty}\|u_n\|_{L^q}^q=\|\hat{u}_\mu^S\|_{L^q}^q,\\
	& \lim_{n\to +\infty}\|u_n\|^2_{L^2}=\|\hat{u}_\mu^S\|^2_{L^2}.
\end{align*}
Recall the Br\'ezis-Lieb lemma (see e.g. \cite{LiebLoss}):
\begin{lemma}\label{BL}
Assume $(v_n)$ is bounded in $L^\gamma(\R^N)$, $1\leq \gamma<+\infty$
	and $v_n \to v$ a. e. on $\R^N$, then
	\begin{equation*}
	\lim_{n\to +\infty}\|v_n\|_{L^\gamma}^\gamma= \|v\|_{L^\gamma}^\gamma+\lim_{n\to +\infty}\|v_n-v\|_{L^\gamma}^\gamma.
	\end{equation*}
\end{lemma}
We infer
\begin{align*}
	&\lim_{n\to +\infty}\|\nabla u_n\|^2_{L^2}=\|\nabla \hat{u}_\mu^S\|^2_{L^2}+\lim_{n\to +\infty}\|\nabla (u_n-\hat{u}_\mu^S)\|^2_{L^2}\geq \|\nabla \hat{u}_\mu^S\|^2_{L^2},\label{Will1}\\
		& \bar{L}:=\lim_{n\to +\infty}\|xu_n\|^2_{L^2}=\|x\hat{u}_\mu^S\|^2_{L^2}+\lim_{n\to +\infty}\|x(u_n-\hat{u}_\mu^S)\|^2\geq \|x\hat{u}_\mu^S\|^2_{L^2}.
\end{align*}
In the case $\|\nabla \hat{u}_\mu^S\|^2_{L^2}\geq 2S$, 
\begin{align*}
	\hat{\lambda}_\mu^S&=\lim_{n\to +\infty}\lambda^S_\mu(u_n)\le \\
	&\frac{-1}{Q(\hat{u}_\mu^S)}\left((T(\hat{u}_\mu^S)-2S)^{1/2}L(\hat{u}_\mu^S)^{1/2}-\mu \frac{1}{p}\|\hat{u}_\mu^S\|_{L^p}^p+\frac{1}{q}\|\hat{u}_\mu^S\|_{L^q}^q \right)=\lambda^S_\mu(\hat{u}_\mu^S),
\end{align*}
which implies that $\hat{u}_\mu^S$ is a maximizer of \eqref{FFP}.  

Suppose,  on the contrary, that $0<\|\nabla
\hat{u}_\mu^S\|^2_{L^2}<2S$ (which might occur only if $S>0$). Then
\begin{align*}
	&\hat{\lambda}_\mu^S=\lim_{n\to +\infty}\lambda^S_\mu(u_n)< \\
&\frac{-\left((T(\hat{u}_\mu^S)+\lim_{n\to +\infty}\|\nabla (u_n-\hat{u}_\mu^S)\|^2_{L^2}-2S)^{1/2}L(\hat{u}_\mu^S)^{1/2}-\mu \frac{1}{p}\|\hat{u}_\mu^S\|_{L^p}^p+\frac{1}{q}\|\hat{u}_\mu^S\|_{L^q}^q \right)}{Q(\hat{u}_\mu^S)}.
\end{align*}
The last term is equal to $\lambda^{S-h}_\mu(\hat{u}_\mu^{S})$,
where $2h=\lim_{n\to +\infty}\|\nabla (u_n-\hat{u}_\mu^S)\|^2_{L^2}$. 
Notice that  if $S_1<S_2$, then $\lambda^{S_1}_\mu(w)\le
\lambda^{S_2}_\mu(w)$ for any $w \in \Sigma_{S_2}$, and consequently  $\hat{\lambda}_\mu^{S_1}\le \hat{\lambda}_\mu^{S_2}$. This implies the contradiction
\[
\hat{\lambda}_\mu^S<\lambda^{S-h}_\mu(\hat{u}_\mu^{S})\le \hat{\lambda}_\mu^S,
\]
and we necessarily have $\|\nabla \hat{u}_\mu^S\|^2_{L^2}\ge 2S$.
\smallbreak

Now we prove that any maximizing sequence is indeed bounded in $L^2$. 
If we had $\|u_n\|_{L^2}\to \infty$, then by the positivity of
$\hat\lambda_\mu^S$,
\begin{equation*}
  \|u_n\|_{L^p}^p\gtrsim \|u_n\|_{L^2}^2,
\end{equation*}
 and together with \eqref{HSineq}, raised to the power $p$, this yields
\begin{equation*}
  \|u_n\|_{L^2}^{2\frac{q-p}{q-2}}\|u_n\|_{L^q}^{q\frac{p-2}{q-2}}\gtrsim
  \|u_n\|_{L^2}^2,\quad\text{hence}\quad \|u_n\|_{L^q}^q\gtrsim
\|u_n\|_{L^2}^2\Tend n\infty \infty.
\end{equation*}
Again due to the positivity of $\hat\lambda_\mu^S$, for sufficiently
large $n$,
\begin{equation*}
  \|u_n\|_{L^p}^p\gtrsim \|u_n\|_{L^q}^q,
\end{equation*}
and \eqref{HSineq} now yields
\begin{equation*}
  \|u_n\|_{L^2}^2\gtrsim \|u_n\|_{L^q}^q,
\end{equation*}
so we come up with
\begin{equation*}
  \|u_n\|_{L^2}^2\approx \|u_n\|_{L^p}^p\approx \|u_n\|_{L^q}^q.
\end{equation*}
Gagliardo-Nirenberg inequality entails $T(u_n)\to \infty$.
Consider the dilation, 
\begin{equation*}
  v_n(x) = u_n\(x\|u_n\|_{L^2}\),
\end{equation*}
and recall that $\lambda_\mu^S(u_n)=\lambda_\mu^S(v_n)$. 
The sequence $(v_n)_n$ is bounded in  $L^2\cap L^q$: from the
previous arguments (where we had considered a maximizing sequence,
bounded in $L^2$), we see that $(v_n)_n$ is bounded in $H^1$. But
$T(u_n)=T(v_n)\to 
\infty$, hence a contradiction. 
\end{proof}
	From Lemmas~\ref{lem:finite} and \ref{lem11}, we get:
\begin{lemma}\label{lem1}
	Let $S\in \R$. If
        $\mu>\hat{\mu}^S$, then there exists a maximizer
  $\hat{u}_\mu^S$ of \eqref{FFP}, that is, $\hat{\lambda}_\mu^S=\lambda^S_\mu(\hat{u}_\mu^S)$.
\end{lemma}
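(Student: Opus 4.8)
The plan is simply to chain the two preceding lemmas, since all the analytic work has already been carried out. First I would invoke Lemma~\ref{lem:finite}: under the strict hypothesis $\mu>\hat{\mu}^S$, it asserts that $0<\hat{\lambda}_\mu^S<\infty$. This is precisely the point where the strictness of the inequality matters, for in the borderline case $\mu=\hat{\mu}^S$ one only obtains $\hat{\lambda}_\mu^S\ge 0$, which does not rule out the degenerate value $\hat{\lambda}_\mu^S=0$.

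Having secured the two-sided bound $0<\hat{\lambda}_\mu^S<\infty$, I would then apply Lemma~\ref{lem11}, whose hypothesis is exactly this bound. Its conclusion furnishes a function $\hat{u}_\mu^S\in\Sigma\setminus\{0\}$ with $\hat{\lambda}_\mu^S=\lambda^S_\mu(\hat{u}_\mu^S)$, that is, a maximizer of \eqref{FFP}, which completes the argument.

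I expect no genuine obstacle at this stage: the substantive difficulty lies entirely inside Lemma~\ref{lem11}, namely in showing that a maximizing sequence is bounded in $\Sigma$ (simultaneously controlling $T$, $L$ and the $L^2$ norm against both concentration and spreading, via the uncertainty-type and interpolation inequalities \eqref{eq:uncertainty}, \eqref{HSineq} and \eqref{eq:IMRN}), and then extracting a nonzero weak limit through the compact embedding of $\Sigma$ into $L^\gamma(\R^2)$ for $2\le\gamma<\infty$ together with the Br\'ezis--Lieb lemma. The positivity $\hat{\lambda}_\mu^S>0$ delivered by Lemma~\ref{lem:finite} is exactly what prevents the weak limit from vanishing, so the two lemmas interlock cleanly and the present statement follows immediately.
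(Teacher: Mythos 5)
Your proposal is correct and coincides exactly with the paper's own argument: the paper derives Lemma~\ref{lem1} directly by combining Lemma~\ref{lem:finite} (which gives $0<\hat{\lambda}_\mu^S<\infty$ precisely when $\mu>\hat{\mu}^S$) with Lemma~\ref{lem11} (which produces a maximizer under that two-sided bound). Your remarks on where the strictness of $\mu>\hat{\mu}^S$ is used and where the real analytic work lies are accurate as well.
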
	
\begin{remark}
	For the existence of a maximizer
  of \eqref{FFP} in the case $\mu=\hat{\mu}^S$ see Lemma~\ref{lem:CritM} below.
\end{remark}

\begin{lemma}\label{exFFS}
Assume that $\hat u_\mu^S$ is a maximizer of \eqref{FFPG}, i.e., $\hat{\lambda}_\mu^S=\Lambda^S_\mu(\hat{u}_\mu^S)$. Then 	$\hat{u}_\mu^S$ is a
fundamental frequency solution of \eqref{1S} with fundamental frequency $\hat{\lambda}_\mu^S$. 
\end{lemma}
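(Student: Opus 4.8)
The plan is to verify directly the two defining properties of a fundamental frequency solution from Definition~\ref{def:FFsol}: that $\hat u_\mu^S$ is a critical point of $\Lambda^S_\mu$ (equivalently, a weak solution of~\eqref{1S}), and that its value $\hat\lambda_\mu^S$ dominates $\Lambda^S_\mu(w)$ at every other critical point $w$.

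First I would locate the optimal dilation at the maximizer. For every admissible $u$ one has $\lambda^S_\mu(u)=\sup_{\sigma>0}\Lambda^S_\mu(u_\sigma)\ge \Lambda^S_\mu(u)$, while the two suprema in~\eqref{FFP} and~\eqref{FFPG} coincide and equal $\hat\lambda_\mu^S$. Hence
\[
\hat\lambda_\mu^S=\Lambda^S_\mu(\hat u_\mu^S)\le \lambda^S_\mu(\hat u_\mu^S)\le \hat\lambda_\mu^S,
\]
which forces $\Lambda^S_\mu(\hat u_\mu^S)=\lambda^S_\mu(\hat u_\mu^S)$. Were $T(\hat u_\mu^S)=2S$, then from the explicit expression~\eqref{sigmaFun} the map $\sigma\mapsto \Lambda^S_\mu((\hat u_\mu^S)_\sigma)$ would be strictly decreasing, so its supremum $\lambda^S_\mu(\hat u_\mu^S)$ would be strictly larger than its value $\Lambda^S_\mu(\hat u_\mu^S)$ at $\sigma=1$ --- a contradiction. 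Therefore $T(\hat u_\mu^S)>2S$ strictly, and the supremum over dilations is attained at $\sigma_S(\hat u_\mu^S)=1$.

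Next, since $T(\hat u_\mu^S)>2S$ is an open condition, $\hat u_\mu^S$ lies in the interior of the admissible set $\{u\in\Sigma\setminus\{0\}:\,T(u)-2S\ge 0\}$; being a maximizer of $\Lambda^S_\mu$ over that set, it is an unconstrained local maximizer of the $C^1$ functional $\Lambda^S_\mu$ on $\Sigma\setminus\{0\}$, so $D\Lambda^S_\mu(\hat u_\mu^S)=0$. By the equivalence recorded after~\eqref{eq:alRq}, $D\Lambda^S_\mu(\hat u_\mu^S)=0$ together with $\Lambda^S_\mu(\hat u_\mu^S)=\hat\lambda_\mu^S=:\lambda$ is the same as $DS_{\lambda,\mu}(\hat u_\mu^S)=0$; that is, $\hat u_\mu^S$ is a weak solution of~\eqref{1S} with frequency $\hat\lambda_\mu^S$.

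Finally, to upgrade this to a fundamental frequency solution I would compare with an arbitrary critical point $w$ of $\Lambda^S_\mu$. Each such $w$ is a weak solution of~\eqref{1S}, hence by Remark~\ref{rem:T(u)>2} satisfies $T(w)>2S$ and is therefore admissible in~\eqref{FFPG}; consequently $\Lambda^S_\mu(w)\le \hat\lambda_\mu^S=\Lambda^S_\mu(\hat u_\mu^S)$. This is precisely the maximality required in Definition~\ref{def:FFsol}, so $\hat u_\mu^S\in\F_\mu^S$ with fundamental frequency $\hat\lambda_\mu^S$. I expect the only delicate point to be the exclusion of the boundary case $T(\hat u_\mu^S)=2S$, which is what guarantees that the maximizer is a genuine interior critical point rather than a constrained one; the remaining steps are a direct dictionary between the Rayleigh-quotient formulation and~\eqref{1S}.
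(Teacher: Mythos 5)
Your proof is correct, and it takes a genuinely different route from the paper's. The paper treats \eqref{FFPG} as a constrained maximization problem and invokes the Lagrange multiplier rule: it produces multipliers $\mu_0,\mu_1$ with complementary slackness, and excludes the active-constraint case $T(\hat u_\mu^S)=2S$ by testing the stationarity identity along the dilation path, using that $\frac{d}{d\sigma}\Lambda^S_\mu(\hat u_\sigma)\big|_{\sigma=1}<0$ on that boundary while $\frac{d}{d\sigma}T(\hat u_\sigma)\equiv 0$ in dimension $2$. You avoid multipliers entirely: the sandwich $\hat\lambda^S_\mu=\Lambda^S_\mu(\hat u_\mu^S)\le\lambda^S_\mu(\hat u_\mu^S)\le\hat\lambda^S_\mu$ forces the dilation supremum to be attained at $\sigma=1$, which is impossible when $T(\hat u_\mu^S)=2S$ because there the map $\sigma\mapsto\Lambda^S_\mu\bigl((\hat u_\mu^S)_\sigma\bigr)$ is strictly decreasing (the $\sigma^{-2}$ coefficient vanishes and the term $-\sigma^2L(\hat u_\mu^S)/2$ survives). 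The underlying mechanism --- dilation invariance of $T$ when $N=2$ together with strict monotonicity of $\Lambda^S_\mu$ under dilations on the set $\{T=2S\}$ --- is the same in both arguments, but your packaging is more elementary: no constraint qualification or infinite-dimensional Lagrange rule is needed, and interior criticality then follows at once from the openness of $\{T>2S\}$. You are also more explicit than the paper on the final step: the paper simply asserts that the equality $\hat\lambda^S_\mu=\Lambda^S_\mu(\hat u_\mu^S)$ makes $\hat u_\mu^S$ a fundamental frequency solution, whereas you verify via Remark~\ref{rem:T(u)>2} that every critical point $w$ of $\Lambda^S_\mu$ satisfies $T(w)>2S$, hence is admissible in \eqref{FFPG}, so that $\Lambda^S_\mu(w)\le\hat\lambda^S_\mu$ as Definition~\ref{def:FFsol} requires; this closes a step the paper leaves implicit.
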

\begin{proof}
 Let $\hat u$ be a maximizer of \eqref{FFPG}. By the Lagrange
	multiple rule, there exist $\mu_0$, $\mu_1$ such that  
	\begin{enumerate}
	\item $|\mu_0| + |\mu_1|\neq 0$,
	\item $\mu_0 D\Lambda^S_\mu(\hat{u} ) + \mu_1 DT(\hat{u})=0,$
	\item  $\mu_1(T(\hat{u})-2S)=0$, $\mu_0\geq 0$, $\mu_1\leq 0$.
	\end{enumerate}
	Evidently, we may assume that $\mu_0=1$. If
	$T(\hat{u})-2S>0$, then by (3), we have $\mu_1 =0$, and consequently,   $D\Lambda^S_\mu(\hat{u} )=0$, and 	thus $\hat{u}$ satisfies \eqref{1S}.
	
	To obtain a contradiction, suppose that $T(\hat{u})-2S=0$.

Using the Pohozaev identity for the equality $D\Lambda^S_\mu(\hat{u} )
+ \mu_1 DT(\hat{u})=0$, we obtain

	\[	\frac{d \Lambda^S_\mu(\hat{u}_\sigma)}{d\sigma}|_{\sigma=1}+\mu_1\frac{d T(\hat{u}_\sigma)}{d\sigma}|_{\sigma=1}=0.
	\]
But $\displaystyle{\frac{d
    \Lambda^S_\mu(\hat{u}_\sigma)}{d\sigma}|_{\sigma=1}<0}$ since $T(\hat{u})-2S=0$, and  $\displaystyle{\frac{d
    T(\hat{u}_\sigma)}{d\sigma}= 0}$.  Thus, $T(\hat{u})-2S>0$ and
$\hat{u}$ satisfies \eqref{1S}.   The equality
$\hat{\lambda}_\mu^S=\Lambda^S_\mu(\hat{u})$ entails that it is  a
fundamental frequency solution.
\end{proof}
	
We can now infer:
\begin{corollary}\label{corINS}
The maximizers $\hat{u}^S_\mu$ of \eqref{FFPG}  satisfy
$T(\hat{u}^S_\mu)-2S>0$.   Up to a gauge transform,
  $\hat{u}_\mu^S$ is radially
    symmetric, nonnegative, and a 
    non-increasing function of $|x|$.
\end{corollary}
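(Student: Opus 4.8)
The first assertion is essentially already in hand. By Lemma~\ref{exFFS} a maximizer $\hat u^S_\mu$ of \eqref{FFPG} is a fundamental frequency solution of \eqref{1S}, and the proof of that lemma --- equivalently Remark~\ref{rem:T(u)>2} --- records the strict inequality $T(\hat u^S_\mu)-2S>0$. Since $\hat u^S_\mu$ solves \eqref{1S}, Corollary~\ref{cor:diffSigma} even gives the sharper identity $T(\hat u^S_\mu)-2S=L(\hat u^S_\mu)>0$, which I would keep at hand for the symmetry part.

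For the symmetry, the plan is to symmetrize and read off monotonicity from the variational characterization \eqref{FFPG}. Writing $\hat u=\hat u^S_\mu$, let $w=(|\hat u|)^*$ be the symmetric decreasing rearrangement of the modulus $|\hat u|$. I would assemble the standard rearrangement relations: equimeasurability gives $Q(w)=Q(\hat u)$, $A(w)=A(\hat u)$, $B(w)=B(\hat u)$; the pointwise diamagnetic inequality $|\nabla|\hat u||\le|\nabla\hat u|$ combined with the P\'olya--Szeg\H{o} inequality gives $T(w)\le T(\hat u)$; and, since $|x|^2$ is radially non-decreasing, rearrangement lowers the confining energy, $L(w)\le L(\hat u)$. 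Inserting these into \eqref{eq:alRq}, whose numerator contains $-\tfrac12 T-\tfrac12 L$ and whose (positive) denominator $Q$ is unchanged, yields $\Lambda^S_\mu(w)\ge\Lambda^S_\mu(\hat u)=\hat\lambda^S_\mu$. Provided $w$ is admissible, i.e. $T(w)\ge 2S$, the supremum in \eqref{FFPG} forces the reverse inequality, so $\Lambda^S_\mu(w)=\hat\lambda^S_\mu$ and $w$ is itself a maximizer; being real, nonnegative, radial and non-increasing in $|x|$, it is the sought representative, and rewriting \eqref{1S} as $-\Delta w+c(x)w=0$ with $c\in L^\infty_{\mathrm{loc}}$ lets the strong maximum principle upgrade $w\ge 0$ to $w>0$.

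The main obstacle is precisely the admissibility constraint $T(w)\ge 2S$. It is automatic when $S\le 0$, since then $T(w)\ge 0\ge 2S$; but for $S>0$ the P\'olya--Szeg\H{o} inequality may strictly lower $T$, and because $T(\hat u)-2S=L(\hat u)$ the deficit could a priori drive $T(w)$ below $2S$. I would remove this by replacing the single rearrangement with a continuous symmetric decreasing rearrangement $t\mapsto u_t$ joining $u_0=|\hat u|$ to $u_1=w$, along which $A$, $B$, $Q$ are conserved while $t\mapsto T(u_t)$ and $t\mapsto L(u_t)$ are continuous and non-increasing. If one had $T(u_1)<2S$, the intermediate value theorem would furnish $t_\ast$ with $T(u_{t_\ast})=2S$; this $u_{t_\ast}$ is admissible and satisfies $\Lambda^S_\mu(u_{t_\ast})\ge\hat\lambda^S_\mu$, hence is a maximizer with $T-2S=0$, contradicting the strict inequality of the first part. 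Thus $T(w)\ge 2S$ and the previous paragraph applies.

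Finally, to obtain the conclusion for $\hat u$ itself rather than for some maximizer, I would exploit the equality cases. Since $w$ is a maximizer, $\Lambda^S_\mu(w)=\Lambda^S_\mu(\hat u)$ forces $T(w)=T(\hat u)$ and $L(w)=L(\hat u)$; the Brothers--Ziemer equality case of the P\'olya--Szeg\H{o} inequality together with equality in the confining term then identifies $|\hat u|$ with a radial non-increasing function centered at the origin, while equality in the diamagnetic inequality gives $\hat u=e^{i\theta}|\hat u|$ for a constant phase $\theta$. The delicate points are therefore the continuous-rearrangement monotonicity for the weight $|x|^2$ and the sharpness (equality) statements, the variational inequality itself being routine.
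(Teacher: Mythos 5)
Your argument is, in substance, the paper's own: the first assertion is read off from Lemma~\ref{exFFS} (whose proof establishes $T(\hat u^S_\mu)-2S>0$), and the symmetry is obtained by Schwarz symmetrization, using that $A$, $B$, $Q$ are unchanged, that $T$ does not increase (diamagnetic plus P\'olya--Szeg\H{o}), that $L$ does not increase, and that the strict inequality $T(\hat u^S_\mu)-2S>0$ converts any strict decrease of $(T-2S)^{1/2}L^{1/2}$ into a strict increase of the quotient, contradicting maximality in \eqref{FFPG}. Where you diverge is in the closing of the equality case. The paper avoids Brothers--Ziemer entirely: it first replaces $\hat u^S_\mu$ by $e^{i\theta}\hat u^S_\mu$ to reduce to a real-valued maximizer, citing \cite[Lemma~2.3]{CiJeSe09}, and then invokes \cite[Theorem~4]{BBJV17}, which says that $L(u^*)<L(u)$ \emph{strictly} whenever $u\neq u^*$. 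With that single fact your own chain closes at once: you derived $L(w)=L(\hat u)$, so $|\hat u|=(|\hat u|)^*$, and no equality analysis of P\'olya--Szeg\H{o} is needed. Leaning on Brothers--Ziemer instead is a genuine weak point: the equality case of P\'olya--Szeg\H{o} requires a regularity hypothesis (absence of plateaus of the rearranged function) that you cannot verify a priori here, so that step, as written, does not stand on its own --- though it is removable by exactly the strict $L$-inequality that the paper cites and that you list among your ``delicate points.''

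On the admissibility constraint $T(w)\ge 2S$ for $S>0$: you have identified a point that the paper's one-line proof passes over in silence, and your intended contradiction is the right one (at $T=2S$ the quotient equals $\left(\frac{\mu}{p}A-\frac{1}{q}B\right)/Q$, which strictly exceeds $\hat\lambda^S_\mu$ because $T(\hat u)-2S=L(\hat u)>0$; note this value involves neither $T$ nor $L$, so only conservation of $A,B,Q$ along the path matters). However, the continuous-rearrangement device you propose is itself incomplete: Brock-type continuous symmetrization preserves Lebesgue norms and makes the Dirichlet integral non-increasing, but continuity of $t\mapsto T(u_t)$ is not part of the standard statements, and a monotone non-increasing function may jump across the level $2S$, which would defeat the intermediate value argument. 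So for $S>0$ your fix remains a plan rather than a proof. For $S\le 0$ --- the only case entering the ground-state and stability results --- admissibility is automatic, and there both your argument and the paper's are complete.
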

\begin{proof}
We explain the last point of the statement. In view of
\cite[Lemma~2.3]{CiJeSe09}, up to replacing $\hat{u}_\mu^S$ by
$e^{i\theta}\hat{u}_\mu^S$ for some $\theta\in \R$, we may assume that
$\hat{u}_\mu^S$ is real-valued. 
Using Schwarz symmetrization (see
e.g. \cite[Chapter~3]{LiebLoss}),  we see that maximizers can be chosen as
radially symmetric, nonnegative, and non-increasing as a function of
$|x|$. It is indeed standard that symmetric rearrangements leave Lebesgue norms unchanged, and do not
increase $T(u)$. They do not increase $L(u)$ either, as shown in
\cite[Appendix~A]{BBJV17}, and more precisely, in the present case
of a harmonic potential $|x|^2$,  \cite[Theorem~4]{BBJV17} asserts
that if $u$ is not equal to its Schwarz symmetrization $u^*$, then
$L(u^*)<L(u)$. Since maximizers satisfy  $T(\hat{u}^S_\mu)>2S$, we see
that if they do not satisfy the announced properties, then considering
their Schwarz symmetrization (strictly) increases the value of
$\lambda^S_\mu$, hence a contradiction.
\end{proof}

%%%%%%%%%%%%%%%%%%%%%%%%%%%%%%%%%%%%%%%%%%%%%%%%%%%%%

\section{Lyapunov stability of the set of fundamental frequency solutions}\label{sec:stability}

We first show why local $\Sigma$ solutions of \eqref{Sch} are
global. In view of e.g. \cite[Theorem~9.2.6]{CazCourant}, it suffices
to prove that any solutions is bounded in $\Sigma$.  For the following
result, we consider the general case $N\ge 1$, with combined
energy-subcritical nonlinearities.

\begin{lemma}\label{lemBOUND}
Let $N\ge 1$. Assume $2<p<q<2^*$ and $\mu>0$. Let $\psi\in
C([0,T);\Sigma)\cap C^1([0,T);\Sigma^*)$ be a solution  of
\eqref{Sch}. Then $\psi$ is  bounded in $\Sigma$: there exists $C$
depending on $\|\psi_{\mid t=0}\|_{\Sigma}$ 
such that for all $t\in [0,T)$, $\|\nabla
\psi(t)\|^2_{L^2}+\|x\psi(t)\|^2_{L^2}\le C$, and therefore we can take
$T=\infty$. 
\end{lemma}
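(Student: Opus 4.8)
The plan is to combine the two conservation laws — energy $E=H_\mu(\psi(t))$ and mass $Q(\psi(t))$ — with the fact that the defocusing nonlinearity, which carries the \emph{larger} power $q$, dominates the focusing one, which carries the \emph{smaller} power $p$. First I would recall that for $\Sigma$-solutions both $H_\mu$ and $Q$ are independent of $t$ (standard, see the references to \cite{CazCourant}); in particular the squared $L^2$ norm $\|\psi(t)\|_{L^2}^2=2Q(\psi_{\mid t=0})$ is a fixed constant.

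The key elementary estimate is a pointwise Young--interpolation inequality: since $2<p<q$, for every $\delta>0$ there is $C_\delta>0$ such that $a^p\le \delta a^q+C_\delta a^2$ for all $a\ge 0$. Indeed, this is equivalent to $a^{p-2}\le \delta a^{q-2}+C_\delta$, which holds for large $a$ because $a^{p-2}/a^{q-2}=a^{p-q}\to 0$ as $a\to\infty$, and trivially for bounded $a$. Integrating over $\R^N$ with $a=|\psi(t,x)|$ yields
\[
\frac{\mu}{p}\|\psi(t)\|_{L^p}^p\le \delta\,\|\psi(t)\|_{L^q}^q+C_\delta'\,\|\psi(t)\|_{L^2}^2,
\]
where $C_\delta'$ absorbs $\mu$, $p$ and $C_\delta$.

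Next I would rewrite the conserved energy in the form
\[
\frac{1}{2}\|\nabla\psi(t)\|_{L^2}^2+\frac{1}{2}\|x\psi(t)\|_{L^2}^2+\frac{1}{q}\|\psi(t)\|_{L^q}^q=E+\frac{\mu}{p}\|\psi(t)\|_{L^p}^p,
\]
and insert the preceding bound on the right-hand side. Choosing $\delta=\frac{1}{2q}$ lets me absorb the term $\delta\|\psi(t)\|_{L^q}^q$ into the $\frac{1}{q}\|\psi(t)\|_{L^q}^q$ sitting on the left, leaving
\[
\frac{1}{2}\|\nabla\psi(t)\|_{L^2}^2+\frac{1}{2}\|x\psi(t)\|_{L^2}^2+\frac{1}{2q}\|\psi(t)\|_{L^q}^q\le E+C_\delta'\,\|\psi(t)\|_{L^2}^2.
\]
Since the right-hand side is a fixed constant (both $E$ and $\|\psi(t)\|_{L^2}^2$ being conserved), I obtain a uniform bound $\|\nabla\psi(t)\|_{L^2}^2+\|x\psi(t)\|_{L^2}^2\le C$ with $C$ depending only on $\|\psi_{\mid t=0}\|_{\Sigma}$. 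Global existence, i.e.\ $T=\infty$, then follows from the blow-up alternative of \cite[Theorem~9.2.6]{CazCourant} quoted before the statement.

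The argument has no genuine obstacle: the only place where the hypotheses are essential is that the larger exponent $q$ comes with the defocusing sign, so that the favorably-signed $L^q$ term in the energy controls the focusing $L^p$ contribution after absorption. Were the focusing power the larger one, this step would fail and finite-time blow-up could occur, as recalled in the introduction for $2+\frac{4}{N}<p$. The energy-subcriticality $q<2^*$ is used only to guarantee that the Lebesgue norms involved are finite and that the local Cauchy theory in $\Sigma$ applies.
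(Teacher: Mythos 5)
Your proof is correct, and it follows the same skeleton as the paper's: combine conservation of mass and energy with the fact that the defocusing $L^q$ term dominates the focusing $L^p$ term because $p<q$. The difference lies in the key inequality and how the domination is exploited. The paper uses the H\"older interpolation \eqref{HSineq}, $\|u\|_{L^p}\le\|u\|_{L^2}^{1-\theta}\|u\|_{L^q}^{\theta}$ with $\theta=\frac{q(p-2)}{p(q-2)}$, combines it with the conservation of the $L^2$ norm to get $\|\psi(t)\|_{L^p}\lesssim\|\psi(t)\|_{L^q}^{\theta}$, and then concludes by coercivity: since $\theta p<q$, the quantity $-C\|\psi(t)\|_{L^q}^{\theta p}+\frac{1}{q}\|\psi(t)\|_{L^q}^{q}$ is bounded below, so the conserved energy controls every term. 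You instead use the pointwise Young-type inequality $a^p\le\delta a^q+C_\delta a^2$, integrate, and absorb the small $\delta\|\psi(t)\|_{L^q}^q$ contribution into the good $\frac{1}{q}\|\psi(t)\|_{L^q}^q$ term on the favorable side of the energy identity. The two mechanisms are equivalent (your pointwise inequality is exactly H\"older-plus-Young in disguise, since $\frac{q-p}{q-2}+\frac{p-2}{q-2}=1$), and they yield the same bound with the same dependence on $\|\psi_{\mid t=0}\|_{\Sigma}$. What your route buys is self-containedness: it avoids invoking H\"older and the Sobolev embedding at this step and makes the absorption completely explicit. What the paper's route buys is economy within the article: inequality \eqref{HSineq} is introduced once and reused several times (e.g., in Lemmas~\ref{lem:finite} and \ref{lem11}), so the proof of Lemma~\ref{lemBOUND} comes almost for free from machinery already in place. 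Your concluding appeal to the blow-up alternative of \cite[Theorem~9.2.6]{CazCourant} to pass from the a priori bound to $T=\infty$ matches the paper's reduction stated just before the lemma.
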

\begin{proof}
  The conservations for \eqref{Sch} yield $Q(\psi(t))=Q(\psi(0))$,
  $H_\mu(\psi(t))=H_\mu(\psi(0))$ for all $t\in [0,T)$. Recall that from \eqref{HSineq},
  there exists $\theta\in ]0,1[$ such that
  \begin{equation*}
    \|\psi(t)\|_{L^p}\le \|\psi(t)\|^{1-\theta}_{L^2}\|\psi(t)\|_{L^q}^\theta
    \lesssim \|\psi(t)\|_{L^q}^\theta,
  \end{equation*}
  since the $L^2$-norm of $\psi$ is independent of time. We infer
  \begin{equation*}
    H_\mu(\psi(0))=H_\mu(\psi(t)) \ge \frac{1}{2}\|\nabla \psi(t)\|_{L^2}^2 +
    \frac{1}{2}\|x  \psi(t)\|_{L^2}^2 -C \|\psi(t)\|_{L^q}^{\theta p} +
    \frac{1}{q} \|\psi(t)\|_{L^q}^q .
  \end{equation*}
  Now from \eqref{HSineq}, $\theta p<q$: this entails that all terms
  in the above inequality are bounded. 
\end{proof}

We now prove (recall that $\F^S_\mu$ is defined in Definition~\ref{def:FFsol}):

\begin{lemma}\label{lemOrbStabFFS}
 Let $N=2$, $2<p<q<\infty$, $S\le 0$, and $\mu>0$ such that $\F^S_\mu\not=\emptyset$.
 Then the set of fundamental frequency solutions $\F^S_\mu$  is
 orbitally stable.
\end{lemma}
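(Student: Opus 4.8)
The plan is to argue by contradiction through a concentration--compactness argument, exploiting that $\Lambda^S_\mu$ is conserved along the flow and that, since $S\le 0$, the constraint $T(u)\ge 2S$ in \eqref{FFPG} is automatically satisfied (indeed $T(u)\ge 0\ge 2S$). Thus $\F^S_\mu$ is exactly the set of maximizers of $\Lambda^S_\mu$ over $\Sigma\setminus\{0\}$, all sharing the common value $\Lambda^S_\mu=\hat\lambda^S_\mu$; equivalently, by \eqref{conservL}, every $\phi\in\F^S_\mu$ satisfies $H_\mu(\phi)=S-\hat\lambda^S_\mu\,Q(\phi)$. Because both $H_\mu$ and $Q$ are conserved by \eqref{Sch}, the Rayleigh quotient $\Lambda^S_\mu(\psi(t))$ is itself a conserved quantity, and by Lemma~\ref{lemBOUND} every $\Sigma$-solution is global and stays bounded in $\Sigma$ in terms of its initial $\Sigma$-norm. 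Before running the argument I would record, as in the proof of Lemma~\ref{lem11}, that $\F^S_\mu$ is bounded in $\Sigma$ with $\inf_{\phi\in\F^S_\mu}Q(\phi)>0$; the estimates establishing this are exactly those used there to bound maximizing sequences.

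Negating orbital stability, I would obtain $\varepsilon_0>0$, initial data $\psi_{0,n}\in\Sigma$ and times $t_n$ with $\inf_{\phi\in\F^S_\mu}\|\psi_{0,n}-\phi\|_\Sigma\to 0$ while $w_n:=\psi_n(t_n)$ satisfies $\inf_{\phi\in\F^S_\mu}\|w_n-\phi\|_\Sigma\ge\varepsilon_0$. Choosing $\phi_n\in\F^S_\mu$ with $\|\psi_{0,n}-\phi_n\|_\Sigma\to 0$, the continuity of $Q$ and $H_\mu$ on $\Sigma$ (the nonlinear terms being continuous by the Sobolev embeddings used in \eqref{HSineq}) together with the conservation laws give $Q(w_n)=Q(\phi_n)+o(1)$ and $H_\mu(w_n)=H_\mu(\phi_n)+o(1)$. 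Since $H_\mu(\phi_n)=S-\hat\lambda^S_\mu Q(\phi_n)$ and $Q(\phi_n)$ stays in a compact subset of $(0,\infty)$, this forces $\Lambda^S_\mu(w_n)\to\hat\lambda^S_\mu$: the sequence $(w_n)_n$ is a maximizing sequence for \eqref{FFPG}. Moreover $\|\psi_{0,n}\|_\Sigma$ is bounded (since $\phi_n$ is), so by Lemma~\ref{lemBOUND} the whole family $(w_n)_n$ is bounded in $\Sigma$.

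It then remains to prove the compactness statement: a bounded maximizing sequence with $Q$ bounded away from $0$ converges, up to a subsequence, to an element of $\F^S_\mu$. Passing to a subsequence, $w_n\rightharpoonup\hat u$ in $\Sigma$, and by the compact embedding $\Sigma\hookrightarrow L^\gamma(\R^2)$ for $2\le\gamma<\infty$ one has $A(w_n)\to A(\hat u)$, $B(w_n)\to B(\hat u)$ and $Q(w_n)\to Q(\hat u)$; in particular $Q(\hat u)>0$, so $\hat u\neq 0$. Using the weak lower semicontinuity of $T$ and $L$ in the numerator of \eqref{eq:alRq}, exactly as in Lemma~\ref{lem11}, one gets $\Lambda^S_\mu(\hat u)\ge\limsup_n\Lambda^S_\mu(w_n)=\hat\lambda^S_\mu$, whence $\hat u$ is a maximizer and $\hat u\in\F^S_\mu$ (the maximum being interior since $T(\hat u)>0\ge 2S$, so $\hat u$ is a free critical point of $\Lambda^S_\mu$). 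The equality $\Lambda^S_\mu(\hat u)=\hat\lambda^S_\mu=\lim_n\Lambda^S_\mu(w_n)$ combined with the convergence of $A,B,Q$ forces $T(w_n)+L(w_n)\to T(\hat u)+L(\hat u)$; together with the two separate lower-semicontinuity inequalities this yields $T(w_n)\to T(\hat u)$ and $L(w_n)\to L(\hat u)$. Finally, $\|\nabla w_n\|_{L^2}\to\|\nabla\hat u\|_{L^2}$ and $\|xw_n\|_{L^2}\to\|x\hat u\|_{L^2}$ combined with weak convergence upgrade to strong convergence of $\nabla w_n$ and $xw_n$ in $L^2$, and with the $L^2$-convergence this gives $w_n\to\hat u$ strongly in $\Sigma$. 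Then $\inf_{\phi\in\F^S_\mu}\|w_n-\phi\|_\Sigma\le\|w_n-\hat u\|_\Sigma\to 0$, contradicting $\ge\varepsilon_0$.

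The main obstacle is precisely this last compactness step, namely upgrading weak to strong convergence for the non-compact quantities $T$ and $L$. It works only because these ``bad'' terms enter the numerator of $\Lambda^S_\mu$ with a sign compatible with weak lower semicontinuity, so that maximality squeezes the inequalities to equalities; this is the same mechanism as in Lemma~\ref{lem11}, and the standing assumption $S\le 0$ is what removes the constraint $T\ge 2S$ and places the maximizer in the interior. A secondary point requiring care, which I would settle first, is the uniform control $0<\inf_{\F^S_\mu}Q\le\sup_{\F^S_\mu}Q<\infty$ together with the $\Sigma$-boundedness of $\F^S_\mu$, ensuring both that $\psi_{0,n}$ is bounded (so that Lemma~\ref{lemBOUND} applies to $w_n$) and that $\Lambda^S_\mu(w_n)\to\hat\lambda^S_\mu$.
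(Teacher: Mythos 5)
Your proposal is correct and follows essentially the same route as the paper: argue by contradiction, use that $\Lambda^S_\mu$ is conserved because $H_\mu$ and $Q$ are, invoke Lemma~\ref{lemBOUND} for $\Sigma$-boundedness, and then run the compactness argument of Lemma~\ref{lem11} on the maximizing sequence $\psi_n(t_n)$ (with $T(u)\ge 0\ge 2S$ keeping the constraint inactive) to extract a strong $\Sigma$-limit in $\F^S_\mu$. The only difference is that you spell out explicitly the steps the paper delegates to ``proceed like in the proof of Lemma~\ref{lem11}'' (the lower bound on $Q$ over $\F^S_\mu$ and the upgrade from convergence of $T+L$ to strong $\Sigma$-convergence), both of which rest on exactly the estimates used there.
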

\begin{proof}  Suppose that $\F^S_\mu$ is not orbitally stable:
  there exist
  $\varepsilon>0$, a sequence of initial data $\psi_n(0)\in \Sigma$  such that
  \begin{equation}
    \label{eq:CVCI0}
    \inf_{\phi \in \F^S_\mu}\|\psi_n(0)-\phi\|_{\Sigma}\Tend n \infty 0,
  \end{equation}
  and a sequence of times $t_n\ge 0$ along which the solution to
 \eqref{Sch} emanating from $\psi_n(0)$ satisfies
\begin{equation}\label{UNconver}
	 \inf_{\phi \in \F^S_\mu}\|\psi_n(t_n)-\phi\|_{\Sigma} >
         \varepsilon,\quad \forall n\ge 1.
\end{equation}
The convergence \eqref{eq:CVCI0} implies that 
\[
\Lambda_\mu^S(\psi_n(0)) \Tend n \infty \hat{\lambda}_\mu^S,
\]
and therefore  the conservation for $\Lambda^S_\mu$ yields 
\begin{equation}\label{eq:tend}
	\Lambda_\mu^S(\psi_n(t_n)) \Tend n \infty \hat{\lambda}_\mu^S .
\end{equation}
Lemma~\ref{lemBOUND} shows that $(\psi_n(t_n))_n$ is bounded in
$\Sigma$. Consequently, 
there exists a subsequence which we again denote by $(\psi_n(t_n))_n$ such that
\[
\psi_n(t_n)\rightharpoonup u~~\mbox{weakly in }\Sigma,
~~\psi_n(t_n)\to  u,~~\mbox{strongly in }L^p(\R^2), \quad 2\le p<\infty.
\]
for some $u \in \Sigma$. Observe that $T(u)-2S\geq 0$ since $S\le 0$. This and  \eqref{eq:tend} imply that we can 
proceed like in the proof of Lemma~\ref{lem11}, to obtain that we actually
have $\psi_n(t_n)\to u$ strongly in $\Sigma$,   
% $T(\psi_n(t_n))-2S\ge 0$, 
and 
\[
\lambda_\mu^S(\psi_n(t_n)) \Tend n \infty \hat{\lambda}_\mu^S.
\]
Hence 
 we get  $\Lambda_\mu^S(u)=\hat{\lambda}_\mu^S$, that is $u\in
 \F^S_\mu$. But this contradicts  \eqref{UNconver}.      
\end{proof}

%%%%%%%%%%%%%%%%%%%%%%%%%%%%%%%%%%%%%%%%%%%%%%%%%%%%%%%

\section{Existence of a minimizer of $\mu^S(u)$}
Assume that $S\leq 0$. Notice that in this case we always have
 $\Sigma_S=\Sigma$. 
 Consider the minimization problem \eqref{eq:hat-mu}, i.e.
\[
	\hat{\mu}^S:=\inf_{u \in \Sigma_S\setminus \{0\}}\mu^S(u).
\]

\begin{lemma}\label{lem:monot}
If $S\le 0$, then there exists a minimizer
  $\bar{v}^S$ of \eqref{eq:hat-mu}. 	
\end{lemma}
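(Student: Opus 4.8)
The plan is to apply the direct method of the calculus of variations to the quotient $\mu^S$, exploiting its dilation invariance exactly as in the proof of Lemma~\ref{lem11}. Recall that $\hat{\mu}^S=\inf_{u\in\Sigma\setminus\{0\}}\mu^S(u)=\inf_{u\in\Sigma\setminus\{0\}}M^S(u)$ and that $\mu^S(u_\sigma)=\mu^S(u)$ for every $\sigma>0$. First I would fix a minimizing sequence $(u_n)_n\subset\Sigma\setminus\{0\}$ with $\mu^S(u_n)\to\hat{\mu}^S$, and use dilation invariance to normalize $L(u_n)=1$ for all $n$. Since $S\le 0$, Proposition~\ref{prop:posit.bound} gives $0<\hat{\mu}^S<\infty$, so along the sequence $\mu^S(u_n)$ is bounded and bounded away from $0$; writing $\mu^S(u_n)=\bigl((T(u_n)-2S)^{1/2}+\tfrac1q B(u_n)\bigr)/(\tfrac1p A(u_n))$ with $L(u_n)=1$ then forces $A(u_n)\approx (T(u_n)-2S)^{1/2}+B(u_n)$, and in particular $A(u_n)\gtrsim (T(u_n)-2S)^{1/2}$ and $A(u_n)\gtrsim B(u_n)$.

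The main obstacle is to show that $(u_n)_n$ is bounded in $\Sigma$, i.e. that $T(u_n)$ stays bounded: the naive power counting is exactly borderline because of the two-dimensional scaling criticality, just as for the maximization problem in Lemma~\ref{lem11}. The device that breaks the tie is the weighted inequality \eqref{eq:IMRN}. With $L(u_n)=1$ it gives $\|u_n\|_{L^2}^2\lesssim A(u_n)^{2\theta/p}$ for the exponent $\theta=p'/2$ of \eqref{eq:IMRN}, and combining this with the H\"older interpolation \eqref{HSineq} (raised to the power $p$) yields, after a routine simplification of the exponents, $B(u_n)\gtrsim A(u_n)^{(q-1)/(p-1)}$. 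Suppose $T(u_n)\to\infty$; then $A(u_n)\to\infty$ from $A(u_n)\gtrsim (T(u_n)-2S)^{1/2}$, and since $(q-1)/(p-1)>1$ we obtain $A(u_n)\ll B(u_n)$, contradicting $A(u_n)\gtrsim B(u_n)$. Hence $T(u_n)$ is bounded, and then so is the full $\Sigma$-norm, because \eqref{eq:uncertainty} gives $\|u_n\|_{L^2}^2\le T(u_n)^{1/2}L(u_n)^{1/2}$.

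Once boundedness is secured, I would extract by Banach--Alaoglu and the compact embedding $\Sigma\hookrightarrow L^\gamma(\R^2)$ ($2\le\gamma<\infty$, see \cite[Theorem~XIII.67]{ReedSimon4}) a subsequence with $u_n\rightharpoonup \bar{v}^S$ in $\Sigma$, $u_n\to\bar{v}^S$ in $L^p\cap L^q$ and a.e. Next I would rule out $\bar{v}^S=0$: if it vanished, then $A(u_n),B(u_n)\to 0$, so the numerator of $\mu^S(u_n)$ would tend to $0$. For $S<0$ this is impossible, since that numerator is $\ge(-2S)^{1/2}>0$; for $S=0$ it forces $T(u_n)\to 0$, which together with the two-dimensional Gagliardo--Nirenberg bound $A(u_n)\lesssim T(u_n)^{(p-1)/2}$ (again using $L(u_n)=1$ and \eqref{eq:uncertainty}) gives $\mu^0(u_n)\gtrsim T(u_n)^{(2-p)/2}\to\infty$, a contradiction. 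Hence $\bar{v}^S\neq 0$ and $A(\bar{v}^S)>0$.

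Finally I would pass to the limit. The denominator converges, $A(u_n)\to A(\bar{v}^S)>0$, and $B(u_n)\to B(\bar{v}^S)$, while $T$ and $L$ are weakly lower semicontinuous; since $\liminf(a_nb_n)\ge(\liminf a_n)(\liminf b_n)$ for nonnegative sequences, the numerator $(T-2S)^{1/2}L^{1/2}+\tfrac1q B$ is weakly lower semicontinuous as well. This yields $\mu^S(\bar{v}^S)\le\liminf_n\mu^S(u_n)=\hat{\mu}^S$, and since $\bar{v}^S\in\Sigma_S=\Sigma\setminus\{0\}$ the reverse inequality $\mu^S(\bar{v}^S)\ge\hat{\mu}^S$ holds by definition of the infimum; therefore $\bar{v}^S$ is the desired minimizer. (Equivalently, one may pass to the limit in $M^S$, whose numerator is a plain sum and is manifestly weakly lower semicontinuous.) As emphasized, the only genuinely delicate point is the $\Sigma$-boundedness of the minimizing sequence, and it is resolved by the weighted inequality \eqref{eq:IMRN}.
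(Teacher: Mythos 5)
Your proof is correct, and its core -- the $\Sigma$-boundedness of the minimizing sequence -- is exactly the paper's argument: normalize $L(u_n)=1$ by dilation invariance, extract $(T-2S)^{1/2}\lesssim A$ and $B\lesssim A$ from the boundedness of $\mu^S(u_n)$, and combine \eqref{HSineq} with the weighted inequality \eqref{eq:IMRN} to get $A^{(q-1)/(p-1)}\lesssim B\lesssim A$, which (since $\tfrac{q-1}{p-1}>1$) kills the possibility $T(u_n)\to\infty$; the extraction via compact embedding and the weak-lower-semicontinuity passage to the limit are also identical to the paper's. Where you genuinely diverge is the nonvanishing step. The paper shows $\bar{v}^S\neq 0$ by reusing the lower bound from the proof of Proposition~\ref{prop:posit.bound}, i.e. $\mu^S(v_n)\ge \mu^0(t_m(v_n)v_n)$ as in \eqref{eq:M^s}, and then invoking the invariance of $\mu^0(t_m(u)u)$ under $u\mapsto s u_\sigma$ to renormalize so that $T(v_n)L(v_n)=1$ and $B(v_n)=1$, whence $A(v_n)\to 0$ would force $\mu^S(v_n)\to\infty$; this requires switching to a second normalization, incompatible with $L(v_n)=1$, in the middle of the proof. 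Your argument instead stays with the original sequence: if $\bar{v}^S=0$ then $A(u_n)\to 0$ with $\mu^S(u_n)$ bounded forces the numerator to vanish, which is immediately absurd for $S<0$ (the numerator dominates $(-2S)^{1/2}$), and for $S=0$ contradicts $\hat{\mu}^0<\infty$ via the two-dimensional Gagliardo--Nirenberg inequality combined with \eqref{eq:uncertainty}, which give $A(u_n)\lesssim T(u_n)^{(p-1)/2}$ and hence $\mu^0(u_n)\gtrsim T(u_n)^{(2-p)/2}\to\infty$. This is a self-contained and arguably cleaner treatment of that step, at the modest cost of splitting into the cases $S<0$ and $S=0$ and of importing Gagliardo--Nirenberg (which the paper also uses, in Lemma~\ref{lem11}); your use of \eqref{eq:uncertainty} in place of \eqref{eq:IMRN} for the $L^2$ bound is an equally valid shortcut.
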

\begin{proof} By Proposition~\ref{prop:posit.bound},   $0<\hat{\mu}_S<\infty$.
Let $(v_n)_n$ be a minimizing sequence  of \eqref{eq:hat-mu}, i.e.,
 $\mu^S(v_n) \to \hat{\mu}^S$ as $n\to\infty$ and $v_n \in \Sigma_S$,
 $n\ge 1$.  
We show that $(v_n)_n$ is bounded in $\Sigma$. 
\smallbreak	
Since $\mu^S(v)= \mu^S(v_\si)$ for any dilation
parameter $\sigma>0$,
we may assume $L(v_n)=1$. Since $\mu^S(v_n)$ is bounded, we readily have
\begin{align}
  &0\le \|\nabla v_n\|_{L^2}^2-2S\lesssim
    \|v_n\|_{L^p}^{2p} ,\label{eq:b1}\\
  & \|v_n\|_{L^q}^q\lesssim \|v_n\|_{L^p}^p.\label{eq:b2}
\end{align}
In view of \eqref{HSineq} and \eqref{eq:IMRN},
\begin{equation*}
  \|v_n\|_{L^p}^p\lesssim
  \(\|v_n\|_{L^p}^{p'}\)^{\frac{q-p}{q-2}}\(\|v_n\|_{L^q}^q\)^{\frac{p-2}{q-2}} ,
\end{equation*}
hence
\begin{equation*}
  \(\|v_n\|_{L^p}^p\)^{\frac{q-1}{p-1}}\lesssim
  \|v_n\|_{L^q}^q\lesssim \|v_n\|_{L^p}^p, 
\end{equation*}
where we have used \eqref{eq:b2} for the last inequality. As $2<p<q$,
we infer that $(v_n)_n$ in bounded in $L^p(\R^2)$, and in $L^q(\R^2)$,
again from \eqref{eq:b2}. Now, \eqref{eq:IMRN}  implies that $(v_n)_n$
in bounded in $L^2(\R^2)$, and \eqref{eq:b1} that $(\nabla v_n)_n$
in bounded in $L^2(\R^2)$. Therefore, $(v_n)_n$ is bounded in
$\Sigma$.

Like in the proof of Lemma~\ref{lem11},  there exists a subsequence,
which we again denote by $(v_n)_n$, and  $\bar{v}^S \in \Sigma$, such that
\begin{align*}
	&v_n \rightharpoonup \bar{v}^S ~~\mbox{in} ~ \Sigma\\
	&
   v_n \to \bar{v}^S  ~\mbox{in} ~~
                  L^\gamma(\mathbb{R}^2),~~2\le 
                  \gamma<\infty,\\
	&v_n \to \bar{v}^S ~~\mbox{a.e. on} ~\mathbb{R}^2.
\end{align*}
Observe that $\bar{v}^S \neq 0$. Indeed, by the proof of
Proposition~\ref{prop:posit.bound}, we have

\begin{equation}\label{eq:M^s}
	\mu^S(v_n)\geq \mu^0(t_m(v_n)v_n)=
C_{p,q}
\frac{\(T(v_n)+L(v_n)\)^{\frac{q-p}{q-2}}B(v_n)^{\frac{p-2}{q-2}}}{ A(v_n)},
\end{equation}
where $t_m$ and $C_{p,q}$ were computed in the proof of
Proposition~\ref{prop:posit.bound}.\\

Since $\mu^0(t_m(su_\sigma)su_\sigma)=\mu^0(t_m(u)u)$ for any $\sigma>0$,
$s>0$, $u \in \Sigma\setminus \{0\}$, we may assume that
$T(v_n)L(v_n)=1$, $B(v_n)=1$, for all $n$. Hence if $\bar{v}^S= 0$, we get a contradiction: $\mu^S(v_n)\to +\infty$. 

Now arguing as in the proof of Lemma~\ref{lem11} we obtain
\begin{align*}
	\hat{\mu}^S&=\lim_{n\to +\infty}\mu^S(v_n)\ge \\
	&\frac{1}{A(\bar{v}^S)}\left(\(T(\bar{v}^S)-2S\)^{1/2}L(\bar{v}^S)^{1/2} +\frac{1}{q}B(\bar{v}^S) \right)=\mu^S(\bar{v}^S),
\end{align*}
which implies that $\bar{v}^S$ is a minimizer of \eqref{eq:hat-mu}.  
\end{proof}

\medskip
Observing that  since $\mu^S(u)$ is a homogeneous functional  with respect to spatial dilations, we may always  assume that the minimizer
  $\bar{v}^S$ of \eqref{eq:hat-mu} satisfies $\sigma_S(\bar{v}^S)=1$.
	
\begin{lemma} \label{lem:CritM}
Let $S\leq 0$ and   
  $\bar{v}^S$ be a minimizer \eqref{eq:hat-mu} such that $\sigma_S(\bar{v}^S)=1$. Then  
	\begin{description}
		\item[(i)] $\bar{v}^S$ weakly satisfies \eqref{1S} with 	$\lambda=0$ and $\mu= \hat{\mu}^S$;
		\item[(ii)]  maximizing problem \eqref{FFP} for $\mu= \hat{\mu}^S$ attains its maximizer at $\bar{v}^S$, i.e., $\hat{u}_{\hat{\mu}^S}^S=\bar{v}^S$,   and $\hat{\lambda}^S_{\hat{\mu}^S}=\lambda^S_{\hat{\mu}^S}(\bar{v}^S)=0$.
	\end{description}
\end{lemma}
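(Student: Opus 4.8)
The plan is to verify the three hypotheses of Lemma~\ref{CritR} for $u=\bar v^S$ at the values $\lambda=0$ and $\mu=\hat\mu^S$; part~$(i)$ then follows at once, and part~$(ii)$ is obtained by matching the resulting value against the bounds already known for $\hat\lambda^S_{\hat\mu^S}$.

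For $(i)$, I would first note that since $S\le 0$ we have $\Sigma_S=\Sigma\setminus\{0\}$, which is open in $\Sigma$, and that $\mu^S$ is differentiable there (both $T(u)-2S$ and the denominator $\frac1p A(u)$ are strictly positive on $\Sigma_S$). As $\bar v^S\neq 0$ (Lemma~\ref{lem:monot}) is an interior minimizer of $\mu^S$, it is an unconstrained critical point, so $D\mu^S(\bar v^S)=0$. The hypothesis gives $\sigma_S(\bar v^S)=1$. Finally, the characterization $\mu^S(u)=\mu\Leftrightarrow\lambda^S_\mu(u)=0$ recorded just after the definition of $\mu^S$, applied with $\mu=\hat\mu^S=\mu^S(\bar v^S)$, yields $\lambda^S_{\hat\mu^S}(\bar v^S)=0$.

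The crux is to upgrade $D\mu^S(\bar v^S)=0$ to $D\lambda^S_{\hat\mu^S}(\bar v^S)=0$, and this is where the affine dependence of $\lambda^S_\mu$ on $\mu$ is essential (it is the second application of the NG-Rayleigh quotient differentiation, cf.\ \cite{CarlIlSan}). Writing, for $u\in\Sigma_S$,
\[
F(u,\mu):=(T(u)-2S)^{1/2}L(u)^{1/2}-\frac{\mu}{p}A(u)+\frac1q B(u),
\]
we have $\lambda^S_\mu(u)=-F(u,\mu)/Q(u)$ by \eqref{LCN}, and the definition of $\mu^S$ amounts to the identity $F(u,\mu^S(u))\equiv 0$ on $\Sigma_S$. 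Differentiating this identity in $u$ and using $\partial_\mu F=-\frac1p A(u)$ gives $D_uF(u,\mu^S(u))=\frac1p A(u)\,D\mu^S(u)$, whose right-hand side vanishes at $u=\bar v^S$. Since moreover $F(\bar v^S,\hat\mu^S)=0$ (equivalently $\lambda^S_{\hat\mu^S}(\bar v^S)=0$), differentiating $\lambda^S_\mu=-F/Q$ at $\bar v^S$ leaves only $D\lambda^S_{\hat\mu^S}(\bar v^S)=-D_uF(\bar v^S,\hat\mu^S)/Q(\bar v^S)=0$. With $\sigma_S(\bar v^S)=1$, $\lambda^S_{\hat\mu^S}(\bar v^S)=0$ and $D\lambda^S_{\hat\mu^S}(\bar v^S)=0$ all established, Lemma~\ref{CritR} shows that $\bar v^S$ weakly solves \eqref{1S} with $\lambda=0$ and $\mu=\hat\mu^S$, proving $(i)$.

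For $(ii)$, I would exploit that for $S\le 0$ one has $T(u)-2S>0$ for every $u\in\Sigma\setminus\{0\}$, so the constraint in \eqref{FFP} is automatically satisfied and $\hat\lambda^S_{\hat\mu^S}=\sup_{u\neq 0}\lambda^S_{\hat\mu^S}(u)$. Proposition~\ref{prop:2} applied with $\mu=\hat\mu^S$ gives $\lambda^S_{\hat\mu^S}(u)\le 0$ for all $u\in\Sigma_S$, hence $\hat\lambda^S_{\hat\mu^S}\le 0$; together with $\hat\lambda^S_{\hat\mu^S}\ge 0$ from Lemma~\ref{lem:finite} this forces $\hat\lambda^S_{\hat\mu^S}=0$. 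Since the previous step gives $\lambda^S_{\hat\mu^S}(\bar v^S)=0=\hat\lambda^S_{\hat\mu^S}$ and $\bar v^S$ is admissible, $\bar v^S$ attains the supremum in \eqref{FFP}; thus it is a maximizer and we may take $\hat u^S_{\hat\mu^S}=\bar v^S$, which is $(ii)$. The only genuinely delicate point in the whole argument is the derivative identity of the preceding paragraph; everything else is bookkeeping with results already established.
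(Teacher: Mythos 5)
Your proof is correct and, for part (i), takes essentially the same route as the paper: both arguments exploit the affine dependence of the quotients on $\mu$ at a critical point of $\mu^S$. The paper states the pointwise identity $D\mu^S(u)=\frac{p}{A(u)}\,DS_{\lambda,\mu}(u)|_{\lambda=0}$, valid when $\mu^S(u)=\mu$ and $\sigma_S(u)=1$, as something "easy to check by direct calculations", and concludes $DS_{0,\hat{\mu}^S}(\bar{v}^S)=0$ at once from $D\mu^S(\bar{v}^S)=0$. You instead differentiate the identity $F(u,\mu^S(u))\equiv 0$ implicitly to get $D\lambda^S_{\hat{\mu}^S}(\bar{v}^S)=0$ and then pass through Lemma~\ref{CritR}, whose internal identity $D\lambda^S_\mu(u)=\frac{1}{Q(u)}DS_{\lambda,\mu}(u)$ does the same work; this is the same computation organized differently, and your implicit-differentiation formulation is a clean way of making the paper's unstated calculation precise. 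Where you genuinely add something is part (ii): the paper's written proof stops after (i) and never addresses (ii), evidently regarding it as immediate. Your argument supplies exactly the missing step: Proposition~\ref{prop:2} at $\mu=\hat{\mu}^S$ gives $\lambda^S_{\hat{\mu}^S}(u)\le 0$ on $\Sigma_S$, hence $\hat{\lambda}^S_{\hat{\mu}^S}\le 0$, while the reverse bound follows from Lemma~\ref{lem:finite} as you say, or even more directly from the admissible point $\bar{v}^S$ itself, whose quotient value is $0$; so the supremum in \eqref{FFP} equals $0$ and is attained at $\bar{v}^S$. Both parts of your proposal are sound.
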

\begin{proof} 
 First we prove \textbf{(i)}. 
 
If $\bar{v}^S$ is a minimizer of \eqref{eq:hat-mu}, then  
 $D\mu^S(\bar{v}^S)=0$. The assumption $\sigma_S(\bar{v}^S)=1$ implies that $\hat{\mu}^S=\mu^S(\bar{v}^S)=M^S(\bar{v}^S)$. Using this it is easy to check by direct calculations of $D\mu^S(\bar{v}^S)$  that if   $\mu^S(u )=\mu,\  \sigma_S(u)=1$ for $u \in
\Sigma_S$, then 
\begin{equation*}
D\mu^S(u)=\frac{p}{A(u)}DS_{\lambda,\mu}(u)|_{\lambda=0}, 
\end{equation*}
and thus, we get $DS_{0,\hat{\mu}^S}(\bar{v}^S)=0$, that is $\bar{v}^S$ weakly satisfies \eqref{1S} with 	$\lambda=0$ and $\mu= \hat{\mu}^S$.

 We now show \textbf{(ii)}.  If $\bar{v}^S$ is a
minimizer of \eqref{eq:hat-mu}, then the equality
$\hat{\mu}^S=M^S(\bar{v}^S)$ implies that
$\lambda^S_{\hat{\mu}^S}(\bar{v}^S)=0$. By Proposition \ref{prop:2},
$\hat{\lambda}^S_{\hat{\mu}^S}\le 0$, and therefore
$\hat{\lambda}^S_{\hat{\mu}^S}
=\lambda^S_{\hat{\mu}^S}(\bar{v}^S)=0$. 

\end{proof}

%\begin{proposition} \label{prop:monot}
	%The function $\hat{\mu}^S$ is strongly monotone decreasing in $(-\infty,0]$, i.e.
%\[
%\hat{\mu}^{S_1}>\hat{\mu}^{S_2}>0,~~\forall S_1<S_2\leq 0.
%\]
%\end{proposition}
%
%\begin{proof}
%Let $S_2\leq 0$. Take $S_1 \in (-\infty, S_2)$. Then
%$\hat{\mu}^{S_2}=\mu^{S_2}(\bar{v}^{S_2})<\mu^{S_2}(\bar{v}^{S_1})<\mu^{S_1}(\bar{v}^{S_1})=\hat{\mu}^{S_1}$. 
%\end{proof}

\begin{proposition}\label{prop:contMUS}
	The function $(-\infty,0]\ni S \mapsto \hat{\mu}^S$ is continuous and  monotone decreasing. Moreover, $
\hat{\mu}^S \to +\infty~~\mbox{as}~~S \to -\infty$ and $
\hat{\mu}^S \to \hat{\mu}^0:=\hat{\mu}^S|_{S=0}>0$ as $S \to 0$. 
\end{proposition}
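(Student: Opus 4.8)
The plan is to handle the four claims—monotone decrease, continuity, the limit at $-\infty$, and the limit at $0$—separately, exploiting throughout the pointwise formula for $\mu^S$ and the existence of minimizers $\bar v^S$ provided by Lemma~\ref{lem:monot}.

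\emph{Monotonicity.} For fixed $u\in\Sigma\setminus\{0\}$ the map $S\mapsto\mu^S(u)$ is strictly decreasing on $(-\infty,0]$, because $(T(u)-2S)^{1/2}$ strictly decreases while the remaining factors are independent of $S$. Hence, if $S_1<S_2\le0$ and $\bar v^{S_1}$ is a minimizer of $\mu^{S_1}$, then
\[
\hat\mu^{S_2}\le\mu^{S_2}(\bar v^{S_1})<\mu^{S_1}(\bar v^{S_1})=\hat\mu^{S_1},
\]
which yields strict monotone decrease.

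\emph{Continuity on $(-\infty,0)$.} I would first establish the pointwise estimate
\[
|\mu^S(u)-\mu^{S'}(u)|\le\frac{2|S-S'|}{T(u)-2S}\,\mu^S(u),\qquad u\in\Sigma\setminus\{0\},
\]
which follows from $|(T-2S)^{1/2}-(T-2S')^{1/2}|\le2|S-S'|(T-2S)^{-1/2}$ together with the trivial bound $p(T-2S)^{1/2}L^{1/2}/A\le\mu^S(u)$. On a compact interval $[-R,-\varepsilon]\subset(-\infty,0)$ one has $T(u)-2S\ge2\varepsilon$ for every $u$; evaluating at $u=\bar v^S$ and invoking $\hat\mu^S\le\hat\mu^{-R}$ (monotonicity) then gives, after symmetrizing, $|\hat\mu^S-\hat\mu^{S'}|\le\varepsilon^{-1}\hat\mu^{-R}|S-S'|$, i.e.\ local Lipschitz continuity. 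At $S=0$, monotonicity forces the left limit to exist and to be $\ge\hat\mu^0$, while the upper bound $\hat\mu^S\le\mu^S(\bar v^0)\to\mu^0(\bar v^0)=\hat\mu^0$ gives $\limsup_{S\to0^-}\hat\mu^S\le\hat\mu^0$; positivity of $\hat\mu^0$ is Proposition~\ref{prop:posit.bound}.

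\emph{The limit at $-\infty$ is the main obstacle.} The natural lower bound uses the inequality $A\lesssim(TL)^aB^b$ behind Proposition~\ref{prop:posit.bound} to control $A(u)$; but since $T\le T-2S$ for $S\le0$, this estimate allows the factor $2|S|$ to be absorbed into $T$, after which the bound is homogeneous under dilations and amplitude scalings and yields only a constant. The remedy is an interpolation inequality free of $T$: combining \eqref{eq:IMRN} with the H\"older bound $\|u\|_{L^p}\le\|u\|_{L^2}^{1-\eta}\|u\|_{L^q}^{\eta}$ and absorbing the resulting power of $\|u\|_{L^p}$ gives
\[
A(u)\le C\,L(u)^{\frac{q-p}{2(q-1)}}B(u)^{\frac{p-1}{q-1}},\qquad u\in\Sigma\setminus\{0\}.
\]
Since $S\le0$, this produces
\[
\mu^S(u)\ge\frac{p}{C}\,\frac{(2|S|)^{1/2}L(u)^{1/2}+\tfrac1q B(u)}{L(u)^{\frac{q-p}{2(q-1)}}B(u)^{\frac{p-1}{q-1}}},
\]
and a direct minimization of the right-hand side over $L,B>0$ (the exponents are tuned so that the minimum is independent of $L$ and $B$, up to a power of $|S|$) gives
\[
\hat\mu^S\ge c\,|S|^{\frac{q-p}{2(q-1)}}\Tend S{-\infty}+\infty .
\]
I expect the construction and verification of the $T$-free inequality, together with the bookkeeping in this final minimization, to be the only genuinely delicate points; monotonicity, local Lipschitz continuity, and the limit at $0$ are then routine.
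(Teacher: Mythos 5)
Your proposal is correct, and in its two substantive parts it takes a genuinely different route from the paper; the monotonicity chain $\hat\mu^{S_2}\le\mu^{S_2}(\bar v^{S_1})<\mu^{S_1}(\bar v^{S_1})=\hat\mu^{S_1}$ is the same as the paper's, and your squeeze at $S=0$ just makes explicit what the paper dismisses as clear. For continuity, the paper Taylor-expands $(T(\bar v^S)-2(S+\Delta S))^{1/2}$ at the minimizers and needs the remainder to be uniform in $S$, which it obtains by first showing that the minimizer family $(\bar v^{S})_{S\in(a,0]}$ is bounded in $\Sigma$ (rerunning the proof of Lemma~\ref{lem:monot}); your pointwise Lipschitz bound, whose constant at $u=\bar v^S$ is controlled by $\hat\mu^{-R}/\varepsilon$ using only $T\ge 0$, monotonicity, and the trivial estimate $p(T-2S)^{1/2}L^{1/2}/A\le\mu^S(u)$, avoids any uniform boundedness claim and is the lighter argument. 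For the limit $S\to-\infty$, the paper argues by contradiction: assuming $\hat\mu^S\to C<\infty$, it normalizes $L(\bar v^S)=1$ by dilation invariance, combines \eqref{HSineq} and \eqref{eq:IMRN} to show that $\|\bar v^S\|_{L^p}$ stays bounded, and then $(T(\bar v^S)-2S)^{1/2}\to\infty$ forces $\mu^S(\bar v^S)\to\infty$. Your route is direct and quantitative: the $T$-free inequality $A(u)\le C\,L(u)^{\frac{q-p}{2(q-1)}}B(u)^{\frac{p-1}{q-1}}$ does follow from the same two inequalities \eqref{HSineq} and \eqref{eq:IMRN} (the power of $\|u\|_{L^p}$ to be absorbed has exponent $p-\frac{p'(q-p)}{q-2}=\frac{p(p-2)(q-1)}{(p-1)(q-2)}>0$, so absorption is legitimate), it is invariant under both dilations and amplitude scalings, and since $\frac{p-1}{q-1}+\frac{q-p}{q-1}=1$ your two-parameter minimization collapses to the single variable $t=L^{1/2}/B$, yielding exactly $\hat\mu^S\ge c\,|S|^{\frac{q-p}{2(q-1)}}$. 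What your approach buys is an explicit divergence rate and a proof free of normalization and contradiction bookkeeping; what the paper's buys is economy, since it recycles the compactness machinery already set up for Lemma~\ref{lem:monot}. Your diagnosis of why the bound behind Proposition~\ref{prop:posit.bound} cannot work here (the factor $2|S|$ is absorbed into $T$ and scale invariance then caps the bound at a constant) is also accurate. Both proofs are complete.
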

\begin{proof} 
	Let $S<0$, $\Delta S >0$ such that $S+\Delta S\leq
        0$. Proceeding like in the proof of Lemma~\ref{lem:monot}, we
        see that $(\bar{v}^{S})_{S\in (a,0]}$ is 
        bounded in $\Sigma$ for any bounded $a<0$. Using this it is not hard to show that the following Taylor expansion holds  
\[
	\(T(\bar{v}^S)-2(S+\Delta S)\)^{1/2}=\(T(\bar{v}^S)-2S\)^{1/2}
        -\Delta S \(T(\bar{v}^S)-2S\)^{-1/2}+o(S,\Delta S),
\]
where $o(S,\Delta S)/ |\Delta S| \to 0$ as $\Delta S\to 0$ uniformly
in $S \in (a, 0]$. 	 
	Hence,
	\begin{equation*}
		\hat{\mu}^{S+\Delta S}\leq \mu^{S+\Delta S}(\bar{v}^{S})=\hat{\mu}^{S}-\Delta S \frac{p}{A(\bar{v}^S)}\(T(\bar{v}^S)-2S\)^{-1/2}+o(\Delta S),
	\end{equation*}
	and consequently,
\[
	\hat{\mu}^{S+\Delta S}-\hat{\mu}^{S}\leq -\Delta S \frac{p}{A(\bar{v}^S)}\(T(\bar{v}^S)-2S\)^{-1/2}+o(S,\Delta S).
\]
Similarly we have 
\[
-\Delta S \frac{p}{A(\bar{v}^{S+\Delta S})}\(T(\bar{v}^{S+\Delta S})-2(S+\Delta S)\)^{-1/2}+o(S+\Delta S,\Delta S)\leq	\hat{\mu}^{S+\Delta S}-\hat{\mu}^{S}.
\]
	Thus, $	\hat{\mu}^{S+\Delta S}-\hat{\mu}^{S} \to 0$ as  $\Delta S \to 0$.
	
Observe 
$\hat{\mu}^{S_2}=\mu^{S_2}(\bar{v}^{S_2})<\mu^{S_2}(\bar{v}^{S_1})<\mu^{S_1}(\bar{v}^{S_1})=\hat{\mu}^{S_1}$ for any $S_1 <S_2\leq 0$. Thus, $\hat{\mu}^{S}$ is monotone decreasing. Moreover, it is clear that $
\hat{\mu}^S \to \hat{\mu}^0:=\hat{\mu}^S|_{S=0}$ as $S \to 0$. By Proposition \ref{prop:posit.bound}, $\hat{\mu}^S|_{S=0}>0$ . 

Let us show that $
\hat{\mu}^S \to +\infty~~\mbox{as}~~S \to -\infty$. From the monotonicity of  $\mu^{S}(\bar{v}^{S})$ it follows $\mu^{S}(\bar{v}^{S}) \to C$ as $S \to -\infty$ for some $C\in (0,+\infty]$.  Suppose, contrary to our claim, that $C<+\infty$. As above, we may assume $L(\bar{v}^{S})=1$, $S<0$. Then
\begin{equation}
	\label{eq: Sinfty}
	\mu^S(\bar{v}^{S}):=\frac{\(T(\bar{v}^{S})-2S\)^{1/2} +\frac{1}{q}\|\bar{v}^{S}\|_{L^q}^q}{\frac{1}{p}\|\bar{v}^{S}\|_{L^p}^p} \to C<+\infty~~\mbox{as}~S \to -\infty.
\end{equation}
Then 
\begin{align}
  \|\bar{v}^{S}\|_{L^q}^q\lesssim \|\bar{v}^{S}\|_{L^p}^p.\label{eq:b2F}
\end{align} 
In view of \eqref{HSineq} and \eqref{eq:IMRN},
\begin{equation*}
  \|\bar{v}^{S}\|_{L^p}^p\lesssim
  \(\|\bar{v}^{S}\|_{L^p}^{p'}\)^{\frac{q-p}{q-2}}\(\|\bar{v}^{S}\|_{L^q}^q\)^{\frac{p-2}{q-2}} ,
\end{equation*}
hence by \eqref{eq:b2F},
\begin{equation*}
  \(\|\bar{v}^{S}\|_{L^p}^p\)^{\frac{q-1}{p-1}}\lesssim
  \|\bar{v}^{S}\|_{L^q}^q\lesssim \|\bar{v}^{S}\|_{L^p}^p. 
\end{equation*}
Since $2<p<q$, we infer that $(\bar{v}^{S})$ in bounded in $L^p(\R^2)$. However, $(T(\bar{v}^{S})-2S)\to +\infty$ as 
$S \to -\infty$ and thus, \eqref{eq: Sinfty} implies a contradiction.
%First assume that $\bar{v}^{S}$ is bounded in $\Sigma$ as $S\to -\infty$. Then as in the proof of Lemma \ref{lem:monot} it can be shown that there exists a subsequence $\bar{v}^{S_n}$, $S_n \to -\infty$ and a  limit point $\hat{v} \neq 0$ such that  $\bar{v}^{S_n} \rightharpoonup \hat{v}$ in $\Sigma$, $\bar{v}^{S_n} \to \hat{v}$ in $L^\gamma(\mathbb{R}^2)$, $2\le  \gamma<\infty$, $\bar{v}^{S_n} \to \hat{v}$ a.e. on $\mathbb{R}^2$. From \eqref{eq:M^S0} it follows that $L(\bar{v}^{S_n})\to 0$ as $n\to +\infty$.  Then \eqref{eq:IMRN} implies that $\|\bar{v}^{S_n}\|_{L^2} \to 0$, which is a contradiction since $\hat{v} \neq 0$. 
%Let us show that $\bar{v}^{S}$ is bounded in $\Sigma$ as $S\to -\infty$. 
\end{proof}
Proposition \ref{prop:contMUS} yields that the range of the function
$(-\infty,0]\ni S \mapsto \hat{\mu}^S$ coincides with
$[\hat{\mu}^{0},+\infty)$ and thus we have:
\begin{corollary}
	There exists an inverse function $ S(\mu)$ of $\hat{\mu}^S$ so that $\hat{\mu}^{S(\mu)}=\mu$ for any $\mu \in [\hat{\mu}^{0},+\infty)$.
\end{corollary}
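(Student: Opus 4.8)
The plan is to read the corollary off directly from Proposition~\ref{prop:contMUS}, which already supplies every analytic ingredient; what remains is purely the one-variable statement that a continuous strictly monotone function on an interval admits a continuous inverse, together with an accurate bookkeeping of the range. Write $g(S):=\hat{\mu}^S$ for the map $(-\infty,0]\ni S\mapsto \hat{\mu}^S$. Proposition~\ref{prop:contMUS} tells us that $g$ is continuous and strictly monotone decreasing, with $g(S)\to+\infty$ as $S\to-\infty$ and $g(0)=\hat{\mu}^0>0$. These are exactly the hypotheses under which a monotone inverse exists.

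First I would use strict monotonicity to conclude injectivity: if $S_1\neq S_2$ then $g(S_1)\neq g(S_2)$, so a well-defined inverse exists on the image $g\bigl((-\infty,0]\bigr)$. Next I would pin down that image precisely. Since $g$ is continuous on the interval $(-\infty,0]$, its image is an interval by the intermediate value theorem. The value $g(0)=\hat{\mu}^0$ is attained and, being the image of the right endpoint of a strictly decreasing function, is the infimum of $g$; meanwhile $g(S)\to+\infty$ as $S\to-\infty$, so $g$ takes arbitrarily large values but never the value $+\infty$. Hence every $\mu\in[\hat{\mu}^0,+\infty)$ is attained (apply the intermediate value theorem on an interval $[S_0,0]$ with $g(S_0)\ge\mu$), while no value strictly below $\hat{\mu}^0$ is attained, giving $g\bigl((-\infty,0]\bigr)=[\hat{\mu}^0,+\infty)$.

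Therefore $g:(-\infty,0]\to[\hat{\mu}^0,+\infty)$ is a continuous strictly decreasing bijection, and the classical inverse function statement for monotone continuous maps yields a (continuous, strictly decreasing) inverse $S(\mu)$ on $[\hat{\mu}^0,+\infty)$ satisfying $\hat{\mu}^{S(\mu)}=g\bigl(S(\mu)\bigr)=\mu$ for all $\mu\in[\hat{\mu}^0,+\infty)$, as claimed. There is no genuine obstacle: the substantive work is entirely contained in Proposition~\ref{prop:contMUS}. The only point deserving a moment's care is the determination that the range is the half-closed interval $[\hat{\mu}^0,+\infty)$ — in particular that the endpoint $\hat{\mu}^0$ is included, being attained at $S=0$, whereas $+\infty$ is excluded, being only a limit as $S\to-\infty$.
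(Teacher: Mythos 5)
Your proof is correct and follows exactly the paper's route: the paper likewise deduces the corollary directly from Proposition~\ref{prop:contMUS}, noting that continuity, strict monotonicity, the attained value $\hat{\mu}^0$ at $S=0$, and the divergence as $S\to-\infty$ force the range to be $[\hat{\mu}^0,+\infty)$, whence the inverse exists. Your elaboration via the intermediate value theorem simply makes explicit the one-line argument the paper leaves implicit.
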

  
\section{Existence of a ground state}

\begin{proposition}\label{PMon} Assume $\mu>0$, $|S_2-S_1|$
is sufficiently small, and $ S_1<S_2$ such that  $\F^{S_1}_\mu\neq \emptyset$, $\F^{S_2}_\mu\neq \emptyset$. Then 
	\begin{equation}\label{Nerav}
	\frac{S_2-S_1}{Q(\hat{u}^{S_1}_\mu )}\leq\hat{\lambda}_\mu^{S_2} 		- \hat{\lambda}_\mu^{S_1}\leq \frac{S_2-S_1}{Q(\hat{u}^{S_2}_\mu )}, ~~\forall \hat{u}^{S_i}_\mu \in \F^{S_i}_\mu,~i=1,2.
\end{equation}

\end{proposition}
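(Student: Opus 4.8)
The plan is to prove the two-sided bound on the increment $\hat{\lambda}_\mu^{S_2} - \hat{\lambda}_\mu^{S_1}$ by comparing the optimizers at the two action levels and exploiting the monotonicity of $\lambda^S_\mu(u)$ in $S$ for fixed $u$. The key structural fact is that $S \mapsto \lambda^S_\mu(u)$ is, from the explicit formula \eqref{LCN}, a differentiable decreasing function of $S$ on the range where $T(u)>2S$, with derivative
\[
\frac{\partial}{\partial S}\lambda^S_\mu(u) = \frac{1}{Q(u)}\bigl(T(u)-2S\bigr)^{-1/2}L(u)^{1/2}.
\]
Since a fundamental frequency solution $\hat{u}^S_\mu$ satisfies $\sigma_S(\hat{u}^S_\mu)=1$, i.e. $T(\hat{u}^S_\mu)-2S=L(\hat{u}^S_\mu)$ (Corollary~\ref{cor:diffSigma}), this derivative simplifies at the optimizer to $1/Q(\hat{u}^S_\mu)$. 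This is precisely the quantity appearing on both sides of \eqref{Nerav}, which strongly suggests the strategy is a mean-value / comparison argument.

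First I would obtain the upper bound. Using $\hat{u}^{S_2}_\mu \in \F^{S_2}_\mu$ as a test function in the variational problem \eqref{FFP} at level $S_1$, together with the monotonicity $S_1<S_2 \Rightarrow \lambda^{S_1}_\mu(w)\le \lambda^{S_2}_\mu(w)$ already recorded in the proof of Lemma~\ref{lem11}, I get
\[
\hat{\lambda}_\mu^{S_1}\ge \lambda^{S_1}_\mu(\hat{u}^{S_2}_\mu)
= \lambda^{S_2}_\mu(\hat{u}^{S_2}_\mu) - \int_{S_1}^{S_2}\frac{\partial}{\partial S}\lambda^{\tau}_\mu(\hat{u}^{S_2}_\mu)\,d\tau
= \hat{\lambda}_\mu^{S_2} - \int_{S_1}^{S_2}\frac{L(\hat{u}^{S_2}_\mu)^{1/2}}{Q(\hat{u}^{S_2}_\mu)\bigl(T(\hat{u}^{S_2}_\mu)-2\tau\bigr)^{1/2}}\,d\tau.
\]
Rearranging gives $\hat{\lambda}_\mu^{S_2}-\hat{\lambda}_\mu^{S_1}\le \int_{S_1}^{S_2}(\cdots)\,d\tau$. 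The point is then to estimate this integral: at $\tau=S_2$ the integrand equals $1/Q(\hat{u}^{S_2}_\mu)$ by the Pohozaev relation $T-2S_2=L$ at the optimizer, and for $\tau<S_2$ it is smaller since $T-2\tau>T-2S_2$. Hence the integral is at most $(S_2-S_1)/Q(\hat{u}^{S_2}_\mu)$, yielding the right-hand inequality. The lower bound is obtained symmetrically, testing \eqref{FFP} at level $S_2$ with $\hat{u}^{S_1}_\mu$ and noting that along the segment the integrand now stays at least its value at $\tau=S_1$, namely $1/Q(\hat{u}^{S_1}_\mu)$.

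The main obstacle I anticipate is controlling the integrand uniformly over $[S_1,S_2]$ so that the crude monotone bounds above are genuinely valid, and in particular ensuring $T(\hat{u}^{S_i}_\mu)-2\tau$ stays bounded away from zero so that $\lambda^\tau_\mu(\hat{u}^{S_i}_\mu)$ remains well defined and smooth as $\tau$ ranges over $[S_1,S_2]$. This is where the hypothesis that $|S_2-S_1|$ be \emph{sufficiently small} enters: the strict inequality $T(\hat{u}^{S_i}_\mu)>2S_i$ from Corollary~\ref{corINS} guarantees a margin at the endpoints, and smallness of the increment keeps $T(\hat{u}^{S_i}_\mu)-2\tau>0$ throughout the interval, so $\hat{u}^{S_i}_\mu\in\Sigma_\tau$ remains an admissible competitor for every intermediate $\tau$. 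With that uniform non-degeneracy in hand, the monotonicity of the integrand in $\tau$ delivers both comparisons, and the two chosen substitutions (optimizer of $S_2$ against level $S_1$, and optimizer of $S_1$ against level $S_2$) produce exactly the two sides of \eqref{Nerav}.
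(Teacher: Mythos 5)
Your argument is correct, and its skeleton is the one the paper uses: test the optimizer of each action level as a competitor in the variational problem at the other level, with the smallness of $|S_2-S_1|$ entering only to guarantee that $\hat{u}^{S_1}_\mu$ remains admissible at level $S_2$ (i.e.\ $T(\hat{u}^{S_1}_\mu)\ge 2S_2$, using the strict inequality of Corollary~\ref{corINS}); the other direction needs no smallness since $T(\hat{u}^{S_2}_\mu)>2S_2>2S_1$. Where you genuinely diverge is in how the $S$-dependence is handled. You work with the dilation-optimized quotient $\lambda^S_\mu$ of \eqref{LCN}, whose dependence on $S$ is nonlinear, so you must differentiate in $S$, integrate over $[S_1,S_2]$, exploit the monotonicity of the integrand, and invoke the Pohozaev relation $T(\hat{u}^{S_i}_\mu)-2S_i=L(\hat{u}^{S_i}_\mu)$ to evaluate the derivative at the endpoints. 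The paper instead passes to the equivalent formulation \eqref{FFPG} and compares levels through $\Lambda^S_\mu(u)=\bigl(S-H_\mu(u)\bigr)/Q(u)$, which is \emph{affine} in $S$ with slope exactly $1/Q(u)$, so that the identity
\begin{equation*}
\Lambda^{S_1}_\mu(u)=\Lambda^{S_2}_\mu(u)-\frac{S_2-S_1}{Q(u)}
\end{equation*}
turns each side of \eqref{Nerav} into a one-line computation, with no calculus, no monotonicity argument, and no endpoint use of Pohozaev. Your route proves the same inequality under the same hypotheses at greater length, though it does have the merit of explaining, via the convexity of $S\mapsto\lambda^S_\mu(u)$ (its $S$-derivative is increasing), why $Q(\hat{u}^{S_2}_\mu)$ lands on the upper side and $Q(\hat{u}^{S_1}_\mu)$ on the lower side. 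One slip to correct: you call $S\mapsto\lambda^S_\mu(u)$ decreasing, yet the derivative you display is positive; the map is increasing in $S$, consistent with the monotonicity you quote from the proof of Lemma~\ref{lem11}, and your subsequent computations do use the correct sign.
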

%\begin{remark}
  %The above statement suggest that the mass $Q$ is an invariant of the
  %set $\F^S_\mu$. This is indeed the case, as we will see shortly.
%\end{remark}
\begin{proof}  Since $\F^{S_j}_\mu \neq \emptyset$,  $\hat{\lambda}_\mu^{S_j}= \Lambda^{S_j}_\mu(\hat{u}^{S_j}_\mu)$,  $\forall \hat{u}^{S_j}_\mu \in \F^{S_j}_\mu,~ j=1,2$. Furthermore, $T( \hat{u}^{S_2}_\mu)>2S_2>2S_1$.  
Hence,  
\begin{align*}
		\hat{\lambda}_\mu^{S_1}= \Lambda^{S_1}_\mu(\hat{u}^{S_1}_\mu)\geq \Lambda^{S_1}_\mu(\hat{u}^{S_2}_\mu)=
		&\Lambda^{S_2}_\mu(\hat{u}^{S_2}_\mu)-\frac{S_2-S_1}{Q(\hat{u}^{S_2}_\mu) }=\hat{\lambda}_\mu^{S_2}-\frac{S_2-S_1}{Q(\hat{u}^{S_2}_\mu) },
\end{align*}
and  we get the second inequality in \eqref{Nerav}. The proof of the
first one 
may be handled in the same way. We only have to notice that
Corollary~\ref{corINS} implies that $T( \hat{u}^{S_1}_\mu)>2S_2$ if
$|S_2-S_1|$ is sufficiently small.   
\end{proof}

\begin{lemma}\label{MuNdepend}
Let $\mu>\hat{\mu}^{0}$. Then $ \F^S_\mu \neq \emptyset$ for any $S\in (S(\mu), 0]$.
\end{lemma}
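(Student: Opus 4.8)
The plan is to deduce the nonemptiness of $\F^S_\mu$ from the existence of a maximizer of the variational problem \eqref{FFP}, which has already been established in Lemma~\ref{lem1} under the hypothesis $\mu>\hat{\mu}^S$. Thus the whole argument reduces to two things: checking that the inequality $\mu>\hat{\mu}^S$ holds throughout the range $S\in(S(\mu),0]$, and then upgrading the resulting maximizer to a genuine fundamental frequency solution.

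First I would recall that, by Proposition~\ref{prop:contMUS} together with the Corollary immediately following it, the map $(-\infty,0]\ni S\mapsto\hat{\mu}^S$ is continuous and \emph{strictly} decreasing, with range $[\hat{\mu}^0,+\infty)$, so that its inverse $S(\mu)$ is well defined for every $\mu\in[\hat{\mu}^0,+\infty)$ and satisfies $\hat{\mu}^{S(\mu)}=\mu$. Since we assume $\mu>\hat{\mu}^0$, the value $S(\mu)$ lies in $(-\infty,0)$, and the interval $(S(\mu),0]$ is nonempty. Strict monotonicity then gives, for every $S\in(S(\mu),0]$,
\[
\hat{\mu}^S<\hat{\mu}^{S(\mu)}=\mu,
\]
that is, the strict inequality $\mu>\hat{\mu}^S$ required in Lemma~\ref{lem1}.

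Next I would invoke Lemma~\ref{lem1}: for each such $S$ there exists a maximizer $\hat{u}^S_\mu$ of \eqref{FFP} with $\hat{\lambda}^S_\mu=\lambda^S_\mu(\hat{u}^S_\mu)$ and $0<\hat{\lambda}^S_\mu<\infty$. By the invariance of $\lambda^S_\mu$ under spatial dilations I may normalize so that $\sigma_S(\hat{u}^S_\mu)=1$; combined with $T(\hat{u}^S_\mu)-2S>0$ from Corollary~\ref{corINS}, this yields $\lambda^S_\mu(\hat{u}^S_\mu)=\Lambda^S_\mu(\hat{u}^S_\mu)=\hat{\Lambda}^S_\mu$, so that $\hat{u}^S_\mu$ is also a maximizer of \eqref{FFPG}. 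Lemma~\ref{exFFS} then shows that $\hat{u}^S_\mu$ is a fundamental frequency solution of \eqref{1S}, whence $\hat{u}^S_\mu\in\F^S_\mu$ and in particular $\F^S_\mu\neq\emptyset$.

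There is no genuine analytic obstacle here: the substantive work (boundedness and compactness of maximizing sequences, the variational characterization of fundamental frequency solutions, and the continuity and monotonicity of $\hat{\mu}^S$) has already been carried out in Lemmas~\ref{lem:finite}--\ref{exFFS} and Proposition~\ref{prop:contMUS}. The only point demanding care is to make sure the inequality $\mu>\hat{\mu}^S$ is \emph{strict} on the whole half-open interval $(S(\mu),0]$ --- including the endpoint $S=0$, where it reduces to the standing assumption $\mu>\hat{\mu}^0$ --- since Lemma~\ref{lem1} delivers a maximizer only under the strict condition (the borderline case $\mu=\hat{\mu}^S$, attained at $S=S(\mu)$, being the one excluded from the interval and treated separately via Lemma~\ref{lem:CritM}). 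This strictness is exactly what the strict monotonicity of $S\mapsto\hat{\mu}^S$ supplies.
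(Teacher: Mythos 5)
Your proof is correct and follows essentially the same route as the paper's: strict monotonicity of $S\mapsto\hat{\mu}^S$ from Proposition~\ref{prop:contMUS} gives $\mu=\hat{\mu}^{S(\mu)}>\hat{\mu}^S$ for every $S\in(S(\mu),0]$, and Lemma~\ref{lem1} then produces a maximizer, whence $\F^S_\mu\neq\emptyset$. The only difference is cosmetic: you make explicit the passage from a maximizer of \eqref{FFP} to a maximizer of \eqref{FFPG} and then to a fundamental frequency solution via Lemma~\ref{exFFS} (where, note, for $S\le 0$ the inequality $T(\hat{u}^S_\mu)>2S$ is automatic for nonzero $\hat{u}^S_\mu$, so the appeal to Corollary~\ref{corINS} is not needed), a chain the paper leaves implicit in its citation of Lemma~\ref{lem1}.
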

\begin{proof}
Take $S\in (S(\mu), 0]$. From the above properties,  $\hat{\mu}^{S} >0$. Since $S>S(\mu)$, Proposition~\ref{prop:contMUS} implies $ \mu=\hat{\mu}^{S(\mu)}> \hat{\mu}^{S}$. Hence by Lemma \ref{lem1} we have $ \F^S_\mu \neq \emptyset$.	
\end{proof}
Hence we have
\begin{corollary}\label{corContin}
For any $\mu>\hat{\mu}^{0}$, the   function $S\mapsto \hat{\lambda}_\mu^{S}$ is
continuous and strictly monotone increasing  on $ (S(\mu),0]$,
i.e., $\hat{\lambda}_\mu^{S_2} > \hat{\lambda}_\mu^{S_1}$, for any
$S(\mu)< S_1<S_2\leq 0$. 
Furthermore, $Q(\hat{u}^{S_1}_\mu )> Q(\hat{u}^{S_2}_\mu )$, $\forall
\hat{u}^{S_j}_\mu \in \F^{S_j}_\mu,~ j=1,2$.
 %and for all $S\in (S(\mu),0)$, 
%\[Q\( \hat u_\mu^S\) = Q\( \hat v_\mu^S\) ,\quad \forall \hat u_\mu^S,
  %\hat v_\mu^S\in \F^S.\]
\end{corollary}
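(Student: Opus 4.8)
The plan is to derive everything from Proposition~\ref{PMon} and Lemma~\ref{MuNdepend}. First, Lemma~\ref{MuNdepend} guarantees that $\F^S_\mu\neq\emptyset$, hence that $\hat{\lambda}^S_\mu$ is well defined, for every $S\in(S(\mu),0]$; this is what makes it meaningful to speak of monotonicity and continuity on the whole interval. I would fix once and for all a compact subinterval $[a,b]\subset(S(\mu),0]$ and, tracing through the boundedness arguments of Lemmas~\ref{lem11} and~\ref{lem:monot}, record that the maximizers $(\hat{u}^S_\mu)_{S\in[a,b]}$ are bounded in $\Sigma$, and, using the positivity of $\hat{\lambda}^S_\mu$ together with \eqref{eq:IMRN}, that they are bounded away from $0$ in $L^2$ and that $L(\hat{u}^S_\mu)$ is bounded below by some $\ell>0$. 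Thus there are constants $0<c\le C<\infty$ with $Q(\hat{u}^S_\mu)\in[c,C]$ for all $S\in[a,b]$ and all choices of maximizer.

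For strict monotonicity, take $S(\mu)<S_1<S_2\le 0$. When $|S_2-S_1|$ is small enough (relative to $\ell$, so that the feasibility condition $T(\hat{u}^{S_1}_\mu)>2S_2$ of Proposition~\ref{PMon} holds, as in Corollary~\ref{corINS}), the left inequality in \eqref{Nerav} gives
\[
\hat{\lambda}^{S_2}_\mu-\hat{\lambda}^{S_1}_\mu\ge\frac{S_2-S_1}{Q(\hat{u}^{S_1}_\mu)}>0.
\]
For general $S_1<S_2$ in $(S(\mu),0]$ I would subdivide $[S_1,S_2]$ into finitely many subintervals of length below the uniform threshold dictated by $\ell$, apply the above on each, and chain the strict inequalities, Lemma~\ref{MuNdepend} ensuring $\F^{\tau}_\mu\neq\emptyset$ at every intermediate node $\tau$.

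Continuity then follows by squeezing: for $S_1<S_2$ close, \eqref{Nerav} reads
\[
0<\frac{S_2-S_1}{Q(\hat{u}^{S_1}_\mu)}\le\hat{\lambda}^{S_2}_\mu-\hat{\lambda}^{S_1}_\mu\le\frac{S_2-S_1}{Q(\hat{u}^{S_2}_\mu)},
\]
and the uniform bound $Q(\hat{u}^{S_i}_\mu)\ge c>0$ forces both outer terms to $0$ as $|S_2-S_1|\to0$. Finally, for the mass comparison I would rearrange the same two inequalities: they sandwich the common quantity $\frac{S_2-S_1}{\hat{\lambda}^{S_2}_\mu-\hat{\lambda}^{S_1}_\mu}$ between $Q(\hat{u}^{S_2}_\mu)$ and $Q(\hat{u}^{S_1}_\mu)$, yielding $Q(\hat{u}^{S_2}_\mu)\le Q(\hat{u}^{S_1}_\mu)$. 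To upgrade this to a strict inequality, I would argue that equality forces $\hat{u}^{S_2}_\mu$ to also maximize $\Lambda^{S_1}_\mu$ over $\{T-2S_1\ge0\}$ in \eqref{FFPG}, hence by Lemma~\ref{exFFS} to be a fundamental frequency solution at action $S_1$; but Corollary~\ref{cor:diffSigma} would then give $\sigma_{S_1}(\hat{u}^{S_2}_\mu)=1$, whereas the identity $T(\hat{u}^{S_2}_\mu)-2S_2=L(\hat{u}^{S_2}_\mu)$ (again Corollary~\ref{cor:diffSigma}, at action $S_2$) yields $\sigma_{S_1}(\hat{u}^{S_2}_\mu)=\bigl(1+2(S_2-S_1)/L(\hat{u}^{S_2}_\mu)\bigr)^{1/4}>1$, a contradiction.

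The main obstacle I anticipate is the uniform lower bound $Q(\hat{u}^S_\mu)\ge c>0$ (equivalently, uniform positivity of $\|\hat{u}^S_\mu\|_{L^2}$ and of $L(\hat{u}^S_\mu)$) on compact subintervals: the quantitative constants in Lemmas~\ref{lem11} and~\ref{lem:monot} depend on $\hat{\lambda}^S_\mu$, which degenerates to $0$ as $S\to S(\mu)^+$, so some care is needed to keep these bounds uniform away from the endpoint $S(\mu)$ and to fix the uniform subdivision threshold used in the chaining step.
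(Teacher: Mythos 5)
Your overall route---deriving everything from Proposition~\ref{PMon} and Lemma~\ref{MuNdepend}---is the same as the paper's, but the scaffolding you build around it (uniform $\Sigma$-bounds on maximizers, uniform lower bounds $Q(\hat{u}^S_\mu)\ge c$ and $L(\hat{u}^S_\mu)\ge\ell$ on compact subintervals, subdivision of $[S_1,S_2]$ and chaining) is where your proof has a genuine gap, and it is also unnecessary. The gap: your continuity step and your ``uniform subdivision threshold'' rest precisely on those uniform lower bounds, and you never establish them---you only point to the boundedness arguments of Lemmas~\ref{lem11} and~\ref{lem:monot}, whose constants are tied to a fixed $S$ and, as you yourself note, degenerate as $\hat{\lambda}^S_\mu\to 0$; making them uniform in $S$ would require an additional lower-semicontinuity/compactness argument that is absent from your sketch. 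The missing observation that removes all of this: in the corollary one has $S_1<S_2\le 0$, and $T(u)\ge 0\ge 2S_2$ for \emph{every} $u\in\Sigma$ (the paper records this when it observes that $\Sigma_S=\Sigma\setminus\{0\}$ for $S\le 0$). Hence the feasibility condition $T(\hat{u}^{S_1}_\mu)\ge 2S_2$, which is the only place the smallness hypothesis of Proposition~\ref{PMon} is used, holds automatically, and the proof of Proposition~\ref{PMon} yields both inequalities in \eqref{Nerav} for every pair $S(\mu)<S_1<S_2\le 0$ with no smallness restriction. Then strict monotonicity is immediate from the left inequality, $Q(\hat{u}^{S_2}_\mu)\le Q(\hat{u}^{S_1}_\mu)$ follows from the two combined, and continuity (indeed Lipschitz continuity) follows from the right inequality because the $Q$-comparison applied to the pair $(S_2,0)$ supplies the needed lower bound $Q(\hat{u}^{S_2}_\mu)\ge Q(\hat{u}^{0}_\mu)>0$ for free. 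No chaining, no compactness, no uniform bounds.

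By contrast, your equality-case argument for the \emph{strict} inequality $Q(\hat{u}^{S_1}_\mu)>Q(\hat{u}^{S_2}_\mu)$ is correct and is a genuine addition: \eqref{Nerav} alone only gives the non-strict inequality, and upgrading it does require exactly the kind of argument you give---equality forces $\hat{u}^{S_2}_\mu$ to attain the supremum in \eqref{FFPG} at level $S_1$ (it is feasible there since $T(\hat{u}^{S_2}_\mu)>2S_2>2S_1$), hence by Lemma~\ref{exFFS} it solves \eqref{1S} with action $S_1$ as well as with action $S_2$, and Corollary~\ref{cor:diffSigma} then pins $T(\hat{u}^{S_2}_\mu)-L(\hat{u}^{S_2}_\mu)$ to both $2S_1$ and $2S_2$, a contradiction. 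The paper asserts the strict inequality without spelling this out; this part of your proposal fills that hole correctly.
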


\begin{lemma}\label{FFS=GS}
Assume that $\mu>\hat{\mu}^{0}$. Suppose  $S\in  (S(\mu),0]$ such that
$\hat{u}^{S}_\mu$ is a fundamental frequency solution  of \eqref{1S}
with $\lambda=\hat{\lambda}_\mu^S$, then
$\hat{u}_{\lambda,\mu}:=\hat{u}^S_\mu$ 	 is a ground state of
\eqref{1S} with ground level $S$.  
\end{lemma}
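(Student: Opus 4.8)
The plan is to exploit the duality between the fundamental frequency solutions (which maximise $\lambda^S_\mu$ at fixed action $S$) and the ground states (which minimise the action $S_{\lambda,\mu}$ at fixed frequency $\lambda$), the bridge being the strict monotonicity of $S\mapsto\hat\lambda^S_\mu$ established in Corollary~\ref{corContin}. First I would record the two facts attached to the hypothesis. Since $\hat u^S_\mu$ is a fundamental frequency solution with $\lambda=\hat\lambda^S_\mu$, the equivalence $\Lambda^S_\mu(u)=\lambda\Leftrightarrow S_{\lambda,\mu}(u)=S$ gives $S_{\lambda,\mu}(\hat u^S_\mu)=S$. Moreover, as $S>S(\mu)$ and $S\mapsto\hat\mu^S$ is strictly decreasing (Proposition~\ref{prop:contMUS}), we have $\hat\mu^S<\hat\mu^{S(\mu)}=\mu$, so Lemma~\ref{lem:finite} yields the strict positivity $\lambda=\hat\lambda^S_\mu>0$. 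It therefore suffices to show $S\le S_{\lambda,\mu}(w)$ for every nontrivial critical point $w$ of $S_{\lambda,\mu}$: this will simultaneously prove the ground state inequality and identify the ground level as $S$.

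So I would fix $w\in\Sigma\setminus\{0\}$ with $DS_{\lambda,\mu}(w)=0$ and set $S':=S_{\lambda,\mu}(w)$. By Remark~\ref{rem:T(u)>2}, $T(w)>2S'$, so $w\in\Sigma_{S'}$; the equivalence gives $\Lambda^{S'}_\mu(w)=\lambda$, while Corollary~\ref{cor:diffSigma} gives $\sigma_{S'}(w)=1$, whence $\lambda^{S'}_\mu(w)=\Lambda^{S'}_\mu(w)=\lambda$. Since $\hat\lambda^{S'}_\mu$ is the supremum of $\lambda^{S'}_\mu$ over admissible functions and $w$ is admissible, this yields the crucial bound $\lambda\le\hat\lambda^{S'}_\mu$.

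I would then conclude $S\le S'$ by a short case analysis on $S'$. If $S'>0$, then $S\le 0<S'$ trivially. If $S(\mu)<S'\le 0$, then $\hat\lambda^S_\mu=\lambda\le\hat\lambda^{S'}_\mu$, and the strict monotonicity of Corollary~\ref{corContin} forces $S\le S'$. Finally, the remaining range $S'\le S(\mu)$ is excluded: there $\mu=\hat\mu^{S(\mu)}\le\hat\mu^{S'}$ by the monotonicity of $\hat\mu^{\cdot}$, so Proposition~\ref{prop:2} would give $\lambda^{S'}_\mu(w)\le 0$, contradicting $\lambda^{S'}_\mu(w)=\lambda>0$. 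Having $S\le S'=S_{\lambda,\mu}(w)$ for every critical point $w$ establishes that $\hat u^S_\mu$ is a ground state and that $\hat S_{\lambda,\mu}=S$.

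The main obstacle is not the central monotonicity argument, which is a clean Legendre-type duality, but making the case analysis airtight: one must verify that $w$ genuinely lands in $\Sigma_{S'}$ so that $\lambda^{S'}_\mu(w)$ and the extremal value $\hat\lambda^{S'}_\mu$ are meaningful, and one must control the two ``out of range'' regimes $S'>0$ and $S'\le S(\mu)$ where the monotonicity of Corollary~\ref{corContin} is unavailable. The strict positivity $\lambda>0$, coming from $S>S(\mu)$, is precisely what rules out the degenerate regime $S'\le S(\mu)$, so carrying that strict inequality throughout is essential.
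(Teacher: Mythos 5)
Your proof is correct, and its engine is the same as the paper's: a nontrivial critical point $w$ of $S_{\lambda,\mu}$ with action $S'$ satisfies $\sigma_{S'}(w)=1$, $T(w)>2S'$ and $\lambda^{S'}_\mu(w)=\Lambda^{S'}_\mu(w)=\lambda$ (Lemma~\ref{CritR}, Remark~\ref{rem:T(u)>2}), hence $\lambda\le\hat\lambda^{S'}_\mu$, and monotonicity of the fundamental frequency in the action level then forces $S\le S'$. The difference is organizational. The paper argues by contradiction: assuming $S_1:=S_{\lambda,\mu}(w)<S$, it checks $0<\hat\lambda^{S_1}_\mu<+\infty$ (Lemma~\ref{lem:finite}), invokes Lemma~\ref{lem11} to obtain $\F^{S_1}_\mu\neq\emptyset$, and applies Proposition~\ref{PMon} to get $\hat\lambda^{S_1}_\mu<\hat\lambda^{S}_\mu$, contradicting $\hat\lambda^{S_1}_\mu\ge\Lambda^{S_1}_\mu(w)=\hat\lambda^{S}_\mu>0$; since $S_1<S\le 0$ in that setup, no case distinction is needed and the regime $S_1\le S(\mu)$ is absorbed automatically. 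You argue directly, replacing the pair Lemma~\ref{lem11}/Proposition~\ref{PMon} by the already-packaged Corollary~\ref{corContin}, which is only available on $(S(\mu),0]$, so you must dispose of the two out-of-range regimes by hand: $S'>0$ trivially, and $S'\le S(\mu)$ via Proposition~\ref{prop:2} combined with the monotonicity of $S\mapsto\hat\mu^S$ (Proposition~\ref{prop:contMUS}) and the strict positivity $\lambda>0$. What your route buys is that the degenerate regime is excluded by a purely algebraic argument, with no need to run the compactness step (existence of a maximizer at the new level) that the paper's route requires through Lemma~\ref{lem11}; what it costs is the three-case bookkeeping, which you do carry out correctly. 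Both arguments are complete given the paper's preceding results.
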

\begin{proof} 
	Suppose the assertion of the lemma is false. Then there exists a solution $w$ of \eqref{1S} with $\lambda=\hat{\lambda}_\mu^{S}$ such that 
\[
	S_1:=S_{\hat{\lambda}_\mu^{S}, \mu}(w)<S_{\hat{\lambda}_\mu^{S}, \mu}(\hat{u}^{S})=S.
\]
The equality $S_1:=S_{\hat{\lambda}_\mu^{S}, \mu}(w)$ implies
$\Lambda_\mu^{S_1}(w)=\hat{\lambda}_\mu^{S}$.  Moreover, since
$D\Lambda_\mu^{S_1}(w)=0$, Lemma \ref{CritR} (see also Remark
\ref{rem:T(u)>2}) implies that $T(w)-2S_1> 0$. Lemma~\ref{lem:finite} implies that
$\hat{\lambda}_\mu^{S_1}<+\infty$. Furthermore, we have     
\begin{equation}\label{eq:LLineq}
	\hat{\lambda}_\mu^{S_1}=\max_{u \in \Sigma_{S_1}\setminus \{0\} }\Lambda_\mu^{S_1}(u ) \ge \Lambda_\mu^{S_1}(w)= \hat{\lambda}_\mu^{S}>0.
\end{equation}
Thus $0<\hat{\lambda}_\mu^{S_1}<+\infty$, and by Lemma~\ref{lem11}, $\F^{S_1}_\mu\neq \emptyset$. 
Hence, the inequality $S>S_1$ by Proposition~\ref{PMon} implies that $\hat{\lambda}_\mu^{S_1}<\hat{\lambda}_\mu^{S}$, which contradicts  \eqref{eq:LLineq}. 
\end{proof}

\begin{lemma}\label{lem:limit}
Assume that $\mu>\hat{\mu}^{0}$. Then $\lim_{S \to S(\mu)}\hat{\lambda}_\mu^{S} = 0$ and $0<\lim_{S \to 0}\hat{\lambda}_\mu^{S} = \hat{\lambda}_\mu^{*}< \infty $.
\end{lemma}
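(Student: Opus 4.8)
The plan is to treat the two limits separately: the limit at the right endpoint $S=0$ comes essentially for free from continuity, while the limit at the left endpoint $S=S(\mu)$ requires anchoring Proposition~\ref{PMon} at $S(\mu)$.

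First I would dispose of the second limit. Since $\mu>\hat{\mu}^0$ and $\hat{\mu}^0=\hat{\mu}^S|_{S=0}$, the value $S=0$ lies in the half-open interval $(S(\mu),0]$ on which Corollary~\ref{corContin} asserts that $S\mapsto\hat{\lambda}_\mu^S$ is continuous (and strictly increasing). Hence $\lim_{S\to 0}\hat{\lambda}_\mu^S=\hat{\lambda}_\mu^0=:\hat{\lambda}_\mu^{*}$. Moreover $\mu>\hat{\mu}^0=\hat{\mu}^{S}|_{S=0}$, so Lemma~\ref{lem:finite} applied with $S=0$ gives $0<\hat{\lambda}_\mu^0<\infty$, i.e. $0<\hat{\lambda}_\mu^{*}<\infty$.

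For the first limit, the key preliminary observation is that the endpoint $S=S(\mu)$ is itself realized by a fundamental frequency solution with vanishing fundamental frequency. Indeed, by definition of the inverse function we have $\mu=\hat{\mu}^{S(\mu)}$. Hence by Lemma~\ref{lem:CritM} the minimizer $\bar{v}^{S(\mu)}$ of $\mu^{S(\mu)}$, normalized so that $\sigma_S(\bar{v}^{S(\mu)})=1$, is a maximizer of \eqref{FFP} with $\hat{\lambda}^{S(\mu)}_{\hat{\mu}^{S(\mu)}}=\lambda^{S(\mu)}_\mu(\bar{v}^{S(\mu)})=0$; since $\sigma_S(\bar{v}^{S(\mu)})=1$, it is also a maximizer of \eqref{FFPG}, so Lemma~\ref{exFFS} yields $\bar{v}^{S(\mu)}\in\F^{S(\mu)}_\mu$. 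In particular $\hat{\lambda}_\mu^{S(\mu)}=0$ and $\F^{S(\mu)}_\mu\neq\emptyset$. (Alternatively, $\hat{\lambda}_\mu^{S(\mu)}\le 0$ follows at once from Proposition~\ref{prop:2}, since $\mu\le\hat{\mu}^{S(\mu)}$, while $\hat{\lambda}_\mu^{S(\mu)}\ge 0$ from Lemma~\ref{lem:finite}.)

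With this in hand I would invoke Proposition~\ref{PMon} at $S_1=S(\mu)$, $S_2=S$ for $S\in(S(\mu),0]$ close to $S(\mu)$: one has $\F^{S}_\mu\neq\emptyset$ by Lemma~\ref{MuNdepend}, $\F^{S(\mu)}_\mu\neq\emptyset$ by the previous step, and the strict inequality $T(\bar{v}^{S(\mu)})>2S(\mu)$ (Corollary~\ref{corINS}) guarantees the admissibility condition used in Proposition~\ref{PMon} for $|S-S(\mu)|$ small. Using $\hat{\lambda}_\mu^{S(\mu)}=0$, the right-hand inequality in \eqref{Nerav} reads
\[
0<\hat{\lambda}_\mu^S\le\frac{S-S(\mu)}{Q(\hat{u}_\mu^S)}.
\]
Finally, Corollary~\ref{corContin} shows that $Q(\hat{u}_\mu^S)$ increases as $S$ decreases toward $S(\mu)$, so $Q(\hat{u}_\mu^S)\ge Q(\hat{u}_\mu^{S_0})>0$ for any fixed $S_0\in(S(\mu),0]$ and all $S\in(S(\mu),S_0)$; the right-hand side therefore tends to $0$ as $S\to S(\mu)$, forcing $\hat{\lambda}_\mu^S\to 0$. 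The main obstacle is precisely the preliminary step: verifying that at the extremal parameter $\mu=\hat{\mu}^{S(\mu)}$ the maximization problem still possesses a fundamental frequency solution, so that Proposition~\ref{PMon} may legitimately be anchored at the left endpoint. Once this is secured, the monotonicity of $Q$ and the action-derivative bound \eqref{Nerav} close the argument with no further compactness analysis.
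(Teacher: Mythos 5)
Your proposal is correct, and it departs from the paper's own proof in both halves, in an interesting way. For the limit at $S=0$, the paper does \emph{not} invoke the endpoint continuity stated in Corollary~\ref{corContin}: it only uses monotonicity to get existence of the limit in $(0,\infty]$, and then rules out the value $+\infty$ by a compactness argument --- if $\hat{\lambda}_\mu^{S}\to\infty$, inequality \eqref{Nerav} combined with the monotonicity of $Q(\hat{u}^S_\mu)$ forces $Q(\hat{u}^S_\mu)\to 0$, while boundedness of the maximizers in $\Sigma$ and the compact embedding $\Sigma\hookrightarrow L^\gamma(\R^2)$ produce a nonzero limit $\bar{w}$, a contradiction. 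Your alternative --- reading off left-continuity at $S=0$ from Corollary~\ref{corContin} and positivity/finiteness of $\hat{\lambda}_\mu^0$ from Lemma~\ref{lem:finite} --- is legitimate, since $0$ belongs to $(S(\mu),0]$, where $\F^S_\mu\neq\emptyset$ and $\mu>\hat{\mu}^S$, and it is shorter; the paper's route has the merit of not leaning on the endpoint case of the corollary, whose proof is only sketched there. For the limit at $S=S(\mu)$, the situation is reversed: the paper is very terse, citing Lemma~\ref{lem:CritM} for $\hat{\lambda}_\mu^{S(\mu)}=0$ and implicitly using right-continuity at $S(\mu)$, which Corollary~\ref{corContin} does not cover (its interval is open at $S(\mu)$). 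Your anchoring of Proposition~\ref{PMon} at $S_1=S(\mu)$ --- justified because $\F^{S(\mu)}_\mu\neq\emptyset$ via Lemmas~\ref{lem:CritM} and \ref{exFFS} --- together with the lower bound $Q(\hat{u}^S_\mu)\ge Q(\hat{u}^{S_0}_\mu)$ coming from the $Q$-monotonicity, supplies exactly the missing one-sided continuity estimate, so your treatment of that endpoint is actually more complete than the paper's.
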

\begin{proof} Since  $\hat{\mu}^{S(\mu)}=\mu$, Lemma \ref{lem:CritM}  implies that $\lim_{S \to S(\mu)}\hat{\lambda}_\mu^{S}=\hat{\lambda}_\mu^{S(\mu)}=0$.
By the monotonicity of $\hat{\lambda}_\mu^{S}$ it follows that there exists $\lim_{S \to 0}\hat{\lambda}_\mu^{S} = \hat{\lambda}_\mu^{*}\leq \infty  $.		Suppose, contrary to our claim, that  $\hat{\lambda}_\mu^{*}= \infty$. By \eqref{Nerav} and since $Q(\hat{u}^{S_1}_\mu )> Q(\hat{u}^{S_2}_\mu )$, for $S(\mu)< S_1<S_2<0$, this is possible only if  
	$Q(\hat{u}^{S}_\mu ) \to 0~~\mbox{as}~~S \to 0$. 
	An analysis similar to that in the proof of Lemma~\ref{lem:monot} shows that $(\hat{u}^{S}_\mu)_{S\in (\delta,0)}$ is bounded in $\Sigma$ for any $\delta\in (S(\mu),0)$, and thus
	there exists a subsequence $(\hat{u}^{S_n}_\mu)_n$, such that $S_n \to 0$ and
	$\hat{u}^{S_n}_\mu \rightharpoonup \bar{w} ~~\mbox{in} ~ \Sigma$, $\hat{u}^{S_n}_\mu \to \bar{w}$ in	$L^\gamma(\mathbb{R}^2)$, $2\le \gamma<\infty$, $\hat{u}^{S_n}_\mu \to \bar{w}$ a.e. on $\mathbb{R}^2$,	for some $\bar{w} \in \Sigma$. As in proof of Lemma~\ref{lem11}, it follows that $\bar{w}\neq 0$. However, this contradicts  the convergence $Q(\hat{u}^{S_n}_\mu ) \to 0$.
\end{proof}

Corollary~\ref{corContin}  implies that the function $S \mapsto
\hat{\lambda}_\mu^{S}$ is invertible so that for any $\lambda \in
[0, \hat{\lambda}_\mu^{*}]$ there exists a unique
$S_\lambda \in  [S(\mu),0]$ such that
$\hat{\lambda}_\mu^{S_\lambda}=\lambda$.  
 Hence, by Lemmas \ref{FFS=GS}, \ref{lem:limit} we have:
\begin{corollary}\label{EXISTGrS<0}
Assume that $\mu>\hat{\mu}^{0}$. 	Then for any  $\lambda\in [0, \hat{\lambda}_\mu^{*}]$ equation \eqref{1S} has a ground state $\hat{u}_\lambda$.  Moreover, $S_{\lambda,\mu}(\hat{u}_\lambda)<0$ for $\lambda\in [0,
\hat{\lambda}_\mu^{*})$ and $S_{\lambda,\mu}(\hat{u}_\lambda)|_{\lambda=\hat{\lambda}_\mu^{*}}=0$.
\end{corollary}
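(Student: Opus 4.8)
The plan is to invert the fundamental frequency map $S\mapsto \hat{\lambda}_\mu^S$ and transport the already-established existence of fundamental frequency solutions into the existence of ground states at each prescribed $\lambda$. By Corollary~\ref{corContin} this map is continuous and strictly increasing on $(S(\mu),0]$, while Lemma~\ref{lem:limit} pins down its endpoint values: $\hat{\lambda}_\mu^S\to 0$ as $S\to S(\mu)^+$ and $\hat{\lambda}_\mu^S\to\hat{\lambda}_\mu^{*}\in(0,\infty)$ as $S\to 0$. Since moreover $\hat{\lambda}_\mu^{S(\mu)}=0$ by Lemma~\ref{lem:CritM}, the map is a bijection of $[S(\mu),0]$ onto $[0,\hat{\lambda}_\mu^{*}]$, and for each $\lambda\in[0,\hat{\lambda}_\mu^{*}]$ I write $S_\lambda$ for its unique preimage.

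For $\lambda\in(0,\hat{\lambda}_\mu^{*}]$ the argument is direct: here $S_\lambda>S(\mu)$, so the strict monotonicity of $S\mapsto\hat{\mu}^S$ (Proposition~\ref{prop:contMUS}) gives $\hat{\mu}^{S_\lambda}<\hat{\mu}^{S(\mu)}=\mu$, whence Lemmas~\ref{lem1} and~\ref{exFFS} produce a fundamental frequency solution $\hat{u}_\mu^{S_\lambda}$ maximizing \eqref{FFP}. As $\hat{\lambda}_\mu^{S_\lambda}=\lambda>0$, Lemma~\ref{FFS=GS} applies verbatim and shows $\hat{u}_\lambda:=\hat{u}_\mu^{S_\lambda}$ is a ground state of \eqref{1S} with ground level $S_\lambda$. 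The equivalence $\Lambda_\mu^{S_\lambda}(u)=\lambda\Leftrightarrow S_{\lambda,\mu}(u)=S_\lambda$ from Section~\ref{sec:rayleigh} then yields $S_{\lambda,\mu}(\hat{u}_\lambda)=S_\lambda$.

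The endpoint $\lambda=0$ is the delicate case, and I expect it to be the main obstacle: there $S_0=S(\mu)$ lies exactly on the critical curve $\mu=\hat{\mu}^{S(\mu)}$, and the proof of Lemma~\ref{FFS=GS}, which requires $\hat{\lambda}_\mu^S>0$, degenerates. I would instead invoke Lemma~\ref{lem:CritM}, by which the minimizer $\bar{v}^{S(\mu)}$ of \eqref{eq:hat-mu} is a fundamental frequency solution with $\hat{\lambda}_\mu^{S(\mu)}=0$ weakly solving \eqref{1S} at $\lambda=0$, and then establish minimality of the action through the auxiliary quotient $\mu^S$ rather than through $\lambda^S_\mu$. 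Concretely, if some solution $w$ of \eqref{1S} with $\lambda=0$ had $S_1:=S_{0,\mu}(w)<S(\mu)$, then $w\in\Sigma_{S_1}$ (Remark~\ref{rem:T(u)>2}) with $\Lambda_\mu^{S_1}(w)=0$ and $\sigma_{S_1}(w)=1$ (Corollary~\ref{cor:diffSigma}), so $\mu^{S_1}(w)=\mu$ and hence $\hat{\mu}^{S_1}\le\mu$; but $S_1<S(\mu)$ forces $\hat{\mu}^{S_1}>\hat{\mu}^{S(\mu)}=\mu$ by strict monotonicity, a contradiction. Thus $\hat{u}_0:=\bar{v}^{S(\mu)}$ is a ground state with $S_{0,\mu}(\hat{u}_0)=S(\mu)<0$.

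Finally, the sign of the action is read off from the ground level. Since $S_{\lambda,\mu}(\hat{u}_\lambda)=S_\lambda$ and $S_\lambda$ ranges over $[S(\mu),0]$ with $S_\lambda=0$ precisely when $\lambda=\hat{\lambda}_\mu^{*}$, we obtain $S_{\lambda,\mu}(\hat{u}_\lambda)<0$ for $\lambda\in[0,\hat{\lambda}_\mu^{*})$ and $S_{\lambda,\mu}(\hat{u}_\lambda)=0$ at $\lambda=\hat{\lambda}_\mu^{*}$, as claimed.
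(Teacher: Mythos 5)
Your proof is correct, and on the interior range $\lambda\in(0,\hat{\lambda}_\mu^{*}]$ it is exactly the paper's argument: the paper likewise inverts $S\mapsto\hat{\lambda}_\mu^{S}$ using Corollary~\ref{corContin} and Lemma~\ref{lem:limit}, obtains nonemptiness of $\F^{S_\lambda}_\mu$ from Lemma~\ref{lem1} (via Lemma~\ref{MuNdepend}, which is your appeal to Lemmas~\ref{lem1} and~\ref{exFFS}), and concludes with Lemma~\ref{FFS=GS}. Where you genuinely differ is at the endpoint $\lambda=0$, i.e.\ $S_\lambda=S(\mu)$. The paper's proof of the corollary is the one-line deduction ``by Lemmas~\ref{FFS=GS}, \ref{lem:limit}'', but Lemma~\ref{FFS=GS} is stated only for $S\in(S(\mu),0]$, and its proof does use $\hat{\lambda}_\mu^{S}>0$ in an essential way (positivity is what lets it invoke Lemma~\ref{lem11} and Proposition~\ref{PMon} for the level $S_1$), so the closed endpoint is not literally covered by the cited chain. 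You identify this degeneracy and repair it correctly: Lemma~\ref{lem:CritM} furnishes the critical point $\bar{v}^{S(\mu)}$ of $S_{0,\mu}$ with action $S(\mu)<0$, and if some other critical point $w$ of $S_{0,\mu}$ had action $S_1<S(\mu)$, then $\sigma_{S_1}(w)=1$ (Corollary~\ref{cor:diffSigma}), $T(w)>2S_1$ (Remark~\ref{rem:T(u)>2}) and $\Lambda^{S_1}_\mu(w)=0$ would give $\mu^{S_1}(w)=\mu$, hence $\hat{\mu}^{S_1}\le\mu$, contradicting the strict monotonicity $\hat{\mu}^{S_1}>\hat{\mu}^{S(\mu)}=\mu$ established in the proof of Proposition~\ref{prop:contMUS}. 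What your variant buys is a self-contained justification of the case $\lambda=0$ (ground state with negative action $S(\mu)$), a point the paper leaves implicit; the paper's route would need exactly this kind of supplementary minimality argument, or a version of Lemma~\ref{FFS=GS} extended to $\hat{\lambda}_\mu^{S}=0$, to be complete at that endpoint.
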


\begin{lemma}\label{lemGr}
Assume that $\mu>\hat{\mu}^{0}$. 	Suppose  $\lambda\in [0, \hat{\lambda}_\mu^{*}]$ and $\hat{u}_\lambda$ is a ground state   of
\eqref{1S}  with the corresponding ground level $S=S_{\lambda,
  \mu}(\hat{u}_\lambda)\le 0$. Then $\hat{u}_\lambda$ is a fundamental
frequency solution of \eqref{1S}  with fundamental  frequency
$\lambda$. 
\end{lemma}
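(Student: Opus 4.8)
The plan is to verify directly that, with $S=S_{\lambda,\mu}(\hat u_\lambda)$, the ground state $\hat u_\lambda$ satisfies the two defining conditions for membership in $\F^S_\mu$, namely $D\Lambda^S_\mu(\hat u_\lambda)=0$ and $\Lambda^S_\mu(\hat u_\lambda)=\hat\lambda^S_\mu$. The first condition is immediate: since $\hat u_\lambda$ is a ground state it solves \eqref{1S} with frequency $\lambda$, and $S=S_{\lambda,\mu}(\hat u_\lambda)$ means precisely $\Lambda^S_\mu(\hat u_\lambda)=\lambda$; the equivalences recorded after \eqref{eq:alRq} then give $D\Lambda^S_\mu(\hat u_\lambda)=0$. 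By Corollary~\ref{cor:diffSigma} and Lemma~\ref{CritR} one has $\sigma_S(\hat u_\lambda)=1$, hence $\lambda^S_\mu(\hat u_\lambda)=\Lambda^S_\mu(\hat u_\lambda)=\lambda$, and $T(\hat u_\lambda)>2S$ by Remark~\ref{rem:T(u)>2}; in particular the definition \eqref{FFP} yields the easy inequality $\lambda=\lambda^S_\mu(\hat u_\lambda)\le\hat\lambda^S_\mu$. Thus everything reduces to proving the reverse inequality $\lambda\ge\hat\lambda^S_\mu$, i.e. $\Lambda^S_\mu(\hat u_\lambda)=\hat\lambda^S_\mu$.

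The crux is to identify the ground level of \eqref{1S} at frequency $\lambda$. Because $\mu>\hat\mu^0$, Corollary~\ref{EXISTGrS<0} (built on the invertibility of $S\mapsto\hat\lambda^S_\mu$ from Corollary~\ref{corContin} and on Lemma~\ref{FFS=GS}) furnishes the unique $S_\lambda\in[S(\mu),0]$ with $\hat\lambda^{S_\lambda}_\mu=\lambda$, together with a fundamental frequency solution $\hat u^{S_\lambda}_\mu$ that is a ground state of \eqref{1S} at frequency $\lambda$ and whose action equals $S_\lambda$. Since the ground level is by definition the minimum of $S_{\lambda,\mu}$ over all nonzero critical points, this shows that the ground level at frequency $\lambda$ is exactly $S_\lambda$. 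The given $\hat u_\lambda$ is also a ground state at frequency $\lambda$, so its action equals the ground level; hence $S=S_{\lambda,\mu}(\hat u_\lambda)=S_\lambda$, and therefore $\hat\lambda^S_\mu=\hat\lambda^{S_\lambda}_\mu=\lambda$. Combined with the first paragraph, $\Lambda^S_\mu(\hat u_\lambda)=\lambda=\hat\lambda^S_\mu$ and $D\Lambda^S_\mu(\hat u_\lambda)=0$, which is exactly $\hat u_\lambda\in\F^S_\mu$ with fundamental frequency $\lambda$.

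If one prefers an argument in the spirit of Lemma~\ref{FFS=GS}, the reverse inequality can instead be obtained by contradiction: assuming $\lambda<\hat\lambda^S_\mu$ forces $\hat\lambda^S_\mu>0$, whence $\mu>\hat\mu^S$ and $S\in(S(\mu),0]$ by Proposition~\ref{prop:2}; then $\hat\lambda^{S_\lambda}_\mu=\lambda<\hat\lambda^S_\mu$ together with the strict monotonicity of $S\mapsto\hat\lambda^S_\mu$ on $(S(\mu),0]$ (Corollary~\ref{corContin}) yields $S_\lambda<S$, so the ground state $\hat u^{S_\lambda}_\mu$ of \eqref{1S} at frequency $\lambda$ has action $S_\lambda$ strictly below $S$, contradicting the minimality of the ground level $S=S_{\lambda,\mu}(\hat u_\lambda)$. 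The main obstacle in either route is purely bookkeeping: one must be sure that the ground level at frequency $\lambda$ is uniquely pinned down and coincides with the value $S_\lambda$ produced by the construction. This is guaranteed by Corollary~\ref{EXISTGrS<0}, which sits logically before the present statement, so that no circularity arises.
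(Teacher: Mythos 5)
Your proposal is correct, and it in fact contains two arguments: your ``alternative'' route in the last paragraph is essentially the paper's own proof, while your main route is a genuine (if modest) variant. The paper proceeds by contradiction: assuming $\lambda<\hat{\lambda}^S_\mu$, it invokes the invertibility of $S\mapsto\hat{\lambda}^S_\mu$ to produce $S_\lambda$ and a fundamental frequency solution $\hat{u}^{S_\lambda}_\mu$ with $\hat{\lambda}^{S_\lambda}_\mu=\lambda$, then uses the strict monotonicity of Corollary~\ref{corContin} to get $S_\lambda<S$, contradicting minimality of the ground level. Your main route avoids the contradiction altogether: since Lemma~\ref{FFS=GS} (through Corollary~\ref{EXISTGrS<0}) exhibits a ground state at frequency $\lambda$ whose action is $S_\lambda$, and since all ground states at a fixed frequency share the same action (the ground level, by definition), you conclude $S=S_\lambda$ directly and hence $\hat{\lambda}^S_\mu=\hat{\lambda}^{S_\lambda}_\mu=\lambda$. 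Both arguments rest on exactly the same machinery (Corollary~\ref{corContin}, Lemma~\ref{FFS=GS}, Corollary~\ref{EXISTGrS<0}), so neither is more economical, but your direct version makes the logical role of the uniqueness of the ground level explicit, and your spelled-out justification in the contradiction route that $\lambda<\hat{\lambda}^S_\mu$ forces $\hat{\lambda}^S_\mu>0$, hence $\mu>\hat{\mu}^S$ and $S\in(S(\mu),0]$ via Proposition~\ref{prop:2}, is actually a point the paper's proof leaves implicit (monotonicity in Corollary~\ref{corContin} is only stated on $(S(\mu),0]$, so one must check $S$ lies in that interval before applying it). Your first paragraph also carefully records why $\sigma_S(\hat{u}_\lambda)=1$, $T(\hat{u}_\lambda)>2S$, and $\lambda\le\hat{\lambda}^S_\mu$, matching the paper's (terser) opening step.
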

 \begin{proof} 
Suppose that $\lambda\in [0, \hat{\lambda}_\mu^{*}]$ and $\hat{u}_\lambda$ is a ground state   of
\eqref{1S}  with  $S=S_{\lambda, \mu}(\hat{u}_\lambda)$. Then
$\Lambda^{S}_\mu(\hat{u}_\lambda)=\lambda^{S}_\mu(\hat{u}_\lambda)=\lambda$,
and thus $\lambda\leq \hat{\lambda}^S_\mu$. 

Suppose, contrary to our claim, that  $\lambda< \hat{\lambda}^S_\mu$.
It follows from the above that there exists $S_\lambda \in
(S(\mu),0]$ and a fundamental frequency solution
$\hat{u}^{S_\lambda}_\mu$ such that 
 $\lambda=\hat{\lambda}^{S_\lambda}_\mu=\Lambda^{S_\lambda}_\mu(\hat{u}^{S_\lambda}_\mu)$ 
 and $S_{\lambda, \mu}(\hat{u}^{S_{\lambda}}_\mu)=S_\lambda$.   
Since $\hat{\lambda}^{S_\lambda}_\mu=\lambda< \hat{\lambda}^S_\mu$, Corollary~\ref{corContin}
implies that $S_\lambda<S\le 0$, which contradicts the assumption that
$\hat{u}_\lambda$ is a ground state.  
\end{proof}

\section{Proof of Theorems~\ref{theo:main} and \ref{theo:main2}}

\begin{proof}[Proof of Theorem~\ref{theo:main}]
Let $N=2$, $2<p<q<\infty$ and $\mu>\hat{\mu}^0$. 
Take $\hat{\lambda}_\mu^{*} \in (0,+\infty)$ defined by Lemma~\ref{lem:limit}.

Let $\lambda\in [0, \hat{\lambda}_\mu^{*}]$. Then Corollary~\ref{EXISTGrS<0} yields that
Equation~\eqref{1S} has a ground state $\hat{u}_\lambda \in \Sigma$, and thus we have $(1^o)$.  Moreover, $S_{\lambda,\mu}(\hat{u}_\lambda)<0$ for $\lambda\in [0,
\hat{\lambda}_\mu^{*})$ and
$S_{\lambda,\mu}(\hat{u}_\lambda)|_{\lambda=\hat{\lambda}_\mu^{*}}=0$,
and thus $(2^o)$ holds. 

By Lemma \ref{lemGr}, $\hat{u}_\lambda$ is a fundamental frequency solution of \eqref{1S}  with fundamental  frequency
$\lambda$. This  by Corollaries~\ref{corINS} implies
$(3^o)$. Moreover, we have $\F_{\mu}^S=\G_{\lambda,\mu}$, and thus, by
Lemma \ref{lemOrbStabFFS} we obtain assertion $(4^o)$.
 
\end{proof}
\begin{proof}[Proof of Theorem~\ref{theo:main2}]
Let $N=2$, $2<p<q<\infty$, $S \in \R$.
Proposition \ref{prop:2} implies that the value $\hat{\mu}^S\geq 0 $ given by \eqref{eq:hat-mu} is an extremal value. 
By Lemmas~\ref{lem1} and \ref{exFFS}, it follows that
Equation~\eqref{1S} has a fundamental frequency solution
$\hat{u}^S_\mu \in \Sigma \setminus \{0\}$ for any $\mu >
\hat{\mu}^S$. 

In addition, if $\mu =
\hat{\mu}^S$, the existence of the fundamental frequency solution
$\hat{u}^S_\mu \in \Sigma \setminus \{0\}$ follows from 
Lemma~\ref{lem:CritM}, \textbf{(ii)}. 

Thus, assertion $(1^o)$ is satisfied. Assertion
$(2^o)$ follows from
Corollary~\ref{corINS}. 

If $S\le 0$, then by Proposition \ref{prop:posit.bound}, $\hat{\mu}^S> 0$.   Take  $\mu \ge  \hat{\mu}^S$, then Proposition \ref{prop:contMUS} implies that $S\in (S(\mu), 0]$, and  $\mu \geq  \hat{\mu}^0$. Hence Lemma \ref{FFS=GS} yields that the fundamental frequency solution  $\hat{u}^S_\mu$ is a ground state of \eqref{1S} with $\lambda= \Lambda^S_\mu(\hat{u}^S_\mu)=\hat{\lambda}_\mu^{S}$. Moreover, by the monotonicity of $\hat{\lambda}_\mu^{S}$ and Lemma \ref{lem:limit} it follows $\lambda\in [0,
\hat{\lambda}_\mu^{*})$. Thus we obtain $(3^o)$.

Since $(1^o)$, $\F^S_\mu\not=\emptyset$ for $S\leq 0$, 
 $\mu \geq \hat{\mu}^S$,  
and thus, Lemma~\ref{lemOrbStabFFS} yields that the set of fundamental frequency solutions $\F^S_\mu$  is
 orbitally stable.
\end{proof}

%%%%%%%%%%%%%%%%%%%%%%%%%%%%%%%%%%%%%%%%%%%%%%%%%%%%%%%%%
		
\appendix

\section{Proof of Proposition~\ref{prop:constrained}}
\label{sec:constrained}

We resume  the approach introduced in \cite{CGM78} (see also
\cite{BL83a}), and, more precisely, the proof of \cite[Proposition~3.1]{CaDPDE}. 
  In all cases, we consider the constraint $u\in B_p$, where
  \begin{equation*}
  B_p:=\left\{ \phi\in \Sigma,\quad
    \frac{1}{p}\|\phi\|_{L^{p}(\R^N)}^{p}=1\right\}.
\end{equation*}
For the homogeneous nonlinearity, we introduce the functional
\begin{equation*}
  F(u) := \frac{1}{2}\<Hu ,u\>+\frac{\lambda}{2}
   \| u\|_{L^2}^2,\quad  H=-\Delta+|x|^2,
 \end{equation*}
 and
 \begin{equation*}
   \delta = \inf_{u\in B_p}F(u).
 \end{equation*}
We know the spectrum of $H$, $\sigma_p(H) = N+2\N$, and that Hermite
functions provide an $L^2$-eigenbasis; see e.g. \cite{LL}. 
The uncertainty principle also reads
\begin{equation*}
  \<H\phi,\phi\>\ge N\|\phi\|_{L^2}^2,\quad\forall \phi\in \Sigma. 
\end{equation*}
{\bf First case: $\lambda>-N$.} We show that $0<\delta<\infty$. The
finiteness is obvious, and the uncertainty principle implies, for all
$u\in \Sigma$,
\begin{equation}\label{eq:1651}
  F(u) \ge \frac{1}{2}(N+\lambda)\|u\|_{L^2}^2>0.  
\end{equation}
If we had $\delta=0$, then there would exist a minimizing sequence
$u_n\in B_p$ such that $u_n\to 0$ in $L^2$. The definition of
$F$ implies that $u_n\to 0$ in $\Sigma$ too. Our assumption on
$p$ implies $\Sigma\subset H^1(\R^N)\hookrightarrow
L^{p}(\R^N)$, and thus $u_n$ leaves $B_p$ for $n$
sufficiently large, leading to a contradiction: $\delta>0$.

In view of \eqref{eq:1651}, and minimizing sequence is bounded in
$L^2$, and again from the definition of $F$, it is bounded in
$\Sigma$. The Banach–Alaoglu theorem implies that up to a subsequence,
$u_n$ converges weakly to some function $\tilde u\in \Sigma$. The presence of the harmonic potential makes the embedding
$\Sigma \hookrightarrow L^r(\R^N)$ compact for all $2\le
r<2^*$ (see e.g. \cite[Theorem~XIII.67]{ReedSimon4}):
$u_n\to \tilde u$ strongly in $L^2\cap L^{p}$, and $\tilde u\in B_p$
solves
\begin{equation*}
  (H+\lambda)\tilde u = \nu |\tilde u|^{p-2}\tilde u,
\end{equation*}
for some Lagrange multiplier $\nu\in \R$. Since $\delta>0$, taking the
inner product with $\tilde u$ in the above equation shows that
$\nu>0$. The function $u= \nu^{1/(p-2)}\tilde u\in
\Sigma\setminus\{0\}$ then solves 
\eqref{eq:homodefoc}. 
\smallbreak

\noindent {\bf Second case: $\lambda<-N$.} For any $c$, $u_c(x) =c
e^{-|x|^2/2}$ satisfies $Hu_c = Nu_c$, so picking $c>0$ so that
$u_c\in B_p$ shows that $\delta<0$.
\smallbreak

Suppose $\delta=-\infty$:
there would exists a 
sequence $(u_n)_n$ in $B_p$  such that $F(u_n)\to -\infty$. As
$F(u)\ge \frac{\lambda}{2}\|u\|_{L^2}^2$, $(u_n)_n$ is unbounded in
$L^2(\R^N)$.
We remark that $(x u_n)_{n\in \N}$
  and $(\nabla u_n)_{n\in \N}$ are also unbounded in $L^2(\R^N)$,
  with norms of the same order as $\|u_n\|_{L^2}$. Indeed, if we had
  $\|\nabla u_n\|_{L^2}\gg \|u_n\|_{L^2}$ and/or
  $\|xu_n\|_{L^2}\gg \|u_n\|_{L^2}$, then we would have $F(u_n)\ge 0$
  for $n$ sufficiently large. In view of \eqref{eq:uncertainty}, the
  three terms $\|\nabla u_n\|_{L^2}$, $\|xu_n\|_{L^2}$ and
  $\|u_n\|_{L^2}$ go to infinity with the same order of magnitude. 
  Set
\begin{equation*}
    \tilde u_n = \frac{1}{\|u_n\|_{L^2}}u_n.
  \end{equation*}
This is a bounded sequence in $\Sigma$, whose $L^2$ norm is equal to
one. Up to extracting a subsequence, $\tilde u_n$ converges weakly in
$\Sigma$, and strongly in
$L^2\cap L^{p}$ (as we have seen in the first case), to some $\tilde u\in  
B_p$ such that $\|\tilde u\|_{L^2}=1$. We infer
\begin{equation*}
  \|u_n \|_{L^{p}} = \|u_n
  \|_{L^{2}} \|\tilde u_n \|_{L^{p}}\approx \|u_n
  \|_{L^{2}}\to +\infty. 
\end{equation*}
Therefore, $u_n$ cannot remain in $B_p$, hence the finiteness of
$\delta$.
\smallbreak

Knowing that $-\infty<\delta<0$, we infer that any minimizing sequence
is bounded in $\Sigma$, and we conclude like in the first case. The
only difference is that now the Lagrange multiplier $\nu<0$, and we
set $u=|\nu|^{1/(p-2)}\tilde u$. 

\bigbreak

\noindent{\bf Third case: combined nonlinearities, $\lambda>-N$.} The
proof in the inhomogeneous case follows the same lines as the first case. We now consider
  \begin{equation*}
  G(u) := \frac{1}{2}\<Hu ,u\>-\frac{\lambda}{2}
   \|u\|_{L^2}^2+\frac{1}{q}\|u\|_{L^q}^q,
 \end{equation*}
 and the minimization problem
 \begin{equation*}
  \delta=\inf_{u\in B_p}G(u).
\end{equation*}
The uncertainty principle yields
\begin{equation*}
  G(u)\ge \frac{1}{2}(N+\lambda)\|u\|_{L^2}^2 + \frac{1}{q}\|u\|_{L^q}^q,
\end{equation*}
and it is easy to check that the proof of the first case  can be
repeated, up to the final 
homogeneity argument: there exists a Lagrange multiplier $\mu$, which
is positive, but we have essentially no information regarding its
value.

\subsection*{Acknowledgements} The authors wish to thank the referees for 
their careful reading of the paper and their suggestions.

\bibliographystyle{abbrv}
\bibliography{biblio}

\end{document}